\newtheorem{theorem}{Theorem}[section]
\newtheorem{thm}[theorem]{Theorem}
\newtheorem{cor}[theorem]{Corollary}
\newtheorem{lem}[theorem]{Lemma}
\newtheorem{prop}[theorem]{Proposition}
\theoremstyle{definition}
\newtheorem{defn}[theorem]{Definition}
\newtheorem{rem}[theorem]{Remark}
\newtheorem{ex}[theorem]{Example}
\newtheorem{conj}[theorem]{Conjecture}
\theoremstyle{remark}
\newcommand{\mbb}{\mathbb}
\newcommand{\QQ}{\mbb{Q}}
\newcommand{\ZZ}{\mbb{Z}}
\newcommand{\PP}{\mbb{P}}
\newcommand{\mc}{\mathcal}
\newcommand{\mcX}{\mc{X}}
\newcommand{\OO}{\mc{O}}
\newcommand{\SP}{\text{Spec }}
\newsavebox{\sembox}
\newlength{\semwidth}
\newlength{\boxwidth}
\newcommand{\Sem}[1]{%
\sbox{\sembox}{\ensuremath{#1}}%
\settowidth{\semwidth}{\usebox{\sembox}}%
\sbox{\sembox}{\ensuremath{\left[\usebox{\sembox}\right]}}%
\settowidth{\boxwidth}{\usebox{\sembox}}%
\addtolength{\boxwidth}{-\semwidth}%
\left[\hspace{-0.3\boxwidth}%
\usebox{\sembox}%
\hspace{-0.3\boxwidth}\right]%
}
\newsavebox{\semrbox}
\newlength{\semrwidth}
\newlength{\boxrwidth}
\newcommand{\Semr}[1]{%
\sbox{\semrbox}{\ensuremath{#1}}%
\settowidth{\semrwidth}{\usebox{\semrbox}}%
\sbox{\semrbox}{\ensuremath{\left(\usebox{\semrbox}\right)}}%
\settowidth{\boxrwidth}{\usebox{\semrbox}}%
\addtolength{\boxrwidth}{-\semrwidth}%
\left(\hspace{-0.3\boxrwidth}%
\usebox{\semrbox}%
\hspace{-0.3\boxrwidth}\right)%
}
\title[Zero cycles on rationally connected varieties]
{Zero cycles on rationally connected varieties over Laurent fields}
\author[Tian]{Zhiyu Tian}
\address{
Beijing International Center for Mathematical Research\\
Peking University\\
100871, Beijing, China}
\email{zhiyutian@bicmr.pku.edu.cn}
\date{\today}
\begin{document}


\begin{abstract}
We study zero cycles on rationally connected varieties defined over characteristic zero Laurent fields with algebraically closed residue fields. We show that the degree map induces an isomorphism for rationally connected threefolds defined over such fields. In general, the degree map is an isomorphism if  rationally connected varieties defined over algebraically closed fields of characteristic zero satisfy the integral Hodge/Tate conjecture for one cycles, or if the Tate conjecture is true for divisor classes on surfaces defined over finite fields. To prove these results, we introduce techniques from the minimal model program to study the homology of certain complexes defined by Kato/Bloch-Ogus.
\end{abstract}


\maketitle


\section{Introduction}
A variety $X$ is called \emph{rationally connected} if for any two general geometric point $x, y \in X(\Omega)$, there is a morphism $f: \PP^1_\Omega \to X_\Omega$ such that $f(0)=x, f(\infty)=y$.
 It follows from this definition that the degree map for the Chow group of zero cycles on a smooth projective rationally connected variety defined over an algebraically closed field induces an isomorphism to $\ZZ$. 
 But for rationally connected variety defined over other fields, this group remains mysterious, except that the kernal of the degree map is a torsion group of some exponent $N$ (depending on $X$).

 For rationally connected varieties defined over an arithmetically interesting field, one could like to understand this kernel.
 As an application of higher dimensional class field theory of Kato and Saito \cite{KatoSaito_CFT}, we know the kernel is trivial for separably rationally connected varieties over finite fields.
 This result is also a consequence of Koll\'ar-Szab\'o's study of fundamental groups of separably rationally connected varieties \cite{KollarSzabo}.
 For surfaces, since $0$-cycles are codimension $2$ cycles, one can use K-theory to study them (See \cite{CT_K_cycle} for an account).
 This approach shows that over a field of cohomological dimension $1$, the kernel is trivial (\cite{CTChow}).
 Building on work of Saito-Sato \cite{SaitoSato_0_cycle}, Esnault and Wittenberg \cite{WittenbergEsnault_0_cycle} studied the kernel of the cycle class map for Chow group of zero cycles to the \'etale cohomology for smooth projective varieties defined over a henselian discrete valuation field with finite or separably closed residue field.
 In particular, they proved that the cycle class map is injective for rational surfaces over such fields.
We refer the readers to the surveys \cite{CT_RCSurvey,WittenbergRCinArith} for a discussion of this question (among others).

 In this article, we introduce techniques from the minimal model program (MMP) to study this question. The main theorem is the following:
\begin{thm}\label{thm:Laurant}
Let $k$ be an algebraically closed field of characteristic $0$ and let $X$ be a smooth projective rationally connected variety defined over $k\Semr{t}$. 
Assume that one of the following holds.
\begin{enumerate}
\item $\dim X \leq 3$.
\item $\dim X=d$ and the cycle class map $$CH_1(Y)/n \to H^{2d-2}_\text{\'et}(Y, \ZZ/n\ZZ(d-1))$$ is surjective for any smooth projective rationally connected variety $Y$ defined over $k$ of dimension at most $d$.
\item The Tate conjecture (Conjecture \ref{conj:Tate}) holds for all surfaces defined over a finite field.
\end{enumerate}
Then the degree map induces an isomorphism $$\deg: CH_0(X)\cong \ZZ.$$
\end{thm}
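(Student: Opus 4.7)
The plan is to reduce the injectivity of the degree map on $CH_0(X)$ to the vanishing of a specific piece of the homology of a Kato/Bloch--Ogus complex on the special fibre of a good integral model of $X$, and then to use the relative minimal model program over $k[[t]]$ to reduce that vanishing to the inputs listed in (1)--(3).

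Since the kernel $F$ of $\deg:CH_0(X)\to\mathbb{Z}$ is torsion of some finite exponent, it suffices to prove $F/n=0$ for every $n\geq 1$. By Hironaka resolution I would pick a regular proper model $\mathcal{X}\to\mathrm{Spec}\,k[[t]]$ whose special fibre $Y=\bigcup Y_i$ is a strict normal crossings divisor, with generic fibre $X$. Following the framework of Saito--Sato, together with the Esnault--Wittenberg analysis of the cycle class map over a henselian DVR with separably closed residue field, I would identify the kernel (and control the cokernel) of $CH_0(X)/n\to H^{2d}_\text{\'et}(X,\mathbb{Z}/n(d))$ with an explicit piece of the Kato homology $KH_{*}(Y,\mathbb{Z}/n)$ of the central fibre, via a Bloch--Ogus type coniveau spectral sequence on $\mathcal{X}$ together with absolute purity along $Y$.

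The new ingredient is that this piece of Kato homology behaves well under the elementary operations of the MMP applied to $\mathcal{X}$ relatively over $\mathrm{Spec}\,k[[t]]$: divisorial contractions and flips have exceptional loci that are uniruled (in fact rationally chain connected) over $k$, so their cohomology in the relevant weight range is controlled, and the piece of $KH_{*}(Y,\mathbb{Z}/n)$ appearing above is invariant. Because the generic fibre is rationally connected, a relative MMP on $\mathcal{X}$ reduces, after finitely many steps, to a model whose central fibre is a normal crossings configuration all of whose strata are rationally connected over $k$; in the case $\dim X=3$ a careful run of the MMP produces a central fibre whose components are smooth rational surfaces glued along trees of smooth rational curves. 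Verifying this MMP-invariance of Kato homology, and controlling the terminal/canonical singularities that arise on intermediate models, is what I expect to be the main technical obstacle.

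With this reduction in hand, the vanishing of the relevant piece of $KH_{*}(Y,\mathbb{Z}/n)$ is assembled through the Mayer--Vietoris type spectral sequence attached to the stratification of $Y$, whose $E_{1}$ inputs are cycle class maps on the smooth components and strata. Two inputs are then needed: surjectivity of $CH_1(Y_i)/n\to H^{2\dim Y_i}_\text{\'et}(Y_i,\mathbb{Z}/n(\dim Y_i-1))$ for each component, which is automatic for smooth rational surfaces (covering case (1)) and is precisely hypothesis (2) in higher dimension; and, for case (3), surjectivity of the zero-cycle cycle class map for smooth projective surfaces over $k$, which via spreading out and specialization to a finite field is implied by the Tate conjecture for divisors on surfaces. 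Combining these gives $F/n=0$ for all $n$, hence $F=0$ and the degree map is an isomorphism.
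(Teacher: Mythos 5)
Your starting point — reduce $\deg\colon CH_0(X)\to\ZZ$ mod $N$, via Saito--Sato and Esnault--Wittenberg, to the surjectivity of the one-cycle class map (equivalently, to the vanishing of $KH_2$) on the reduced special fibre of an SNC model, and then exploit MMP-invariance of Kato homology — is indeed the paper's strategy. There are, however, substantial gaps in how the MMP step is carried out. To begin with, the MMP is not available over $\SP k[[t]]$; the paper instead invokes Greenberg approximation to replace the formal model by a flat projective family over a pointed affine $k$-curve with smooth total space and SNC central fibre, and then runs MMP in the geometric setting over $k$. More importantly, a relative $(K_{\mcX}+(\mcX_b)_{\text{red}})$-MMP over the base curve produces only a dlt pair, not a smooth model with SNC special fibre: its strata are lc centres (normal, usually singular), and — crucially — the components of the special fibre are not themselves rationally connected varieties, so hypotheses (1) and (2) do not apply to them directly. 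What the MMP delivers, after the Mori fibre space contraction over the curve, is that the strata of the new central fibre are \emph{rationally connected fibrations} over the corresponding strata of the base with connected fibres (Koll\'ar--Shokurov connectedness). To transport Kato homology across (a) MMP steps, (b) klt/dlt singularities, and (c) these rationally connected fibrations, the paper develops the resolution property $\textbf{R}(n,k)$, the fibration property $\textbf{F}(n,k)$, and the degeneration property $\textbf{D}(n,k)$, and proves an interleaved induction among these and $\textbf{S}(n,k)$, $\textbf{RC}(n,k)$ (Theorem \ref{thm:induction}). The resolution property for klt pairs is itself a genuine theorem requiring Koll\'ar components (Lemma \ref{lem:kollar_comp}); without this apparatus your ``Mayer--Vietoris assembly'' does not close, because knowing $KH_2$ vanishes on each stratum only kills the $E^\infty_{0,2}$ contribution, and one still has to control $E^\infty_{1,1}$ and $E^\infty_{2,0}$ in the spectral sequence for the SNC divisor — which is exactly what $\textbf{D}$ achieves by identifying $KH((\mcX_b)_{\text{red}})$ with $KH$ of a point.

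A smaller but genuine misstatement: for case (3) the input is not surjectivity of the zero-cycle class map on surfaces. The correct chain is Schoen's theorem — the $\QQ_\ell$-Tate conjecture for divisors on surfaces over finite fields implies the \emph{integral} Tate conjecture for one-cycles on all smooth projective varieties over $\overline{\mbb{F}}_q$ — lifted to characteristic-$0$ rationally connected varieties by Voisin's deformation argument (the paper's Theorem \ref{thm:Voisin}), which then reduces case (3) to case (2).
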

\begin{rem}
We have to restrict ourselves to characteristic $0$ Laurent fields because one of the essential ingredients in the proof is the minimal model program (MMP), which is not sufficiently developed in positive and mixed characteristics in higher dimensions. However, it is expected that MMP should also work in these cases. Thus our result suggests that the same conclusion should hold for separably rationally connected varieties defined over the fraction field of a henselian local ring whose residue field is an algebraic closure of a finite field.
\end{rem}

Next we explain the strategy for the proof of Theorem \ref{thm:Laurant} and some intermediate result that should be of independent interest. 
By the result of Saito-Sato \cite{SaitoSato_0_cycle}, a sufficient condition for the vanishing of the kernel of the degree map is the following (for a complete description of the kernel, see \cite[Theorem 2.1, 2.2]{WittenbergEsnault_0_cycle}):

\begin{thm}\label{thm:IHC_degeneration}
Let $X$ a smooth projective rationally connected variety defined over $k\Semr{t}$, and let $\mcX \to \SP k \Sem{t}$ be a projective regular model of $X$ with simple normal crossing central fiber $\mcX_0$. Assume one of the followings holds.
\begin{enumerate}
\item $\dim X \leq 3$.
\item $\dim X=d$ and the cycle class map $$CH_1(Y)/n \to H^{2d-2}_\text{\'et}(Y, \ZZ/n\ZZ(d-1))$$ is surjective for any smooth projective rationally connected variety $Y$ defined over $k$ of dimension at most $d$.
\item The Tate conjecture (Conjecture \ref{conj:Tate}) holds for all surfaces defined over a finite field.
\end{enumerate}
Then $CH_1((\mcX_0)_{\text{red}})/n \to H_2((\mcX_0)_{\text{red}}, \ZZ/n\ZZ(-1))$ is surjective. Here $H_2(\bullet, \ZZ/n\ZZ)$ is the \'etale Borel-Moore homology.
\end{thm}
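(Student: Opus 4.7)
My plan is to combine the minimal model program over $\SP k\Sem{t}$ with an analysis of the Kato/Bloch--Ogus complex on the central fibre, so that the desired surjectivity on $(\mcX_0)_\text{red}$ reduces to the corresponding surjectivity on the smaller rationally connected strata.

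First, I would replace $\mcX$ by a more convenient birational regular model over $\SP k\Sem{t}$ whose central fibre is SNC and every stratum is a smooth projective \emph{rationally connected} variety over $k$. The tool is MMP in characteristic zero together with dlt modifications and resolution of singularities; the essential geometric input is that the log canonical centres of a dlt degeneration of a rationally connected variety are themselves rationally connected (de Fernex--Kollár--Xu, Hacon--Xu). A piece of bookkeeping, using proper pushforward of $1$-cycles and functoriality of the cycle class map in étale Borel--Moore homology, shows that the statement to be proved is invariant under proper birational modifications of regular SNC models, which legitimises the replacement.

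Next, write $Y = (\mcX_0)_\text{red} = \bigcup_i D_i$ with $D_I = \bigcap_{i\in I} D_i$. The Bloch--Ogus/Gersten complex presents $CH_1(Y)/n$ as a cokernel in terms of Milnor $K$-sheaves on the strata $D_I$, while the target $H_2(Y, \ZZ/n\ZZ(-1))$ carries a Deligne-style weight spectral sequence with $E_1$-terms built from étale (co)homology of the $D_I$. The cycle class map is a morphism between the two. In this comparison, the top-dimensional strata $D_i$ contribute $1$-cycle classes on rationally connected varieties of dimension $\dim X$, which are algebraic by hypothesis (2); the codimension-one strata $D_{ij}$ contribute divisor-type classes on rationally connected varieties, which are algebraic by the integral Hodge theorem for divisors (Lefschetz $(1,1)$, using that $H^1$ and $H^{0,2}$ vanish on rationally connected varieties); and the lower strata contribute only via $0$-cycles, which is trivial. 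A diagram chase through the spectral sequence then yields the surjectivity on $Y$.

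Cases (1) and (3) both reduce to case (2): in dimension $\leq 3$ the hypothesis of (2) is Voisin's integral Hodge theorem for $1$-cycles on rationally connected threefolds; and the Tate conjecture for divisors on surfaces over finite fields is known to imply, via specialization from $k$ and the decomposition of the diagonal on rationally connected varieties, the surjectivity hypothesis of (2) in arbitrary dimension. I expect the main obstacle to be Step 1, that is, producing an SNC model all of whose strata are rationally connected, and proving that the target surjectivity really is birationally invariant among regular models over $\SP k\Sem{t}$; a secondary technical point will be the careful identification, in the weight spectral sequence, of exactly which $E_1$-contributions to $H_2(Y, \ZZ/n\ZZ(-1))$ must be shown to be algebraic and which vanish for dimension or weight reasons.
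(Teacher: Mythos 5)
The proposal has a genuine gap at its first step. You claim that, after a dlt modification, every stratum of the SNC central fibre can be made a smooth projective \emph{rationally connected} variety, citing de Fernex--Koll\'ar--Xu and Hacon--Xu. Neither reference gives this. De Fernex--Koll\'ar--Xu prove that the dual complex is simply connected and describe lc centres via \emph{fibre spaces with rationally connected general fibre}; the Hacon--McKernan-type results give rational connectedness of fibres of $(K+\Delta)$-negative contractions. For a degeneration over a curve the pair $(\mcX,\mcX_0)$ is log Calabi--Yau over the base, not log Fano, and the lc centres are themselves log Calabi--Yau pairs, which need not be rationally connected. (For example, blowing up a smooth model along a curve of positive genus contained in the central fibre produces an SNC model with a non-rationally-connected ruled component whose double curve has positive genus; one cannot always undo such features by a single blow-down.) What the structure theory actually provides, after a relative MMP, is only that each stratum of $\mcX_0$ maps to the corresponding stratum of the base by a fibre space with rationally connected general fibre. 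That is strictly weaker than the strata being rationally connected, and it is exactly the gap the paper's inductive machinery is built to bridge: one proves Kato-homology invariance under rationally connected fibrations and klt resolutions (the properties $\textbf{R}$, $\textbf{F}$, $\textbf{D}$, $\textbf{S}$, $\textbf{RC}$ and the implications of Theorem \ref{thm:induction}, driven by Lemma \ref{lem:runningMMP}) and inducts on dimension, rather than reducing to a model with rationally connected strata.

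Two secondary issues. In the spectral sequence for $H_2$ of the central fibre, the double intersections $D_{ij}$ enter through $H_1(D_{ij},A)$, not through divisor-type classes; if the $D_{ij}$ were rationally connected those groups would simply vanish, so Lefschetz $(1,1)$ is not the relevant mechanism. Since in fact the $D_{ij}$ need not be rationally connected, those $E^1$ terms survive and must be compared against the corresponding terms of the Kato-homology spectral sequence as in Lemma \ref{lem:KatoSNC}; your simplified bookkeeping would not close. Finally, the paper explicitly states that it does not know the MMP over $\SP k\Sem{t}$, and instead approximates $\mcX\to\SP k\Sem{t}$ to high order by a projective family over a pointed affine curve over $k$ before running the MMP; your plan implicitly invokes an MMP over the DVR and should either justify that or adopt the same approximation device.
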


This result is about an integral Hodge/Tate conjecture for degenerations of rationally connected varieties. 
The main observation of this article is that such a conjecture (and more general results about cycles) for degenerations of rationally connected varieties follows from the integral Hodge/Tate conjecture (and corresponding results) for smooth rationally connected varieties.

To further explain this observation, we need the concept of Kato homology for a variety. The precise definition can be found in Section \ref{subsec-Kato}. For the moment, it suffices to know that the $a$-th Kato homology with coefficients in an abelian group $A$, denoted by $KH_a(X, A)$, is defined for all finite type schemes over a field with a homology theory with coefficient $A$. The Borel-Moore homology with $\ZZ$-coefficient for complex varieties, or the \'etale Borel-Moore homology with $\ZZ/n\ZZ$-coefficient are examples of such a homology theory.
For a smooth variety $X$ and $A=\ZZ$ or $\ZZ/n\ZZ$, we have the following isomorphisms
\[
KH_0(X, A) \cong H_0(X, A),
\]
\[
KH_1(X, A) \cong H_1(X, A),
\]
and an exact sequence
\[
 \text{CH}_1(X)\otimes A \to H_2(X, A) \to KH_2(X, A) \to 0.
\]

Thus $KH_2(X, A)$ measures the failure of the integral Hodge/Tate conjecture. This motivates the following.

Consider the statements:
\begin{enumerate}

\item 
[$\textbf{S}(n, k)$](\textbf{S}mooth fibration): Let $X \to Y$ be a smooth projective morphism between smooth quasi-projective varieties such that a general fiber is rationally connected. Assume that $\dim X \leq n$. Then $$KH_a(X, A)\cong KH_{a}(Y, A)\text{ for } 0 \leq a \leq k$$ and $$KH_{k+1}(X, A) \to KH_{k+1}(Y, A) \text{ is surjective.}$$ 

\item 
[$\textbf{F}(n, k)$](\textbf{F}ibration): Let $X \to Y$ be a rationally connected fibration. Assume that $X$ and $Y$ are smooth and that  $\dim X \leq n$. Then $$KH_a(X)=KH_a(Y) \text{ for } 0 \leq a \leq k$$ and $$KH_{k+1}(X, A) \to KH_{k+1}(Y, A) \text{ is surjective.}$$
 Note that $\textbf{S}(n, k)$ is a special case of this.
 
\item 
[$\textbf{D}(n, k)$](\textbf{D}egeneration): Let $\pi: X \to Y$ be a rationally connected fibration. Assume that $\dim X\leq n$. Let $D \subset Y$ be a reduced simple normal crossing divisor such that $\pi^{-1}(D)$ is a simple normal crossing divisor, and let $E$ be the reduced part of $\pi^{-1}(D)$. Then we have $$KH_a(E, A)=KH_a(D, A), 0 \leq a \leq k$$ and $$KH_{k+1}(E, A) \to KH_{k+1}(D, A) \text{ is surjective.}$$

\item 
[$\textbf{R}(n, k)$](\textbf{R}esolution): Let $(X, \Delta)$ be a klt quasi-projective pair. Let $X' \to X$ be a resolution of singularities. Assume $\dim X \leq n$. Then $$KH_a(X', A)=KH_a(X, A) \text{ for } 0\leq a \leq k$$ and $$ KH_{k+1}(X', A) \to KH_{k+1}(X, A) \text{ is surjective.}$$
\end{enumerate}

Some special cases of the above statements are of particular interest:
\begin{enumerate}
\item[$\textbf{RC}(n, k)$](\textbf{R}ationally \textbf{C}onnected varieties): Let $X$ be a smooth projective rationally connected variety of dimension at most $n$. Then $KH_a(X, A)=0, 0<a\leq k, KH_0(X, A)=A$.
\item [$\textbf{DC}(n, k)$](\textbf{D}egeneration over a \textbf{C}urve): Let $X \to (B, b)$ be a rationally connected fibration over a pointed curve. Assume that $X$ is smooth of dimension $n$ and $X_b$ is a simple normal crossing divisor. Then $KH_a((X_b)_{\text{red}}, A)=0, 0<a\leq k, KH_0((X_b)_{\text{red}}, A)=A$.
\end{enumerate}

Needless to say, the first $3$ properties are  relevant for our problem. 
In this article we use the minimal model program to analyze these properties.
While the minimal model program improves the global properties of a variety, we have to pay the price of introducing singularities. 
It is interesting to note that klt singularities behave very much like a smooth point for many problems.
The property $\textbf{R}(n, k)$ is just another sample of this phenomenon.

In Section \ref{sec-MMP-KH}, we will prove the following result.
\begin{thm}[=Theorem \ref{thm:induction}]
We have the following implications.
\begin{enumerate}
\item
$\textbf{R}(n, k) +\textbf{F}(n, k)\implies \textbf{R}(n+1, k)$
\item
$\textbf{R}(n, k)+\textbf{F}(n, k) \implies \textbf{D}(n+1, k)$
\item 
$\textbf{RC}(n, k) \implies \textbf{S}(n, k)$ for $k \leq 2$.
\item 
$\textbf{D}(n, k)+\textbf{S}(n, k)\implies \textbf{F}(n, k)$
\end{enumerate}
\end{thm}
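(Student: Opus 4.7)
\emph{Strategy.} All four implications proceed by comparing the localization (Gysin) long exact sequence in Kato homology
\[
\cdots \to KH_a(Z,A) \to KH_a(W,A) \to KH_a(W\setminus Z,A) \to KH_{a-1}(Z,A) \to \cdots
\]
attached to two compatible closed-open decompositions, and closing the argument with the five lemma. The role of the minimal model program is to produce the stratifications on which the hypotheses apply.

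\emph{Proof of (3).} Let $\pi:X\to Y$ be smooth projective with RC fibers and $\dim X\le n$. The plan is to exploit a Leray-type or Bloch--Ogus spectral sequence for Kato homology whose $E^2$-terms involve $KH_p(Y)$ with coefficients in the fiberwise Kato homology sheaves $\mathcal{H}_q(\pi)$. Hypothesis $\textbf{RC}(n,k)$ forces the vanishing of these sheaves for $0<q\le k$ with $\mathcal{H}_0=A$, so that for $k\le 2$ the spectral sequence collapses in the required range, yielding $KH_a(X)\cong KH_a(Y)$ for $a\le k$ together with surjectivity at $k+1$. The ceiling $k\le 2$ reflects the fact that beyond this range the higher differentials in the spectral sequence are not controlled by $\textbf{RC}$ alone.

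\emph{Proof of (4).} Start with an RC fibration $\pi:X\to Y$ between smooth varieties, $\dim X\le n$. Using Hironaka, arrange by a projective birational modification that the discriminant locus $D\subset Y$ is SNC, the reduced preimage $E=\pi^{-1}(D)_\text{red}$ is SNC, and $\pi$ is smooth with RC fibers over $U=Y\setminus D$. Then $\textbf{S}(n,k)$ gives $KH_a(\pi^{-1}(U))\cong KH_a(U)$ in the prescribed range, while $\textbf{D}(n,k)$ gives $KH_a(E)\cong KH_a(D)$. Comparing the two localization sequences via the five lemma finishes the proof.

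\emph{Proofs of (1)--(2) and main obstacle.} For (1), take a klt pair $(X,\Delta)$ of dimension $n+1$ and a resolution $f:X'\to X$, and let $E'\subset X'$ be the exceptional locus, $S=f(E')\subset X$. Since $f$ is an isomorphism off these loci, the localization sequences reduce the claim to $KH_a(E')\cong KH_a(S)$. By standard results on resolutions of klt singularities (Hacon--McKernan, following Shokurov), the fibers of $f$ over points of $S$ are rationally chain connected, so after resolving $E'$ and $S$ using $\textbf{R}(n,k)$ (valid since both have dimension $\le n$) the induced map is an RC fibration of dimension $\le n$ to which $\textbf{F}(n,k)$ applies. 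Implication (2) follows the same pattern with $E,D$ only SNC rather than smooth: the plan is to run a relative $(X,E)$-MMP over $Y$ (the klt category being stable under the divisorial contractions and flips involved), absorbing each step into the induction via $\textbf{R}(n,k)$ and a Mayer--Vietoris analysis of the SNC structure, until $E\to D$ is stratified by RC fibrations amenable to $\textbf{F}(n,k)$. The principal technical obstacle is located here: tracking Kato homology through flips, which are non-birational on $E$, and closing the Mayer--Vietoris bookkeeping so that the conclusion on each stratum lifts to the global SNC scheme. This is exactly where the klt hypothesis does the real work, funneled through $\textbf{R}(n,k)$.
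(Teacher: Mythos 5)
Your plan for (4) matches the paper's, and you correctly identify that (1) and (2) carry the real technical weight. But your sketches for (1) and (3) each have a concrete gap.

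\textbf{Regarding (3).} You propose a ``Leray-type or Bloch--Ogus spectral sequence for Kato homology'' with $E^2$-terms $KH_p(Y)$ valued in fiberwise Kato homology sheaves. No such spectral sequence is available: Kato homology is built from the niveau filtration of the underlying homology theory $H_\bullet$, which does not interact cleanly with a fibration $\pi:X\to Y$. What the paper does instead is work at the level of $H_\bullet$, where a genuine Leray spectral sequence does exist. It reduces to the case of an affine base $Y$ by cutting with an ample divisor and invoking localization plus induction, then uses Artin vanishing (for affine $Y$) together with $H_1(X_y,A)=0$ for RC fibers to get $H_i(X,A)\cong H_i(Y,A)$ for $i\le 2$ and surjectivity for $i=3$. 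This already handles $KH_0$ and $KH_1$ since $KH_a\cong H_a$ for $a\le 1$ on smooth varieties. For $KH_2$ it uses the five-term exact sequence relating $H_\bullet$, $KH_\bullet$, and $CH_1\otimes A$ (from the Bloch--Ogus resolution) plus the Graber--Harris--Starr theorem to get surjectivity of $CH_1(X)\to CH_1(Y)$. The bound $k\le 2$ is precisely because the comparison between $KH$ and $H$ stops being this explicit beyond degree $2$ --- not because of uncontrolled differentials in a hypothetical Kato-homology spectral sequence.

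\textbf{Regarding (1).} Reducing to ``$KH_a(E')\cong KH_a(S)$'' via localization is fine, but your next step --- resolve $E'$ and $S$ and apply $\textbf{F}(n,k)$ to the induced map --- does not go through. The exceptional divisor $E'$ of a log resolution is an SNC divisor (neither irreducible nor smooth as a variety), $S$ may be singular and of strictly smaller dimension, and the map $E'\to S$ is not a fibration (it is not even an algebraic fiber space; after resolving both sides the fibers are typically disconnected). The paper avoids this by not trying to apply $\textbf{F}$ naively: it takes a log resolution $(X_0, p_*^{-1}\Delta+E)$ and runs an MMP over $X$. Negativity forces the final model to contract every exceptional divisor, and $\QQ$-factoriality of $X$ then identifies the final model with $X$ itself, reached via a divisorial contraction at the last step. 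The actual comparison of Kato homology across the MMP steps is the content of Corollary \ref{cor:iso_resolution}, which in turn rests on the key Lemma \ref{lem:runningMMP}: at each divisorial contraction or flip one identifies, inside the double complex of lc-center Kato homologies, acyclic subcomplexes of the form $KH_a(\Delta_0\cap Z)\to KH_a(Z)$ whose acyclicity is exactly where $\textbf{R}(n,k)$ and $\textbf{F}(n,k)$ enter --- applied to log Fano fibrations between lc centers, not to $E'\to S$ wholesale. The non-$\QQ$-factorial case then needs a further reduction using the Koll\'ar component construction and noetherian induction, which is absent from your sketch. Your treatment of (2) has the right flavor (run a relative MMP, control strata via $\textbf{R}$ and $\textbf{F}$), but it also omits the crucial structural inputs the paper uses: a relatively trivial divisor supported on $\Delta^{=1}$ to guarantee a $\Delta_0$ with $\Delta_0\cdot R>0$ at each step, the one-to-one correspondence of lc centers across a relative Picard rank one Fano contraction, and the Koll\'ar--Shokurov connectedness theorem to make the stratumwise maps into algebraic fiber spaces with log Fano generic fibers.
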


\begin{rem}
Totaro \cite{Totaro_IHC_Kodaira_dim_0} proved that an isolated $3$-fold rational singularity also satisfies the resolution property if one only considers the Kato homology $KH_2$. 
But even this could fail in higher dimensions. In fact, Ottem and Suzuki \cite{Ottem_IHC_Pencil_Enriques} constructed an example of a smooth complex $3$-fold with $H^i(X, \OO_X)=0, i>0$ for which the integral Hodge conjecture fails. 
Thus by taking a cone over $X$, one can construct an isolated $4$-fold rational singularity which does not satisfy the resolution property for $KH_2$. 
It seems that klt singularities is the natural class of singularities to consider the resolution property for general Kato homology.
\end{rem}

\begin{rem}
The proof of Theorem \ref{thm:induction}, especially the key lemma \ref{lem:runningMMP}, is a natural continuation of the study of homotopy types of the dual complex by de Fernex-Koll\'ar-Xu (\cite{deFKollarXuDualComplex}). 
In fact, once we know the dual complex is simply connected, then the contractibility is equivalent to vanishing of the higher homology of the complex $C^\bullet(\Delta^{=1}, 0, A)$ to be defined in Section \ref{sec-MMP-KH}.
\end{rem}

We finally summarize what is known about these statements. By the work of Bloch-Srinivas \cite{BlochSrinivas}, $KH_3(X, A)=0$ for a uniruled $3$-folds. The work of Voisin (\cite{VoisinIHC3fold}, see also Theorem \ref{thm:IHC}) implies that $KH_2(X, A)=0$ for a uniruled $3$-folds. Combine these two results with Theorem \ref{thm:induction}, we have the following.
\begin{cor}
The statements $\textbf{F}(3, k), \textbf{R}(4, k), \textbf{D}(4, k)$ hold.
\end{cor}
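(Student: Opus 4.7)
The plan is to assemble the corollary from Theorem \ref{thm:induction} (the induction theorem) using the cited theorems of Voisin and Bloch-Srinivas as input at dimension $3$ and standard surface theory as a base case. Throughout I fix $k \le 2$, which is the range in which statement $3$ of Theorem \ref{thm:induction} is available.

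First I would collect what is already known at dimension $3$. Voisin's result gives $KH_2(X, A) = 0$ for any smooth projective uniruled (hence in particular rationally connected) $3$-fold, and Bloch-Srinivas gives $KH_3(X, A) = 0$; combined with the elementary $KH_0 = A$ and the vanishing $KH_1 = 0$ that follows from simple connectedness of a smooth projective RC variety, this is precisely $\textbf{RC}(3, k)$. Statement $3$ of Theorem \ref{thm:induction} then yields $\textbf{S}(3, k)$ at once. Next I would dispose of the surface base cases: $\textbf{R}(2, k)$ follows because klt surface singularities are rational, so a resolution does not change the relevant Kato homology, while $\textbf{F}(2, k)$ reduces to the trivial fibration or a conic bundle over a smooth curve and can be verified directly. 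With these in hand, statements $1$ and $2$ of Theorem \ref{thm:induction} applied at $n = 2$ yield $\textbf{R}(3, k)$ and $\textbf{D}(3, k)$.

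The remaining implications are then a matter of unwinding Theorem \ref{thm:induction}. Statement $4$ converts $\textbf{D}(3, k)$ together with $\textbf{S}(3, k)$ into $\textbf{F}(3, k)$, which is the first conclusion of the corollary. Statements $1$ and $2$, now applied at $n = 3$, convert $\textbf{R}(3, k) + \textbf{F}(3, k)$ into $\textbf{R}(4, k)$ and $\textbf{D}(4, k)$ respectively, delivering the other two conclusions. The substantive content lies entirely in the cited results of Voisin and Bloch-Srinivas; the main obstacle in the proof itself is simply to thread the four implications of Theorem \ref{thm:induction} in the right order, since $\textbf{R}$ and $\textbf{F}$ at dimension $n$ must be established before one can extract $\textbf{R}$, $\textbf{D}$, or $\textbf{F}$ at dimension $n+1$.
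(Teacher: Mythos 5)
Your proposal is correct and follows the same route as the paper, which gives only the one-line indication ``combine Bloch--Srinivas and Voisin with Theorem~\ref{thm:induction}.'' Where you add value is in making the chain of implications explicit: $\textbf{RC}(3,k)\Rightarrow\textbf{S}(3,k)$ via (\ref{6.3}), then $\textbf{R}(2,k)+\textbf{F}(2,k)\Rightarrow\textbf{R}(3,k),\textbf{D}(3,k)$ via (\ref{6.1}),(\ref{6.2}), then $\textbf{D}(3,k)+\textbf{S}(3,k)\Rightarrow\textbf{F}(3,k)$ via (\ref{6.4}), and finally $\textbf{R}(3,k)+\textbf{F}(3,k)\Rightarrow\textbf{R}(4,k),\textbf{D}(4,k)$ via (\ref{6.1}),(\ref{6.2}) again. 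You also correctly spot that (\ref{6.3}) forces the restriction $k\le 2$, even though the inputs $KH_2=KH_3=0$ for uniruled $3$-folds would give $\textbf{RC}(3,3)$; the paper leaves this range implicit.

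The one place your write-up is looser than it needs to be is the treatment of the base cases $\textbf{R}(2,k)$ and $\textbf{F}(2,k)$. Appealing to rationality of klt surface singularities and a case analysis of conic bundles is morally fine, but it is cleaner and more uniform to notice that the same four implications of Theorem~\ref{thm:induction} bootstrap all the way down: $\textbf{R}(0,k),\textbf{F}(0,k),\textbf{RC}(\le 2,k)$ are trivial or classical (klt curves are smooth, rational surfaces are simply connected with $CH_1\to H_2$ surjective, and $KH_a$ vanishes above the dimension), so $\textbf{R}(1,k),\textbf{F}(1,k),\textbf{D}(2,k),\textbf{S}(2,k),\textbf{F}(2,k),\textbf{R}(2,k)$ all fall out of (\ref{6.1})--(\ref{6.4}) by the same pattern you use at dimension $3$. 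That avoids asserting unverified facts about conic bundles over curves. Otherwise the argument is complete and matches the paper's intent.
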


The article is organized in the following way. We review the concept of homology theory and Kato homology in Section \ref{sec-H-KH}, the integral version of the Hodge/Tate conjecture in Section \ref{sec-IHC}, some results in the minimal model program in Section \ref{sec-MMP}. 
In Section \ref{sec-MMP-KH} we study the behavior of Kato homology in the running of the minimal model program. 
We work out an explicit example (Example \ref{ex:smooth_blowup}) of how the Kato homology changes in the case of a smooth blow-up at the beginning of this section. 
Readers not so familiar with the minimal model program can read this example first to get a rough idea of the general situation.
Finally, the main theorems are proved in Section \ref{sec-main}.


\textbf{Acknowledgment:} I am grateful to Olivier Wittenberg for bringing this problem to my attention, for helpful email discussions, and for finding a gap in a preliminary draft. I also thank Jean-Louis Colliot-Th\'el\`ene for many comments, and Chenyang Xu for providing references. This work is partially supported by NSFC grants No. 11871155, No. 11831013, No. 11890660, No.11890662.

\section{Homology theory, Kato homology}\label{sec-H-KH}
The main reference for this section is \cite{KertzSaito, BlochOgus}.
\subsection{Homology theory}\label{subsec-homology}
Let $k$ be an algebraically closed field of characteristic $p\geq 0$. 

We denote by $\mathcal{C}$ the category of schemes $X/k$ that are separated and of finite type.
\begin{defn}
Let $\mathcal{C}_*$ be the category with the same objects as $\mathcal{C}$, but where morphisms are just the proper morphisms. Let $\text{Ab}$ be the category of abelian groups. A \textbf{homology theory} $H = \{H_a \}, a \in \ZZ$ on $\mathcal{C}$ is a sequence of covariant functors:
$H_a(\bullet): \mathcal{C}_* \to \text{Ab}$ satisfying the following conditions:
\begin{enumerate}
\item 
 For each open immersion $j : V\to  X$ in $\mathcal{C}$, there is a map $j^*: H_a(X) \to H_a(V)$, associated to $j$ in a functorial way.
\item \label{item 2}
If $i:Y\to X$ is a closed immersion in $X$, with open complement $j:V\to X$, there is a long exact sequence (called the \textbf{localization sequence})
\[
\ldots \xrightarrow{\partial} H_a(Y) \xrightarrow{i_*} H_a(X) \xrightarrow{j^*} H_a(V) \xrightarrow{\partial} H_{a-1}(Y) \to \ldots .
\]
The maps $\partial$ are called the \textbf{connecting morphisms}. This sequence is functorial with respect to proper morphisms or open immersions.

\end{enumerate}
A \textbf{morphism between homology theories} $H$ and $H'$ is a morphism $\Phi : H \to H'$ of functors on $\mathcal{C}_*$, which is compatible with the long exact sequences in (\ref{item 2}).
\end{defn}

\begin{ex}\label{ex:homology}
There are two type of homology theories that we will mainly use in this article.
\begin{enumerate}
\item \cite[Example 2.3]{BlochOgus} For a complex scheme $X$, we take the associated complex analytic space $|X|$ and take $H_a(X, \ZZ)=H_a^{BM}(|X|, \ZZ)$, the Borel-Moore homology. For $X$ smooth of pure dimension $d$, $H_a(X, \ZZ)=H^{2d-a}(|X|, \ZZ)$, the singular cohomology. For $X$ projective, $H_a(X, \ZZ)=H_a(|X|, \ZZ)$, the singular homology.

\item \cite[Example 1.4, Lemma 1.5]{KertzSaito} For schemes defined over an algebraically closed field $k$, and a torsion group $A$ whose elements are annihilated by a number relatively prime to the characteristic, we use the \'etale Borel-Moore homology
\[
H_a(X, A)=H^{-a}(X_{\text{\'et}}, Rf^! A),
\]
 where $f: X \to k$ is the structure map. In particular, for $X$ smooth of pure dimension $d$, $H_a(X, A)=H^{2d-a}_{\text{\'et}}(X, A(d))$.
\end{enumerate}
\end{ex}
\subsection{Kato homology}\label{subsec-Kato}
Let us fix an abelian group $A$ and a homology theory $H_\bullet(-)$ with coefficient in $A$. 
Denote by $X_{(i)}$ the set of points of $X$ whose Zariski closure is a subvariety of dimension $i$. 
For $x \in X_{(i)}$,  we define $$H_{a}(x, A)=\lim_{V \subset \bar{x}} H_{a}(V, A)$$ using the pull-back on homology, where the limit is taken over all Zariski open subsets of the closure of $x$.
Given such a homology theory, we have the spectral sequence of homological type associated to every $X \in  \text{Ob}(\mathcal{C})$, called the niveau spectral sequence (cf. \cite{BlochOgus}):
\[
E^1_{a, b}=\oplus_{x \in X_{(a)}} H_{a+b}(x, A) \implies H_{a+b}(X, A).
\]
\begin{defn}\label{def:KH}
For every $d$-dimensional scheme $X$ in $\mathcal{C}$, denote by $KH_a(X, A)$ the $a$-th Kato homology defined as the $a$-th homology of the complex
\[
 KH(X): E^1_{d, 0} \to E^1_{d-1, 0} \to \ldots \to E^1_{0, 0},
\]
where the differentials are induced by the connecting map $\partial$ in the localization exact sequence.
\end{defn}
\begin{rem}
The original definition in Kato \cite{Kato_KH} uses a different map between each terms of the complex than the one used by Bloch-Ogus \cite{BlochOgus}. It is shown in \cite[Paragraph 3.5, Theorem 3.5.1]{JSS_duality} that the two definitions only differ by a sign.
\end{rem}
The properties we need are the following:
\begin{enumerate}
\item[P1] $KH_a(X)$ is covariant with respect to proper morphisms and contravariant with respect to open immersions.
\item[P2]  Let $Z \subset X$ be a closed subvariety and $U$ its open complement. Then we have a long exact sequence
\[
\ldots \to KH_a(Z) \to KH_a(X) \to KH_a(U) \to KH_{a-1}(Z)\to \ldots.
\]
\item[P3] Let $f: Y \to X$ be a birational projective morphism, $E \subset Y$ the exceptional locus, and $B\subset X$ the image of $E$. Then we have a long exact sequence:
\[
\ldots \to KH_a(E) \to KH_a(Y) \oplus KH_a(B) \to KH_a(X) \to KH_{a-1}(E) \to \ldots.
\]
\item[P4]\label{SNC}  Let $D$ be a simple normal crossing variety. Then there is a spectral sequence $$\{E^1_{p, q}=KH_q(E_{p}, A), d_1: E^1_{p, q} \to E^1_{p-1, q}\}$$ converging to $KH_{q+p}(E, A)$, where $E_i, i\geq 0$ is the disjoint union of all the intersections of $(i+1)$ irreducible components of $E$, and $d_1: E^1_{p, q} \to E^1_{p-1, q}$ is the natural map induced by the inclusion.
\item[P5] More generally, for any simplicial scheme,
\[
D_\bullet=\{D_n (n\in \ZZ_{\geq 0}), \delta_a: D_n \to D_{n-1} (0\leq a \leq n)\}
\]
the Kato homology of $D_\bullet$ is defined by the homology of the double complex
\begin{equation}\label{doublecomplex}
\ldots \xrightarrow{\partial} KH(D_n) \xrightarrow{\partial} KH(D_{n-1}) \xrightarrow{\partial} \ldots \xrightarrow{\partial} KH(D_0),
\end{equation}
where $\partial=\sum_{a=0}^n (-1)^a (\delta_a)_*$. 
For the case of simple normal crossing divisor $D$ in a variety, there is a natural associated simplicial scheme, and the spectral sequence coming from the double complex is the one in (\ref{SNC}) and computes the Kato homology of $D$. The same construction applies to the case of an effective divisor $D$ in a variety $X$.

\item[P6]  If $X$ and $Y$ are smooth projective varieties stably birational to each other, then $KH_a(X) \cong KH_a(Y)$. More generally, if $Y \to X$ is a birational projective morphism between quasi-projective varieties, then $KH_a(Y) \cong KH_a(X)$ for all $a$.

\end{enumerate}

The first five properties follows directly from definition and properties of homology theory and Kato homology. 
For the stable birational invariance in characteristic $0$, by weak factorization theorem, 
it suffices to check the invariance for blow-up/blow-down and product with a projective space, 
which is easily computable by definition of Kato homology (use lines in projective spaces to realize homotopy between complexes).

For the birational invariance between quasi-projective varieties, first note that one can find a sequence of blow-up along smooth centers
\[
X_n\to X_{n-1} \to \ldots X_1 \to X_0=X
\] such that $X_n$ admits a morphism to $Y$. By the computation for blow-ups, we know that the composition $KH_a(X_n) \to KH_a(Y) \to KH_a(X)$ induces an isomorphism $KH_a(X_n) \cong KH_a(X)$. Thus $KH_a(Y) \to KH_a(X)$ is surjective, and this is true for any projective birational morphism. In particular, $KH_a(X_n) \to KH_a(Y)$ is also surjective. Then it follows that $KH_a(Y) \to KH_a(X)$ is injective.

\section{Integral Hodge/Tate conjecture for one cycles}\label{sec-IHC}
Voisin has made the following conjecture.
\begin{conj}\label{conj:IHC_RC}
 Let $X$ be a smooth projective separably rationally connected variety of dimension $d$ defined over an algebraically closed field. Then $H^{2d-2}_{\text{\'et}}(X, \ZZ_l(d-1))$ is algebraic. If $X$ is defined over the complex numbers, then $H^{2d-2}_{\text{sing}}(X, \ZZ)$ is algebraic.
 \end{conj}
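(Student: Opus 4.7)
The plan is to establish Conjecture \ref{conj:IHC_RC} by induction on $d = \dim X$. In the language of Section \ref{sec-H-KH}, the conjecture is equivalent to $\mathbf{RC}(d, 2)$, i.e.\ the vanishing $KH_2(X, A) = 0$ for $A = \ZZ$ in the complex Hodge setting and $A = \ZZ/\ell^n \ZZ$ (followed by an inverse limit over $n$) in the $\ell$-adic Tate setting. The base cases are essentially classical: $d \leq 2$ reduces to Lefschetz $(1,1)$ for the Hodge part and to the Tate conjecture for divisor classes on surfaces for the $\ell$-adic part, while $d = 3$ is Voisin's theorem (Theorem \ref{thm:IHC}), whose proof uses the intermediate Jacobian, the Abel--Jacobi map, and the vanishing of $H^3(X, \OO_X)$ on rationally connected threefolds.

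For the inductive step, I would run the MMP on $X$ to obtain a birational model $X'$ with $\QQ$-factorial terminal singularities; since $X$ is rationally connected, MMP terminates at a Mori fiber space $\pi : X' \to B$. The singularities of $X'$ are absorbed by the resolution property $\mathbf{R}(d, 2)$, which follows from the inductive hypothesis via Theorem \ref{thm:induction}(1), so it suffices to show $KH_2(X', A) = 0$. \textbf{If $\dim B \geq 1$}, both $B$ and the general fiber $F$ of $\pi$ are rationally connected of dimension strictly less than $d$. One then combines: the smooth-fibration transfer $\mathbf{S}(d, 2)$ over the locus of smooth fibers (deduced from $\mathbf{RC}(d-1, 2)$ applied to the generic fiber via Theorem \ref{thm:induction}(3)), the degeneration statement $\mathbf{D}(d, 2)$ to handle the discriminant locus (Theorem \ref{thm:induction}(2), available by induction), and the vanishing $KH_2(B, A) = 0$ from the inductive hypothesis applied to $B$. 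Combining these with the long exact sequences and the machinery of property $\mathbf{F}(d,2)$ of Theorem \ref{thm:induction}(4) yields $\mathbf{RC}(d, 2)$ for $X'$, hence for $X$.

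The main obstacle is the case $\dim B = 0$, i.e.\ when $X$ is birational to a Picard-rank-one Fano variety with terminal singularities: the MMP/fibration reduction produces no useful lower-dimensional data, and a genuinely new geometric input is required. A plausible strategy is to exploit the universal family of minimal rational curves $\mathrm{RatCurves}^n(X)$: the evaluation map produces a rich supply of algebraic one-cycle classes in $H^{2d-2}(X, \ZZ)$, and one can hope that letting the minimal rational curve vary exhausts this group modulo torsion. The subtle point, and in my view the crux of the difficulty, is the torsion subgroup of $H^{2d-2}(X, \ZZ)$: no current technique controls it integrally even for smooth rationally connected fourfolds of Picard rank one, and I expect that resolving this will require either a Hodge-theoretic argument generalizing Voisin's intermediate-Jacobian approach to higher dimensions, a $p$-adic Hodge-theoretic input, or a delicate analysis via unramified cohomology of the type developed by Colliot-Th\'el\`ene and collaborators. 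Consequently, while the MMP machinery of the paper gives a clean inductive framework whenever a non-trivial Mori fibration exists, the Picard-rank-one Fano case is the step where I expect a fundamentally new idea to be required.
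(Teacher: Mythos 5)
The statement you were asked to prove is labelled as a \emph{conjecture} in the paper (Conjecture~\ref{conj:IHC_RC}, due to Voisin), and the paper does not prove it. It is used as a \emph{hypothesis}: in Theorem~\ref{thm:Laurant} the surjectivity of the cycle class map for one cycles on rationally connected varieties appears as alternative~(2), and the only unconditional cases the paper records are $\dim X = 3$ (Voisin, \cite{VoisinIHC3fold}) and smooth Fano fourfolds (H\"oring--Voisin, \cite{HoeringVoisin}), collected in Theorem~\ref{thm:IHC}, together with the implication from the Tate conjecture for surfaces over finite fields (Theorem~\ref{thm:Voisin}). So there is no ``paper's own proof'' to compare your argument against; any attempt to derive the conjecture unconditionally must supply a genuinely new input.

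Your write-up is therefore not a proof, and you say so yourself: the crux is the Mori fibre space with $\dim B = 0$, i.e.\ the case where $X$ is birational to a $\QQ$-factorial terminal Fano variety of Picard rank one. Your diagnosis of where the MMP induction breaks is accurate and matches the structure of the paper: when $\dim B \geq 1$, the properties $\textbf{R}$, $\textbf{S}$, $\textbf{D}$, $\textbf{F}$ supplied by Theorem~\ref{thm:induction} (fed by the inductive hypothesis $\textbf{RC}(n,2)$ for $n<d$) do pass the vanishing of $KH_2$ from the base up the fibration, but when $\dim B = 0$ the resolution property $\textbf{R}(d,2)$ only tells you $KH_2(\widetilde{X'}) \cong KH_2(X')$ with nothing on the other side to compare against, and the induction becomes circular. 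This is precisely why the statement remains a conjecture. One small calibration: the induction you describe does not actually close even for the $\dim B \geq 1$ stratum, because establishing $\textbf{RC}(n,2)$ for $n<d$ already requires handling the rank-one Fano case in those lower dimensions; the MMP machinery is a reduction, not a proof, and your final paragraph correctly treats it as such. Your suggestions about minimal rational curves and torsion classes are a reasonable description of the known obstructions, but they are directions of research, not steps in an argument.

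If your intent was to summarize the state of the art and isolate the open ingredient, this is a fair account and is consistent with the paper's framing. If your intent was to present a proof, it is incomplete in an essential way, and no amount of the paper's MMP technology will close the Fano--Picard-rank-one gap.
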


 Usually this conjecture is referred as integral Hodge/Tate conjecture for one cycles on (separably) rationally connected varieties.
 
 \begin{rem}
 The usual formulation of Tate/Hodge conjecture says that the cycle class map is surjective onto a subspace of the cohomology group that is either invariant under certain Galois group (Tate) or lies in the $(p,p)$-part in the Hodge decomposition (Hodge).
 For (separably) rationally connected varieties, because of the existence of a decomposition of the diagonal \`a la Bloch-Srinivas \cite{BlochSrinivas}, we know that every class in $H^{2d-2}_{\text{\'et}}(X, \QQ_l(d-1))$ or $H^{2d-2}_{\text{sing}}(X, \QQ)$ is algebraic.
 Thus in the integral version, we ask whether the whole space of integral cohomology is generated by classes of algebraic cycles.
 \end{rem}
 
 \begin{thm}\label{thm:IHC}
 Conjecture \ref{conj:IHC_RC} is true for rationally connected smooth projective varieties defined in an algebraically closed field of charateristic $0$ in the following cases:
 \begin{enumerate}
 \item 
(Voisin \cite{VoisinIHC3fold}) $X$ has dimension $3$. 
\item (H\"oring-Voisin \cite{HoeringVoisin}) $X$ is a \textbf{smooth} Fano $4$-fold. 
\end{enumerate}  
 \end{thm}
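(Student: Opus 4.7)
The plan is to handle (1) and (2) via a common first step and then diverging techniques. In both cases the rational statement is immediate: since $X$ is rationally connected, $CH_0(X_{k(X)})\otimes\QQ$ has rank one, and the Bloch–Srinivas decomposition of the diagonal $N\Delta_X = Z_1 + Z_2$ in $CH_d(X\times X)$ (with $Z_1$ supported on $D\times X$ and $Z_2$ on $X\times W$ for $W$ one-dimensional) shows that every class in $H^{2d-2}(X,\QQ_\ell(d-1))$ is algebraic. The real task is therefore to upgrade this to an integral statement, namely to prove that the cokernel of the cycle class map $CH_1(X)\to H^{2d-2}(X,\ZZ_\ell(d-1))$ vanishes, equivalently that the group of integral Hodge/Tate classes is algebraic.

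For (1), the threefold case, I would reduce to a statement about the Abel–Jacobi map. Since $h^{3,0}(X)=0$ by rational connectedness, the intermediate Jacobian $J^2(X)$ is an abelian variety, and the cycle class map sits in an exact sequence whose cokernel is the failure of the integral Hodge conjecture. The idea is to exhibit a smooth variety $T$ parametrizing a relative curve $\mcC\subset X\times T$ such that the induced Abel–Jacobi morphism $\mathrm{Alb}(T)\to J^2(X)$ is surjective with connected fibers; this implies every integral Hodge class in $H^4(X,\ZZ)$ is the image of an algebraic one-cycle. Concretely I would construct $T$ using a sufficiently ample family of rational curves on $X$, invoking bend-and-break and the abundance of rational curves on a rationally connected threefold to guarantee that the associated universal family is broad enough. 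The main obstacle is precisely the connectedness-of-fibers statement, which forces a careful Lefschetz-type analysis of the universal family together with a degeneration argument to a nodal or conic-bundle model where the geometry of curves is explicit.

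For (2), the smooth Fano fourfold case, there is no useful intermediate Jacobian target, so the strategy shifts. I would exploit the Mori-theoretic structure of a smooth Fano fourfold: by running a relative MMP one can often find either a Mori fiber space structure over a base of dimension at most two (a del Pezzo or conic bundle), or sufficiently many families of rational surfaces (scrolls, quadric surfaces, or planes) sweeping out $X$. The plan is then to push forward the integral Hodge conjecture from the base and fibers, where it is known (surfaces, threefold fibrations, or products), onto $X$ via such a sweeping family. The hard part, and the reason this case is restricted to the smooth Fano setting, is that the control of exceptional loci and indeterminacy of the MMP operations must be integral, not merely rational, and the relevant fibrations do not exist on a general rationally connected fourfold. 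In both parts, the fundamental obstacle is thus identical: the rational statement is cheap, but integrality forces a delicate connectedness/torsion analysis of a geometrically constructed moduli space of curves, and this analysis currently only succeeds when the ambient geometry (threefold, or smooth Fano with abundant algebraic surfaces) is rich enough to trivialize the relevant obstruction.
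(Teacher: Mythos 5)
The paper does not prove Theorem \ref{thm:IHC}; it is a pure citation of Voisin \cite{VoisinIHC3fold} and H\"oring--Voisin \cite{HoeringVoisin}, and no argument is reproduced in the text. So the question is whether your sketch faithfully reconstructs those two external proofs, and there it diverges in important ways.

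For the threefold case, your framing via the intermediate Jacobian and a family of curves is the right general landscape, but the technical engine you propose is not the one Voisin uses. Her proof is variational-Hodge-theoretic: using $H^{3,0}(X)=0$, she reduces to showing that the integral Hodge classes on a sufficiently ample smooth surface $S\subset X$ that remain Hodge under deformation of $S$ surject onto $H^4(X,\ZZ)$, and the key input is the density (and infinitesimal structure) of the Noether--Lefschetz locus for the family $|\mathcal O_X(S)|$, via Green's infinitesimal criterion. There is no bend-and-break or deformation theory of rational curves in that argument, and the ``connectedness of the fibers of $\mathrm{Alb}(T)\to J^2(X)$'' is not the formulation she works with. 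Moreover, your claimed implication (surjective Abel--Jacobi map with connected fibers $\Rightarrow$ integral Hodge conjecture for one-cycles) is not a formal consequence: $H^4(X,\ZZ)$ contains both a continuous part (captured by $J^2(X)$) and a discrete lattice part plus torsion, and the deduction requires a separate argument to populate the lattice with algebraic classes and to control torsion; that is precisely the content Voisin's Noether--Lefschetz analysis supplies.

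For the Fano fourfold case, the proposal is off on the wrong path. H\"oring--Voisin do not run an MMP on $X$, do not invoke Mori fiber space structures, and do not sweep $X$ by rational surfaces. Their proof takes a smooth anticanonical divisor $Y\in|-K_X|$, which is a threefold with trivial canonical bundle, applies Voisin's Calabi--Yau threefold theorem to conclude $H^4(Y,\ZZ)$ is generated by curve classes, and then uses the Gysin pushforward $H^4(Y,\ZZ)\to H^6(X,\ZZ)$ together with Lefschetz-type arguments to show this map hits everything. The nontrivial inputs are that $|-K_X|$ contains a smooth member (itself established in their paper) and the Lefschetz-theoretic surjectivity with integral coefficients. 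Your proposed Leray-spectral-sequence push from a Mori fiber space would in general fail: the integral cohomology of $X$ is not controlled by that of the base and fibers, and there are genuine extension and torsion issues. In short, the proposal captures the flavor of ``reduce to a subvariety where the statement is known,'' but substitutes the wrong reduction in both cases and would not reconstruct either published proof as written.
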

 
 The strongest evidence for Conjecture \ref{conj:IHC_RC} comes from the following. First recall the Tate conjecture for varieties defined over a finite field.
 
 \begin{conj}[Tate conjecture]\label{conj:Tate}
 Let $X$ be a smooth projective variety of dimension $d$ defined over a finite field. Then the cycle class map
 \[
 CH^i(X) \otimes \QQ_l \to H^{2i}_{\text{\'et}}(X, \QQ_l(i))
 \]
 is surjective.
 \end{conj}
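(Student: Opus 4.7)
The statement is the full Tate conjecture over finite fields, a central open problem. Rather than pretending to have a complete strategy, I will describe the structural approaches one pursues and indicate where the decisive obstacles lie.

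The first reduction I would attempt is to the codimension-one case. For divisors, the cycle class map fits into the Kummer sequence
\[
0 \to \text{Pic}(X)/\ell^n \to H^2_{\text{\'et}}(X, \mu_{\ell^n}) \to \text{Br}(X)[\ell^n] \to 0,
\]
and by Tate's original analysis the conjecture for divisors is equivalent to finiteness of the $\ell$-primary part of the Brauer group $\text{Br}(X)$. My plan for divisors would therefore be to fiber $X$ over a curve, apply the Leray spectral sequence, and control the Brauer groups of the generic and special fibers via zeta-function / $L$-function methods, in the spirit of Tate's program and of the approaches used for $K3$ surfaces.

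For higher codimension, the strategy would be a dimension induction via Lefschetz pencils. Choosing a pencil $X \dashrightarrow \PP^1$, the cohomology $H^{2i}(X, \QQ_\ell(i))$ decomposes via hard Lefschetz and weak Lefschetz into a ``Lefschetz part'' pushed forward from a general hyperplane section $X_t$ and a ``primitive part'' controlled by the vanishing cohomology of $X_t$ together with its monodromy action. If the Tate conjecture is known for cycles of codimension $i-1$ on the general hyperplane section, then pushforward supplies algebraic classes spanning the Lefschetz part; the task is to use semisimplicity of Frobenius on primitive cohomology to produce algebraic classes for the primitive part. Iterating would reduce everything to Tate for divisors on smooth projective surfaces.

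The main obstacle is that Tate's conjecture is open even in quite low-dimensional and well-studied settings. It is known for abelian varieties (Tate, and Zarhin in positive characteristic), for $K3$ surfaces (Nygaard--Ogus, Artin--Swinnerton-Dyer, Madapusi Pera, Charles, Maulik, Kim--Madapusi Pera), for products of such varieties, for Hilbert modular surfaces, and for a handful of other classes—each proof exploiting very specific geometric structure (CM lifts, Kuga--Satake, moduli-theoretic descriptions, or special automorphism groups) that is unavailable in general. A genuine new proof would require substantial new input, most plausibly from integral $p$-adic Hodge theory or from the Langlands program for function fields, and the honest answer to ``how would I prove this'' is that I would not expect to, but would attempt to refine one of the established programs above in the special case at hand.
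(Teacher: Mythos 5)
You have correctly identified that this statement is not something the paper proves: it is labeled ``Conjecture'' precisely because it is a famous open problem, and the paper uses it only as a hypothesis (one of the three alternative assumptions in Theorem \ref{thm:Laurant} and Corollary \ref{cor:IHC}, routed through Schoen's and Voisin's results that Tate for divisors on surfaces over finite fields implies the integral Tate conjecture for one cycles on rationally connected varieties). There is therefore no ``paper's proof'' to compare against, and your response --- a survey of known partial results, the reduction of the divisor case to finiteness of the Brauer group, and the Lefschetz-pencil induction on codimension --- is an appropriate and accurate account of the state of the art rather than a gap in your reasoning. One small refinement worth noting for the paper's purposes: the hypothesis actually invoked in the paper is only Tate for divisor classes on surfaces over finite fields, not the full Conjecture \ref{conj:Tate} in all codimensions, so the Lefschetz-pencil reduction you sketch is in the opposite direction of what the paper needs; the paper takes the surface/divisor case as the input and derives consequences for higher-dimensional cycles via Schoen's theorem.
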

 
 Schoen \cite{SchoenIntegralTate} proved that the classical version of the Tate conjecture for divisor classes on surfaces (with $\QQ_l$-coefficients) implies an integral version of the Tate conjecture for curve classes on all smooth projective varieties over an algebraic closure of a finite field.
 Using deformation theory of rational curves and the above mentioned result of Schoen, Voisin proved the following:
 \begin{thm}[\cite{VoisinCurveClassRC}]\label{thm:Voisin}
 If the Tate conjecture (Conjecture \ref{conj:Tate}) holds for divisor classes in surfaces, then Conjecture \ref{conj:IHC_RC} holds.
 \end{thm}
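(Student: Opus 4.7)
The plan is to deduce the integral Hodge/Tate statement on a rationally connected $X$ over any algebraically closed field $K$ from Schoen's integral Tate conjecture over $\ol{\mathbb{F}_p}$, by specializing an $\ell$-adic cohomology class on $X$ to a fiber over $\ol{\mathbb{F}_p}$ in a spread-out family and then lifting an algebraic representative back via the deformation theory of free rational curves. The case $K=\ol{\mathbb{F}_p}$ is already Schoen, so one may assume $K$ has positive transcendence degree over its prime field (this includes all characteristic-$0$ situations).

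Concretely, I would spread $X$ out to a smooth projective morphism $\pi:\mcX\to S$ with geometrically rationally connected fibers, where $S$ is smooth integral of finite type over $\SP\ZZ$ and $X$ is the geometric generic fiber. After shrinking $S$, rational connectedness persists on every geometric fiber. Pick a closed point $s_0\in S$ whose residue field lies in $\ol{\mathbb{F}_p}$ for some prime $p\neq \ell$, and use proper-smooth base change to obtain a specialization isomorphism
\[
\text{sp}:H^{2d-2}(X,\ZZ_\ell(d-1))\xrightarrow{\sim} H^{2d-2}(\mcX_{s_0},\ZZ_\ell(d-1)).
\]
Given a class $\alpha$ upstairs, Schoen's theorem (whose only input is the assumed Tate conjecture for divisors on surfaces) produces curves $C_i\subset\mcX_{s_0}$ with $\text{sp}(\alpha)=\sum n_i[C_i]$. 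Using separable rational connectedness of $\mcX_{s_0}$, I would then replace each $C_i$ by an integer combination of free rational curves via the Koll\'ar--Miyaoka--Mori comb construction: attach sufficiently many very free rational arms $R_{i,j}$ at general points of $C_i$, smooth the resulting comb to a free rational curve $\wht{C}_i\subset\mcX_{s_0}$, and read off $[C_i]=[\wht{C}_i]-\sum_j[R_{i,j}]$ in $CH_1(\mcX_{s_0})$. Each free rational curve $f_0:\PP^1\to\mcX_{s_0}$ has $H^1(\PP^1,f_0^{*}T_{\mcX_{s_0}})=0$, so $\text{Mor}(\PP^1,\mcX/S)$ is smooth and dominant over $S$ at $[f_0]$; after an \'etale base change, $f_0$ lifts to a free rational curve on the geometric generic fiber whose cycle class specializes to $[f_0]$. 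Summing the lifts produces a $\ZZ$-linear combination of curves on $X$ whose $\ell$-adic class equals $\alpha$, because $\text{sp}$ is an isomorphism. The singular-cohomology version over $\CC$ follows from the $\ell$-adic one by comparing Betti and \'etale cycle class maps after an additional spreading-out to a subring of $\CC$ finitely generated over $\ZZ$.

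The hard part is the comb-and-smoothing step, which has to work \emph{integrally}: every curve class on $\mcX_{s_0}$, not merely a $\QQ$-multiple, must be expressible as an integer combination of classes of free rational curves. This requires separable rational connectedness of $\mcX_{s_0}$ so that enough very free rational curves exist through every general point to make combs smoothable, together with a careful verification that the smoothing produces a free rational curve whose class is exactly $[C_i]+\sum_j[R_{i,j}]$, with no torsion defect. A secondary technical issue is guaranteeing that the chosen special fiber $\mcX_{s_0}$ is actually separably rationally connected over its residue field; I would arrange this by combining the deformation invariance of (separable) rational connectedness with a descent from the geometric generic fiber, at the cost of restricting $s_0$ to lie in a suitable dense open subset of $S$.
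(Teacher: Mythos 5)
Your overall strategy---spread out, specialize to a positive-characteristic fiber, invoke Schoen's theorem, and then lift cycle classes back via deformation theory of free curves---is precisely the one used in Voisin's cited paper, so the architecture is right. However, the lifting step contains a genuine error: the smoothing $\wht{C}_i$ of a comb whose handle is $C_i$ has arithmetic genus $g(C_i)$, since attaching rational teeth at nodes does not change the arithmetic genus. Schoen's theorem gives no control on the genera of the $C_i$, so $\wht{C}_i$ is \emph{not} a rational curve in general, and the smoothness and dominance of $\text{Mor}(\PP^1,\mcX/S)$ over $S$ therefore says nothing about lifting $\wht{C}_i$. This is exactly the ``integral'' delicacy you flagged but did not resolve.

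There are two standard ways to repair this. The first is Voisin's: observe that by attaching enough very free teeth one obtains an immersed free curve $\wht{C}_i$ (of genus $g(C_i)$, not zero) with $H^1(\wht{C}_i,f^*T_{\mcX_{\bar s_0}})=0$, and lift it using the relative space of stable maps, or the relative Hilbert scheme, rather than $\text{Mor}(\PP^1,-)$; the $H^1$-vanishing for the positive-genus smoothed comb is the nontrivial verification. The second is to first invoke the theorem of Tian and Zong that $CH_1$ of a smooth projective separably rationally connected variety over an algebraically closed field is generated by classes of rational curves; this lets you replace each $C_i$ by an integer combination of rational curves \emph{before} running the comb construction, after which the smoothing genuinely is rational and your $\text{Mor}(\PP^1,\mcX/S)$ argument applies verbatim. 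With either repair, the remaining bookkeeping (specialization isomorphism on the geometric fiber $\mcX_{\bar s_0}$, openness of \emph{separable} rational connectedness on $S$, the observation that for rationally connected fibers every class in $H^{2d-2}(\ZZ_\ell(d-1))$ is a Tate class so Schoen applies to all of $\mathrm{sp}(\alpha)$, and the descent from $\ell$-adic to Betti via the comparison isomorphism and finiteness of the cokernel) is essentially correct and matches the cited argument.
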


The failure of the integral Hodge/Tate conjecture for one cycles is captured by Kato homology.

For the two homology theories in Example \ref{ex:homology} (and others), Bloch and Ogus prove the following theorem:
\begin{thm}\cite{BlochOgus}
Let $X$ be a smooth variety of dimension $d$ over an algebraically closed field $k$ and let $\mathcal{H}_n$ be the Zariski sheaf associated with the presheaf $U \mapsto H_j(U), 0 \leq j \leq d$, where $H_j(\bullet)$ is a homology theory.
Then there is a resolution by flasque sheaves:
\[
0 \to \mathcal{H}_j \to \oplus_{x \in X_{(d)}} {i_x}_* H_j(x) \to \oplus_{x \in X_{(d-1)}} {i_x}_* H_{j-1}(x) \to \ldots \to \oplus_{x \in X_{(d-j)}} {i_x}_* H_0(x).
\]
\end{thm}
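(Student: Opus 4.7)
The natural approach is to derive the resolution from the niveau spectral sequence of Section \ref{subsec-Kato} by sheafifying it on the small Zariski site of $X$. For every open $U \subseteq X$, the niveau spectral sequence has $E^1_{p,q}(U) = \bigoplus_{x \in U_{(p)}} H_{p+q}(x)$ with $H_{p+q}(x) = \varinjlim_{V \subseteq \overline{\{x\}}} H_{p+q}(V)$, and the presheaves $U \mapsto E^1_{p,q}(U)$ sheafify to $\bigoplus_{x \in X_{(p)}} (i_x)_* H_{p+q}(x)$. Each summand $(i_x)_* H_{p+q}(x)$ is zero on opens missing $\overline{\{x\}}$ and takes the constant value $H_{p+q}(x)$ on opens meeting it, its restriction maps being either identity or zero; hence it is flasque, and so is the direct sum. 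This already gives the claim that the terms of the putative resolution are flasque.

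Fix $j$ and consider the row of the $E^1$-page with $p+q = j$. After sheafifying it becomes precisely the complex in the statement, so to prove exactness it suffices to check on stalks, i.e.\ after passing to $\mathrm{Spec}\,\mathcal{O}_{X,x}$ for every $x \in X$. What must be shown is that for the local smooth scheme $\mathrm{Spec}\,\mathcal{O}_{X,x}$, the complex
\[
\bigoplus_{y \in (\mathrm{Spec}\,\mathcal{O}_{X,x})_{(d)}} H_j(y) \to \bigoplus_{y \in (\mathrm{Spec}\,\mathcal{O}_{X,x})_{(d-1)}} H_{j-1}(y) \to \cdots
\]
is exact except at the leftmost spot, and that its kernel there is $\mathcal{H}_j$ at $x$, which equals $\varinjlim_{x \in U} H_j(U)$.

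The main obstacle, and the heart of the Bloch--Ogus argument, is the \emph{effacement theorem}: on a smooth variety any homology class supported on a proper closed subvariety is killed after restriction to a sufficiently small open neighborhood of a chosen point. For the two homology theories of Example \ref{ex:homology}, effacement follows from Poincar\'e duality $H_{2d-k}(U) \cong H^k(U)$ on smooth $U$ of pure dimension $d$ together with cohomological purity / Gysin for smooth closed pairs: a class supported on $Z \subseteq U$ of codimension $c$ is the Gysin image of a class from $H^{k-2c}(Z)$ and can be killed by restricting to a general hyperplane section meeting $Z$ properly at the chosen point. Granting effacement, the standard diagram chase of Bloch--Ogus shows that the rows of the $E^1$-page of the local niveau spectral sequence at $\mathrm{Spec}\,\mathcal{O}_{X,x}$ are exact except at the leftmost position, while comparing the induced filtration on $H_j(\mathrm{Spec}\,\mathcal{O}_{X,x})$ with $\varinjlim_{x \in U} H_j(U)$ identifies the remaining kernel with $\mathcal{H}_j$ at $x$. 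This yields the desired flasque resolution.
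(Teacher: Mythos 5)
The paper does not prove this theorem; it is stated purely as a citation to Bloch--Ogus, so there is no ``paper's proof'' to compare against. Your overall skeleton -- sheafify the niveau spectral sequence, observe that the $E^1$ terms sheafify to flasque sheaves of the form $\oplus_{x}(i_x)_*H_{p+q}(x)$, and reduce exactness to a local statement established via effacement -- is precisely the Bloch--Ogus strategy, and that part of the write-up is sound.

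However, your sketch of the effacement theorem itself contains a genuine gap. You claim that for the homology theories of Example \ref{ex:homology} effacement follows because a class supported on a closed $Z\subseteq U$ of codimension $c$ is a Gysin image from $H^{k-2c}(Z)$ and ``can be killed by restricting to a general hyperplane section meeting $Z$ properly at the chosen point.'' This does not work as stated: a hyperplane section is closed, not open, so it is not the kind of restriction needed; and even if one removes a hyperplane, that does not kill a Gysin class supported on $Z$. Moreover $Z$ need not be smooth, so purity in the form you invoke does not directly apply. The actual Bloch--Ogus effacement argument (their Theorem 4.2, modeled on Quillen's proof of Gersten's conjecture for $K$-theory) is a geometric presentation lemma: after shrinking to an affine neighborhood of the chosen point, one constructs a suitable projection $\pi\colon U\to \mathbb{A}^{d-1}$ (or to a smooth $(d-1)$-fold) that is finite on $Z$, and then uses this fibration together with the localization and homotopy properties of the homology theory to produce a larger closed subset $Z'$ and to show that the map $H^Z_\bullet(U)\to H^{Z'}_\bullet(U)$ vanishes after shrinking $U$; no hyperplane-section/Gysin trick replaces this. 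If you intend to prove effacement rather than cite it, you would need to reproduce that geometric construction, together with the verification that the given homology theories satisfy the Bloch--Ogus axioms (most importantly the homotopy invariance needed for the presentation step).
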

This theorem implies an analogue of the Bloch formula for Chow groups.
If $X$ is a smooth variety defined over an algebraically closed field and we take homology with $\ZZ/n\ZZ$-coefficient,
\[
CH_i(X)/n \cong H_0(X, \mathcal{H}_{d-i}).
\]
If $X$ is a complex variety and we use the Borel-Moore homology with $\ZZ$-coefficient,
\[
CH_i(X)/\text{alg} \cong H_0(X, \mathcal{H}_{d-i}).
\]
By the above theorem and the Leray spectral sequence computing $H_n(X, A)$, we have the following isomorphisms for a smooth variety $X$:
\[
KH_0(X, A) \cong H_0(X, A),
\]
\[
KH_1(X, A) \cong H_1(X, A),
\]
and the long exact sequence
\[
H_3(X, A) \to KH_3(X,  A) \to(\text{CH}_1(X)/\text{alg}) \otimes A \to H_2(X, A) \to KH_2(X, A) \to 0.
\]
 
 In particular, when $X$ is a smooth projective complex variety and we take the Borel-Moore homology with $\ZZ$-coefficients, the torsion of $KH_2(X)$ measures the failure of the integral Hodge conjecture for one cycles on $X$. In general, for any smooth projective variety $X$ over an algebraically closed field, if $KH_2(X, A)=0$, then every class of $H_2(X, A)$ is algebraic. For later application, we need to generalize this to the case of a simple normal crossing variety.

\begin{lem}\label{lem:KatoSNC}
If $KH_2(D, A)=0$ for a simple normal crossing projective variety $D=\cup_{i \in I} D_i$, then $H_2(D, A)$ is generated by classes of algebraic curves.
\end{lem}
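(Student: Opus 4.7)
The plan is to run the simplicial spectral sequence from property P4 in parallel with its analogue for ordinary (Borel--Moore) homology, and then use the Bloch--Ogus comparison on smooth strata to transfer the vanishing hypothesis $KH_2(D,A)=0$ back to the homological side.

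Write $E_p=\bigsqcup_{|I|=p+1}D_I$ with $D_I=\bigcap_{i\in I}D_i$, so each $E_p$ is smooth and projective. Property~P4 supplies the Kato spectral sequence $K^1_{p,q}=KH_q(E_p,A)\Rightarrow KH_{p+q}(D,A)$, and the same simplicial descent argument applied to the localization sequences of $H_\bullet$ produces $M^1_{p,q}=H_q(E_p,A)\Rightarrow H_{p+q}(D,A)$. On each smooth $E_p$, the results recalled in Section~\ref{sec-IHC} give $H_q(E_p,A)\cong KH_q(E_p,A)$ for $q\in\{0,1\}$ together with a surjection $H_2(E_p,A)\twoheadrightarrow KH_2(E_p,A)$ whose kernel is $\mathrm{Alg}_p:=\mathrm{image}(CH_1(E_p)\otimes A\to H_2(E_p,A))$. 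Because pushforward along the simplicial face maps sends algebraic $1$-cycles to algebraic $1$-cycles, these assemble into a morphism of spectral sequences $\Phi\colon M\to K$ which is an isomorphism on the rows $q=0,1$ and a surjection on the row $q=2$.

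The assumption $KH_2(D,A)=0$ forces the three graded pieces $K^\infty_{p,2-p}=0$ for $p\in\{0,1,2\}$, and I would analyze the three positions of the homological filtration in turn. At $(p,2-p)\in\{(1,1),(2,0)\}$, every differential $d_r$ incident to the position has both source and target in rows $q\le 1$, where $\Phi$ is already an isomorphism, so $\Phi$ induces an isomorphism $M^\infty_{p,2-p}\cong K^\infty_{p,2-p}=0$. At $(0,2)$, the subspaces $\mathrm{Alg}_p$ form a subcomplex $\mathrm{Alg}^1_{\bullet,2}$ of the $q=2$ row of $M^1$ under the horizontal differential (pushforward of an algebraic $1$-cycle is an algebraic $1$-cycle), yielding a short exact sequence $0\to\mathrm{Alg}^1_{\bullet,2}\to M^1_{\bullet,2}\to K^1_{\bullet,2}\to 0$ of row complexes. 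All higher incoming differentials $d_r$ into $(0,2)$ for $r\ge 2$ originate from positions $(r,3-r)$ lying in rows $q\le 1$ where $\Phi$ is an isomorphism; hence any $K$-side boundary killing a class in $K^\infty_{0,2}$ lifts through $\Phi$ to an $M$-side boundary. The kernel of the surjection $M^\infty_{0,2}\twoheadrightarrow K^\infty_{0,2}=0$ is therefore generated by the images of the $\mathrm{Alg}_p$'s, i.e., by classes of algebraic $1$-cycles on components of $D$.

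Putting the three cases together, the only possibly nonzero graded piece of the filtration on $H_2(D,A)$ is $M^\infty_{0,2}$, and it is generated by pushforwards of algebraic $1$-cycles from the components $D_i$; since any curve inside $D_i$ is in particular a curve in $D$, the lemma follows. The main bookkeeping hazard lies in the final kernel computation: one must carefully verify that no ``hidden'' non-algebraic class arises in $\ker(\Phi)$ at the $E^\infty$-stage from higher differentials on the Kato side, which is exactly where the fact that $\Phi$ is an isomorphism on rows $q=0,1$ (and not merely a surjection) becomes indispensable, as it allows every cancellation on the Kato side to be lifted to an actual cancellation on the homological side.
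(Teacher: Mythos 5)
Your proof is correct and follows the same strategy as the paper's: run the simplicial (Mayer--Vietoris) spectral sequence for $KH_\bullet$ and for $H_\bullet$ in parallel, use the Bloch--Ogus comparison on the smooth strata $E_p$ to get an isomorphism of the $q=0,1$ rows and a surjection on the $q=2$ row with kernel the algebraic classes, deduce $M^\infty_{1,1}=M^\infty_{2,0}=0$ from $KH_2(D,A)=0$, and then chase to show $M^\infty_{0,2}=H_2(D,A)$ is hit by the image of $\mathrm{Alg}_0$. The one place the paper is terse (``a diagram chasing taking into account of the compatibility proves the statement'') is precisely what you spell out; your lifting argument for the $d_2$- and $d_3$-boundaries via the row-$q\le 1$ isomorphisms is the right mechanism, and the observation that the $\mathrm{Alg}_p$ form a subcomplex under the face-map pushforwards is what makes the $d_1$-step go through.
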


\begin{proof}
We have a map $H_a(X, A) \to KH_a(X, A)$ that is compatible with respect to proper push-forward and pull-back via open immersions.
Thus we have a commutative diagram of spectral sequences computing $KH_a(D, A)$ and $H_a(D, A)$ induced by these maps.

Since $KH_a(D_i, A)\cong H_a(D_i, A)$ for $a=0, 1$, in the $E^1$ page, $\{E^1_{p, a}\}$ of the spectral sequence abutting to $KH(D, A)$ and $H(D, A)$ are the same for $a=0, 1$. 
Thus for any $p$, $E^2_{p, 0}, E^2_{p, 1}, E^3_{p, 0}, E^3_{1, 1}$ are the same for both spectral sequences. 
As a result, $E^\infty_{1, 1}, E^\infty_{2, 0}$ are the same for both spectral sequences.
The assumption $KH_2(D, A)=0$ implies that $E^\infty_{1, 1}=E^\infty_{2, 0}=0$ for both spectral sequences and $E^\infty_{0, 2}=0$ for the spectral sequence computing $KH(D, A)$. 
Then a diagram chasing taking into account of the compatibility proves the statement.
\end{proof}

\section{Minimal model program}\label{sec-MMP}
A crucial ingredient in the proof of Theorem \ref{thm:Laurant} is the minimal model program (MMP). Here we recall some basic definitions and results that are used in the proof. 

Let $X$ be a normal variety over an algebraically closed field of characteristic $0$, and let $\Delta$ be
an effective $\QQ$-Weil divisor on X such that $K_X +\Delta$ is $\QQ$-Cartier. Let $f : Y \to X$ be a log resolution of
$(X, \Delta)$. This means that
\begin{enumerate}
\item $Y$ is smooth and $f$ is proper, birational.
\item Let $\{E_i\}$ be the set whose elements are all components of the exceptional divisors of $f$
and all components of the strict transform of $\Delta$, then the $E_i$
form a simple normal crossing
divisor.
\end{enumerate}

\begin{defn}
If we write 
\[
K_Y=f^*(K_X+\Delta)+\sum a_i E_i,
\]
the number $a_i$ is call \emph{discrepancy} of $E_i$ with respect to $(X, \Delta)$, denoted by $a_i=a(E_i, X, \Delta)$.

The pair $(X, \Delta)$ is \emph{log canonical (lc)} (resp. \emph{Kawamata log terminal (klt)}) if $a(E_i, X, \Delta)\geq -1$ (resp. $>-1$) for all $E_i$.
\end{defn}

\begin{defn}\label{def:dlt}
The pair $(X, \Delta)$ is \emph{divisorial log terminal (dlt)}, if $\Delta=\sum a_i D_i$ with coefficients $0< a_i \leq 1$, and if there is a closed set $Z$ such that
\begin{enumerate}
\item $X$ is smooth away from $Z$ and $\Delta|_{X-Z}$ is a simple normal crossing divisor.
\item For any birational morphism $f: Y \to X$, and any divisor $E \subset Y$ whose center is contained in $Z$, $a(E, X, \Delta)>-1$.
\end{enumerate}
\end{defn}

\begin{ex}
Let $X$ be a smooth variety and $\Delta=\sum a_i D_i, 0<a_i \leq 1$ be an effective $\QQ$-divisor whose support is simple normal crossing. Then $(X, \Delta)$ is a dlt pair (also an lc pair).
If $0 < a_i <1$, then $(X, \Delta)$ is klt.
\end{ex}

The property of being lc/klt/dlt is preserved under MMP (\cite[Corollary 3.43, 3.44]{KM98}). 
Thus dlt pairs comes naturally when one runs the MMP starting from a smooth variety with simple normal crossing divisors and it is good for inductions.
The following proposition justifies the occasional change of assumptions from dlt to klt (e.g. in $\textbf{R}(n, k)$).

\begin{prop}\cite[Proposition 2.43]{KM98}\label{prop:dltklt}
Let $(X, \Delta)$ be a quasi-projective dlt pair and $H$ an ample divisor. 
Let $\Delta_1$ be an effective divisor $\QQ$-Weil divisor such that $\Delta-\Delta_1$ is effective. 
Then there is a rational number $c>0$ and an effective $\QQ$-divisor $D$ such that $D \equiv_{\text{num}} \Delta_1+cH$ and $(X, \Delta-\epsilon\Delta_1+\epsilon D)$ is klt for all sufficiently small rational number $\epsilon$.
\end{prop}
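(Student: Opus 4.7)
The plan is to produce $D$ as a general member of a very ample linear system whose numerical class matches $\Delta_1 + cH$, and then verify klt-ness discrepancy-by-discrepancy on a log resolution. Since $H$ is ample and the ample cone is open in $N^1(X)_\QQ$, for all sufficiently large $c \in \QQ_{>0}$ the class $[\Delta_1] + c[H]$ is ample, so for such a $c$ and a suitable integer $m$, $L := m(\Delta_1 + cH)$ is a very ample integral Cartier divisor. Applying Bertini to $|L|$, I would choose a general member $D_0 \in |L|$ that contains no component of $\operatorname{Supp}(\Delta)$ and meets $\Delta$ transversely over the snc locus of $(X,\Delta)$. Setting $D := \tfrac{1}{m} D_0$ then gives an effective $\QQ$-divisor with $D \equiv_{\text{num}} \Delta_1 + cH$.

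To check klt-ness of $(X, \Delta - \epsilon\Delta_1 + \epsilon D)$, fix a log resolution $\pi : Y \to X$ of $(X, \Delta + \Delta_1 + D)$. For every prime divisor $E \subset Y$ one has
\[
a(E, X, \Delta - \epsilon\Delta_1 + \epsilon D) \;=\; a(E, X, \Delta) + \epsilon\bigl(\operatorname{mult}_E(\pi^*\Delta_1) - \operatorname{mult}_E(\pi^*D)\bigr),
\]
and only finitely many $E$ contribute a nonzero correction. Consequently, for every $E$ with $a(E, X, \Delta) > -1$ the new discrepancy remains strictly above $-1$ once $\epsilon$ is small enough. By Definition \ref{def:dlt}, after choosing $\pi$ to be an isomorphism over the snc locus, the remaining $E$ with $a(E, X, \Delta) = -1$ are precisely the strict transforms of components $F$ of $\Delta$ with $\operatorname{mult}_F(\Delta) = 1$ lying in the snc locus; for such $E$, generality of $D_0$ forces $\operatorname{mult}_F(D) = 0$, and the new discrepancy equals $-1 + \epsilon\operatorname{mult}_F(\Delta_1)$.

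The delicate point, and what I view as the main obstacle, is that this last quantity must be strictly greater than $-1$, i.e., that $\operatorname{mult}_F(\Delta_1) > 0$ at every coefficient-$1$ component $F$ of $\Delta$ in the snc locus. This is the sense in which $\Delta_1$ must ``cover'' the coefficient-$1$ part of $\Delta$; it is automatic in the subsequent applications where $\Delta_1 \geq \lfloor \Delta \rfloor$, and if some coefficient-$1$ component were omitted from $\Delta_1$ no perturbation of this form could lower its coefficient below $1$, so some such hypothesis on $\Delta_1$ is unavoidable. Granting this positivity, it then suffices to take $\epsilon$ smaller than $\min_F \operatorname{mult}_F(\Delta_1)$ and smaller than the analogous bounds $(a(E, X, \Delta) + 1)/\operatorname{mult}_E(\pi^*D)$ coming from the finitely many other contributing $E$. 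The remaining technical step is the Bertini argument producing $D_0$ with the required transversality on the possibly singular $X$; this is standard once one restricts to the smooth snc locus (of codimension $\geq 2$ complement) and applies the classical Bertini theorem to the base-point-free $|L|$ restricted there.
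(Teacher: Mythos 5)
Your argument is the one in \cite[Proposition 2.43]{KM98}: take a general Bertini member $D$ of the ample system $|m(\Delta_1+cH)|$ for $c\gg 0$ (this implicitly uses that $X$, in the paper's applications, is $\QQ$-factorial, so that $m(\Delta_1+cH)$ is Cartier for suitable $m$), then verify klt on a fixed log resolution of $(X,\Delta+\Delta_1+D)$ by affine-linearity of discrepancies in the boundary, noting that only the finitely many divisors on that resolution contribute a nonzero correction. You have also put your finger on the genuine subtlety: as quoted here, the hypothesis on $\Delta_1$ is too weak. If $F\subset\operatorname{Supp}\Delta$ has coefficient $1$ and $\operatorname{mult}_F\Delta_1=0$, then for every effective $D$ the coefficient of $F$ in $\Delta-\epsilon\Delta_1+\epsilon D$ is at least $1$, so no perturbation of this form can be klt. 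The positivity you insist on --- $\operatorname{mult}_F\Delta_1>0$ on every coefficient-one component $F$ of $\Delta$ --- is precisely the missing hypothesis; the statement in Koll\'ar--Mori carries the condition $\lfloor\Delta\rfloor\leq\Delta_1$ (equivalently, that $(X,\Delta-\Delta_1)$ is klt), which the paper's quotation has dropped. As you observe, this is automatic in the intended applications, where one always takes $\Delta_1\supseteq\lfloor\Delta\rfloor$, so the downstream uses of the proposition are unaffected. With that hypothesis restored, the rest of your argument (Bertini transversality over the snc locus, and the bound on $\epsilon$ from the finitely many divisors with $\operatorname{mult}_E(\pi^*\Delta_1)\neq\operatorname{mult}_E(\pi^*D)$) is complete.
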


\begin{defn}
Let $(X, \Delta)$ be an lc pair. A subvariety $Z \subset X$ is an \emph{lc center} if there is a proper birational morphism  $f: Y \to X$, a prime divisor $E\subset Y$  with discrepancy $a(E, X, \Delta)=-1$, and $Z=f(E)$.
\end{defn}

For a smooth variety $X$ with a simple normal crossing boundary divisor $D= \sum_i a_i D_i (0< a_i \leq 1)$, the lc centers are precisely the irreducible components of intersections of those $D_i$'s with coefficient equal to $1$.
 We have the following result for the description of lc centers on a dlt pair, which generalizes this fact.
\begin{thm}[{\cite[Proposition 3.9.2]{FujinoLogTerminal}}]\label{thm:Fujino}
Let $(X, D)$ be a dlt pair and $D_1, \ldots, D_r$ the irreducible components that appear with coeffiecient $1$.
\begin{enumerate}
\item The $s$-codimensional lc centers of $(X, \Delta)$ are exactly the irreducible components of the various intersections $D_{i_1} \cap \cdots D_{i_s}$ for $\{i_1, \ldots, i_s\} \subset \{1, \ldots, r\}$.
\item Every irreducible component of $D_{i_1} \cap \cdots D_{i_s}$ is normal and of pure codimension $s$.
\item Let $Z\subset X$ be any lc center. Assume that $D_i$ is $\QQ$-Cartier for some $i$ and $Z \not\subset D_i$. Then every irreducible component of $D_i|_Z$ is also $\QQ$-Cartier.
\item For each irreducible lc center $Z$, there is an effective $\QQ$-divisor $\Delta$ on $Z$ such that $(Z, \Delta+\sum_{Z \not\subset D_i} D_i)$ is dlt and $K_Z +\Delta \sim_\QQ (K_X+D)|_Z$.
\end{enumerate}
\end{thm}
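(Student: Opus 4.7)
The plan is to reduce everything to the smooth simple normal crossing picture and then propagate by induction on codimension via adjunction. By Definition \ref{def:dlt}, fix a closed subset $Z_0 \subset X$ such that $U := X\setminus Z_0$ is smooth, $D|_U$ is simple normal crossing, and every divisor $E$ over $X$ with center contained in $Z_0$ satisfies $a(E,X,D) > -1$. The entire theorem will be deduced by first establishing the claims on $U$ and then patching.

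For part (1), if $W$ is an lc center then $W = f(E)$ for some prime divisor $E$ with $a(E,X,D) = -1$; the dlt hypothesis then forces the center of $E$ on $X$ to meet $U$, so $W \cap U \neq \emptyset$. Over the SNC pair $D|_U$, the lc centers are explicitly the irreducible components of intersections $D_{i_1} \cap \cdots \cap D_{i_s}$ of coefficient-$1$ components, so taking closures recovers the statement. Part (2) then asks for normality and pure codimension of these intersections globally. Pure codimension is again read off on $U$ together with the fact that any excess component would sit in $Z_0$ and be forbidden by the dlt condition. For normality I would argue by induction on $s$: the case $s=1$ reduces to normality of each $D_i$, which follows from Serre's criterion since $R_1$ holds on $U$ and $S_2$ follows from Cohen--Macaulayness of dlt pairs in characteristic zero (Kollár--Mori, via Elkik). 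The inductive step uses adjunction to endow $D_1$ with a dlt structure and then applies the inductive hypothesis to the components of $D_2 \cap \cdots \cap D_s$ inside $D_1$.

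For part (4), standard adjunction produces an effective $\QQ$-divisor $\Delta_Z$ on $Z$, the ``different'', with $K_Z + \Delta_Z \sim_\QQ (K_X + D)|_Z$; one then checks that $(Z, \Delta_Z + \sum_{Z \not\subset D_i} D_i|_Z)$ is dlt by lifting a log resolution of $Z$ to a log resolution of $X$ that is toroidal near $Z$, so that all discrepancy computations transfer. Part (3) is then immediate: the $D_i|_Z$ appear in the dlt boundary constructed in (4), and near the generic point of each of its components the dlt pair is $\QQ$-factorial, forcing those components to be $\QQ$-Cartier on the normal variety $Z$.

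The main obstacle will be establishing normality in (2) together with the compatibility of the dlt condition under restriction asserted in (4). Both rest on two nontrivial inputs that one must borrow from the general theory: Cohen--Macaulayness of dlt singularities in characteristic zero, and effectivity of the different on a dlt lc center. Once these inputs are taken for granted, the remainder of the argument is a bookkeeping exercise stratifying $X$ by codimension of lc centers and applying induction.
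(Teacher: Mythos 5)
The paper does not prove Theorem \ref{thm:Fujino}; it is imported verbatim from \cite[Proposition~3.9.2]{FujinoLogTerminal}, so there is no internal argument to compare against. Your sketch follows the standard shape---restrict to the open SNC locus $U$, take closures, propagate by adjunction---but two steps do not hold up.

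For normality in part (2), you invoke Serre's criterion with $S_2$ coming from Cohen--Macaulayness of dlt pairs and $R_1$ coming from smoothness on $U$. The $S_2$ half is fine, but $R_1$ is exactly what is in doubt: smoothness of $D_i\cap U$ only gives $\mathrm{Sing}(D_i)\subset D_i\cap Z_0$, and nothing in Definition~\ref{def:dlt} prevents $D_i\cap Z_0$ from being a divisor in $D_i$, so the non-smooth locus could a priori have codimension one. The classical proof (Koll\'ar--Mori, Cor.~5.52) does not use Serre's criterion at this point; it passes to a log resolution, applies the Shokurov--Koll\'ar Connectedness Lemma to show the induced map onto $D_i$ has connected fibers, and concludes that the normalization $D_i^\nu\to D_i$ is a bijection, hence (using $S_2$) an isomorphism. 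You would need that connectedness input in place of the $R_1$ claim, and the same issue propagates into your induction on $s$.

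For part (3), the inference ``near the generic point of each component the pair is $\QQ$-factorial, forcing those components to be $\QQ$-Cartier on $Z$'' does not go through. Since $Z$ is normal by (2), every prime divisor on $Z$ is automatically Cartier at its own generic point; being $\QQ$-Cartier is a global condition on $Z$ and can perfectly well fail along a smaller closed subset even when it holds generically, which is the entire content of the statement. So the generic-point observation does no work, and one genuinely needs to feed the hypothesis that $D_i$ is $\QQ$-Cartier on all of $X$ through the adjunction formalism, as in Fujino's argument. With these two repairs, the reduction to the SNC locus in (1) and the use of the different in (4) are the right ideas and match the standard treatment.
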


\begin{thm}[Koll\'ar-Shokurov connectedness theorem]\label{thm:connected}
Let $(X, \Delta)$ be a dlt pair and let $W\subset X$ be an lc center. Let $f: X \to Y$ a birational proper morphism to a normal variety such that $-(K_X+\Delta)$ is $f$-ample. Assume that $W$ is contained in the exceptional locus of $f$. Denote by $Z$ the image of $W$. Then $f_*(\OO_W)=\OO_Z$. In particular, $Z$ is normal and $W \to Z$ has connected fibers.
\end{thm}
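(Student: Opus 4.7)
The plan is to combine divisorial adjunction on the lc center $W$ with relative Kawamata--Viehweg vanishing, and then to use the rational connectedness of klt log Fano fibers to conclude geometric connectedness of the generic fiber of $W\to Z$.

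\textbf{Adjunction and vanishing.} By Theorem \ref{thm:Fujino}(2), $W$ is normal. Theorem \ref{thm:Fujino}(4) produces an effective $\QQ$-divisor $\Delta_W$ on $W$ such that $(W, \Delta_W + \sum_{W\not\subset D_i}D_i|_W)$ is dlt and $K_W + \Delta_W \sim_\QQ (K_X+\Delta)|_W$; writing $g := f|_W : W \to Z$, the $f$-ampleness of $-(K_X+\Delta)$ translates to $g$-ampleness of $-(K_W+\Delta_W)$. Applying Proposition \ref{prop:dltklt} and the openness of relative ampleness, I would replace $\Delta_W$ by a klt boundary $\Delta_W'$ for which $-(K_W + \Delta_W')$ remains $g$-ample. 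Now on a log resolution $\pi : \tilde W \to W$ of $(W, \Delta_W')$, write $K_{\tilde W} = \pi^*(K_W + \Delta_W') + \sum a_i E_i$ with $a_i > -1$, and set $A := \sum \lceil a_i \rceil E_i$: this is effective and $\pi$-exceptional, since strict transforms of boundary components have $a_i \in (-1,0]$ and hence $\lceil a_i \rceil = 0$. The key identity
\[
A - K_{\tilde W} \sim_\QQ -\pi^*(K_W + \Delta_W') + \sum (\lceil a_i \rceil - a_i) E_i
\]
exhibits $A - K_{\tilde W}$ as the sum of a $(g\circ\pi)$-nef and big $\QQ$-divisor and an effective SNC $\QQ$-divisor with coefficients in $[0,1)$. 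Relative Kawamata--Viehweg vanishing (applied to $g\circ\pi$ and to $\pi$ alone), together with $\pi_*\mathcal{O}_{\tilde W}(A) = \mathcal{O}_W$ coming from exceptionality of $A$ and normality of $W$, then yields via the Leray spectral sequence $R^i g_*\mathcal{O}_W = 0$ for all $i > 0$.

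\textbf{Connected fibers.} The geometric generic fiber of $g$ inherits from $\Delta_W'$ the structure of a klt log Fano variety, hence is rationally connected by Zhang's theorem (and its extensions by Hacon--McKernan), and in particular geometrically integral. Therefore $(g_*\mathcal{O}_W)_\eta = K(Z)$, and since $W$ is normal, $g_*\mathcal{O}_W$ is an integrally closed, finite $\mathcal{O}_Z$-algebra of generic rank one, so $g_*\mathcal{O}_W = \mathcal{O}_{\tilde Z}$, the structure sheaf of the normalization $\tilde Z \to Z$. In particular the general fibers of $g$ are connected and the Stein factorization is $W\to\tilde Z\to Z$ with the first map having connected fibers and the second finite birational.

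\textbf{Main obstacle.} The genuinely subtle step is to show $Z$ is itself normal, so that $\tilde Z = Z$ and $f_*\mathcal{O}_W = \mathcal{O}_Z$. My plan is to revisit the vanishing directly on $X$: push forward the short exact sequence $0 \to \mathcal{I}_W \to \mathcal{O}_X \to \mathcal{O}_W \to 0$ under $f$, and use a Kawamata--Viehweg type vanishing $R^1 f_*\mathcal{I}_W = 0$ (again powered by the $f$-ampleness of $-(K_X+\Delta)$ and set up by a log resolution of $(X,\Delta)$ with $W$ realized as a stratum of the dlt boundary) to force the natural map $\mathcal{O}_Y \to f_*\mathcal{O}_W$ to factor as $\mathcal{O}_Y \twoheadrightarrow \mathcal{O}_Z \xrightarrow{\sim} f_*\mathcal{O}_W$. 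Equivalently, one can appeal to the general MMP fact that the image of an lc center under a birational contraction of relative log Fano type is normal; in either case, the normality of $Z$ requires an extra vanishing input beyond the one used to establish connectedness of the fibers of $g$.
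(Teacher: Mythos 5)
Your strategy --- adjunction onto the lc center $W$ via Theorem \ref{thm:Fujino}, relative Kawamata--Viehweg vanishing for $g=f|_W$, rational connectedness of the klt log Fano fibers --- is a sensible route to generic connectedness, and you are honest about where it stalls: you arrive at $g_*\mathcal{O}_W=\mathcal{O}_{\tilde Z}$ with $\tilde Z$ the normalization of $Z$, and normality of $Z$ itself remains open. That is a genuine gap. Note also that without normality you do not even get that \emph{all} fibers of $W\to Z$ are connected, only the fibers of $W\to\tilde Z$; if $\tilde Z\to Z$ identifies points (as for a normalization of a non-normal variety) some fibers of $g$ would be disconnected. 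Of your two proposed fixes, the second ("appeal to the general MMP fact that the image of an lc center under a birational contraction of relative log Fano type is normal") is circular --- that fact is exactly the theorem. The first ($R^1f_*\mathcal{I}_W=0$) is not a direct application of Kawamata--Viehweg when $W$ has codimension $\geq 2$, since $\mathcal{I}_W$ is not a line bundle; making it work would require replacing $\mathcal{I}_W$ by a multiplier-ideal-type sheaf cut out on a log resolution of $(X,\Delta)$, i.e.\ precisely the device you have not set up.

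The paper's proof is a one-line reference to \cite[Theorem 5.48]{KM98} together with the remark that the argument there proves more than connectedness. The mechanism is different from yours: the Kawamata--Viehweg vanishing is applied on a log resolution of the \emph{ambient} pair $(X,\Delta)$, not on $W$, and produces a surjection $\mathcal{O}_Y\twoheadrightarrow f_*\mathcal{O}_W$. Since $W$ is an integral normal variety, $f_*\mathcal{O}_W$ is a sheaf of integral domains, so this surjection realizes $f_*\mathcal{O}_W$ as the structure sheaf of a closed integral subscheme of $Y$ supported on $f(W)=Z$; hence $f_*\mathcal{O}_W=\mathcal{O}_Z$, and normality of $Z$ and connectedness of all fibers fall out simultaneously. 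Working globally on $X$ is what makes normality of $Z$ automatic; restricting to $W$ first, as you do, gives the connectedness half cheaply but loses the handle on $\mathcal{O}_Y\to f_*\mathcal{O}_W$ that delivers normality.
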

\begin{proof}
This follows the proof the connectedness theorem \cite[Theorem 5.48]{KM98}. Even though the statement in \emph{loc. cit.} is connectedness of fibers, the proof actually shows $f_*(\OO_W)=\OO_Z$.
\end{proof}

\begin{prop}\label{prop:dltbase}
Let $(X, \Delta) \to S$ be a quasi-projective dlt/klt pair over $S$ and $p: X \to Y$ be a contraction of $(K_X + \Delta)$-negative extremal ray over $S$. Let $Z$ be an irreducible lc center that is contained in the exceptional locus and $B \subset Y$ the image of $Z$. Then there is a boundary divisor $\Delta_B$ on $B$ such that $(B, \Delta_B)$ is dlt/klt.
\end{prop}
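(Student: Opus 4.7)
The plan is to descend the dlt structure along the morphism $p|_Z \colon Z \to B$ by combining the adjunction formula for lc centers (Theorem \ref{thm:Fujino}) with the Koll\'ar--Shokurov connectedness theorem (Theorem \ref{thm:connected}), and then transferring the resulting dlt structure from $Z$ to $B$ by a relative MMP. The klt case is essentially vacuous: in a klt pair every discrepancy is strictly greater than $-1$, so there are no proper lc centers sitting in the exceptional locus. (If one allows the degenerate case $Z=X$, then $B=Y$ and the conclusion is the standard fact that $\Delta_B := p_*\Delta$ makes $(Y,\Delta_B)$ klt, see \cite[Cor.~3.43, 3.44]{KM98}.) So the substantive content is the dlt case.

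First, I would invoke Theorem \ref{thm:Fujino}: writing the reduced boundary as $D_1,\dots,D_r$ and $Z$ as an irreducible component of some $D_{i_1}\cap\cdots\cap D_{i_s}$, iterated adjunction yields an effective $\QQ$-divisor $\Theta_Z$ on $Z$ such that
\[
(Z,\ \Theta_Z^{\mathrm{tot}}):=\bigl(Z,\ \Theta_Z+\sum_{Z\not\subset D_i} D_i|_Z\bigr)
\]
is dlt and satisfies $K_Z+\Theta_Z^{\mathrm{tot}}\sim_\QQ (K_X+\Delta)|_Z$. Since $Z$ lies in the exceptional locus of $p$ and $-(K_X+\Delta)$ is $p$-ample, Theorem \ref{thm:connected} gives $p_*\OO_Z=\OO_B$; in particular $B$ is normal and the induced morphism $p|_Z\colon Z\to B$ has geometrically connected fibers. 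By construction $-(K_Z+\Theta_Z^{\mathrm{tot}})$ is $p|_Z$-ample.

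The final and main step is to descend the dlt structure from $Z$ to $B$. I would run a relative $(K_Z+\Theta_Z^{\mathrm{tot}})$-MMP over $B$: since the log canonical class is $p|_Z$-anti-ample, every $p|_Z$-relative extremal ray is $(K_Z+\Theta_Z^{\mathrm{tot}})$-negative, so the program (each step preserving dlt by \cite[Cor.~3.43, 3.44]{KM98}) terminates at a Mori fibre space $Z'\to\tilde B$ over $B$. Connectedness of the fibres of $Z\to B$ is preserved along the MMP (only curves in fibres are contracted or flipped), forcing $\tilde B=B$. Applying the Ambro--Kawamata canonical bundle formula to this Mori fibre space then produces an effective $\QQ$-divisor $\Delta_B$ on $B$ making $(B,\Delta_B)$ dlt. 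The main obstacle is precisely this last descent: one must verify that the canonical bundle formula can be invoked in a dlt-compatible form and that the connectedness of fibres is preserved through every flip and divisorial contraction so that the Mori fibre space's base is $B$ itself. Everything else is a direct combination of the adjunction and connectedness statements already recalled in the paper.
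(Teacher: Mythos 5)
Your opening two steps (adjunction via Theorem~\ref{thm:Fujino} and normality/connectedness via Theorem~\ref{thm:connected}) are exactly the paper's, but your final descent from $Z$ to $B$ has a genuine gap. You run a relative $(K_Z+\Theta_Z^{\mathrm{tot}})$-MMP over $B$ and assert that the resulting Mori fibre space $Z'\to\tilde B$ must have $\tilde B=B$ because ``connectedness of the fibres of $Z\to B$ is preserved.'' That inference is false: the MMP indeed preserves connectedness of fibres of the structure map $Z'\to B$, but the base $\tilde B$ of the terminal Mori fibre space is not $B$ itself --- it is an intermediate contraction and can have $\dim B<\dim\tilde B<\dim Z$. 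The simplest illustration: take $Z=\PP^1\times\PP^1$, $\Theta_Z^{\mathrm{tot}}=0$, $B=\mathrm{pt}$. The anticanonical class is $p|_Z$-ample, but the relative MMP terminates immediately at a Mori fibre space $Z\to\PP^1\neq B$. Nothing in your argument rules out this behaviour, and without $\tilde B=B$ the canonical bundle formula says nothing about $B$.

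The paper avoids this entirely by a perturbation trick that you don't use: since $p$ is the contraction of a $(K_X+\Delta)$-negative extremal ray, the cone theorem produces an ample $\QQ$-Cartier divisor $H$ on $X$ with $K_X+\Delta+H\sim_\QQ p^*H_Y$ for some ample $H_Y$ on $Y$. Restricting to $Z$ gives $K_Z+\Delta_Z+H|_Z\sim_\QQ (p|_Z)^*(H_Y|_B)$, i.e.\ an lc-trivial fibration $Z\to B$ with no MMP needed. After perturbing $\Delta_Z$ to be klt, one applies Ambro's theorem directly to this lc-trivial fibration. This is the idea you are missing: rather than running MMP to manufacture a Fano fibration of relative Picard rank one, one adds a generic ample divisor to make the pair log-trivial over $B$ right away, which is exactly what Ambro's result is calibrated for. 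A second, smaller issue: you assert that the Ambro--Kawamata formula yields a \emph{dlt} boundary on $B$; it only gives klt. The paper addresses the dlt case separately by following the argument of \cite[Proposition~5.5]{XuHogadi} (observing that the existence of $H_Y$ replaces the $\QQ$-factoriality hypothesis there), and uses the klt version of Ambro for the klt-perturbed pair when dlt is not needed.
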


\begin{proof}
By Theorem \ref{thm:Fujino}, there is a divisor $\Delta_Z$ on $Z$ such that $K_Z+\Delta_Z\sim_\QQ K_X+\Delta$ and $(Z, \Delta_Z)$ is dlt. The image $B$ is normal by the connectedness theorem \ref{thm:connected}.  
Furthermore, the morphism $p|_Z: Z \to B$ is a $(K_Z+\Delta_Z)$-negative contraction. 
By the cone theorem \cite[Theorem 3.7]{KM98}, there is an ample $\QQ$-Cartier divisor $H$ on $X$ such that $K_X+\Delta_X+H$ is $\QQ$-linearly equivalent to the pull-back of an ample $\QQ$-Cartier divisor $H_Y$ on $Y$.

In the situation of a $(K_Z+\Delta_Z)$-negative contraction from a klt space, we know the base is also klt by \cite[Theorem 0.2]{Ambro_moduli_b}. For later applications in this article, this result already suffices since we may perturb $\Delta_Z$ so that $(Z, \Delta_Z)$ is klt.

In the dlt case, the proof follows the same lines as in the proof of \cite[Proposition 5.5]{XuHogadi}. Note that the assumption of $\QQ$-factoriality in \emph{loc. cit.} is not necessary once we know the existence of $H_Y$. 
\end{proof}

We need the following version of the existence of the Koll\'ar component (cf. \cite[Lemma 1, Remark 1]{XuFinitenessFundamentalGroup}).
\begin{lem}\label{lem:kollar_comp}
Let $(X, \Delta)$ be a quasi-projective klt pair and let $Z \subset X$ be a closed subvariety. There is a closed subset $W \subset Z$, a $\QQ$-divisor $H$ of $X$ and a $\QQ$-factorial normal variety $Y$ with birational projective morphism $f: Y \to U=X-W$ such that
\begin{enumerate}
\item 
There is a prime divisor $E \subset Y$ whose center in $U$ is $Z-W$, and $E$ is the only exceptional divisor for the morphism $f$. The divisors $-(K_Y+ f_*^{-1}(\Delta)+E), -E$ are relatively semiample. 
\item 
There is a divisor $\Delta_E$ on $E$ such that the pair $(E, \Delta_E)$ is klt. 
\item
The pair $(U, \Delta|_U+H|_U)$ is lc, and is klt away from $Z$. The divisor $E$ is the unique divisor such that $a(E, U, \Delta|_U+H|_U)=-1$.
\item The morphism $E \to Z$ is a rationally connected fibration.
\end{enumerate}
\end{lem}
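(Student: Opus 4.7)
The plan is to adapt Xu's construction of Koll\'ar components (\cite{XuFinitenessFundamentalGroup}) to this relative setting where the center is a subvariety $Z$ rather than a point; the role of $W$ will be to absorb the non-generic behavior of the construction over $Z$.

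First I would build an appropriate boundary $H$. Since $(X, \Delta)$ is klt, I can choose a $\QQ$-Cartier $\QQ$-divisor $H$ on $X$ whose support vanishes along $Z$ to sufficiently high order and whose coefficient is scaled so that the log canonical threshold satisfies $\mathrm{lct}(X, \Delta; H) = 1$ with a log canonical center generically equal to $Z$. After possibly shrinking to $U = X - W$ for a suitable closed $W \subsetneq Z$ to discard stray components of the non-klt locus, the pair $(U, \Delta|_U + H|_U)$ is lc, klt off $Z - W$, with all lc places lying over $Z - W$. A standard tie-breaking perturbation (replacing $H$ by $H + \epsilon H'$ for an auxiliary ample $\QQ$-divisor $H'$ and adjusting the coefficient) arranges that there is, up to birational equivalence, a unique lc place, giving the uniqueness statement in (3).

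Next, I would take a log resolution $\mu: X' \to U$ of $(U, \Delta|_U + H|_U)$ and write
\[
K_{X'} + \mu_*^{-1}(\Delta|_U + H|_U) + \sum_i E_i = \mu^*(K_U + \Delta|_U + H|_U) + \sum_i a_i E_i,
\]
with $a_i \geq 0$ and $a_i = 0$ precisely for the lc place $E_0$ selected above. Running a relative $(K_{X'} + \mu_*^{-1}(\Delta|_U + H|_U) + \sum_i E_i)$-MMP with scaling over $U$ contracts every $E_i$ with $a_i > 0$ (by negativity of discrepancies), and terminates with a $\QQ$-factorial model $f: Y \to U$ whose only exceptional divisor is $E_0 =: E$. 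The last step of the MMP is a $(K_Y + f_*^{-1}(\Delta) + E)$-negative extremal contraction, which produces the relative semiampleness of $-(K_Y + f_*^{-1}(\Delta) + E)$ and (since $E$ is contracted to $Z - W$) of $-E$, establishing (1) and the remaining part of (3). Applying adjunction along the plt divisor $E$ gives a boundary $\Delta_E$ with $(E, \Delta_E)$ klt, which is (2); here one uses Proposition \ref{prop:dltbase} or its klt analogue to ensure normality and pass $\QQ$-Cartierness to $E$.

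For (4), the general fiber of $E \to Z$ carries the restriction of $\Delta_E$ and of $-(K_Y + f_*^{-1}(\Delta) + E)$, making it a log Fano variety; by the Hacon--McKernan theorem on rational connectedness of log Fano fibrations, these fibers are rationally connected. The main technical obstacle I expect is the simultaneous achievement of two properties in the MMP step: that exactly one lc place survives \emph{and} that this place dominates $Z$. Arranging both requires a careful choice of the multiplicities in $H$ along $Z$ together with a tie-breaking perturbation, and it is precisely this freedom to shrink away a proper closed $W \subset Z$ that makes the construction flexible enough to succeed in general.
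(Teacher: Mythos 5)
Your overall strategy matches the paper's own, which is a pointer to Xu's Koll\'ar component construction \cite[Lemma 1]{XuFinitenessFundamentalGroup} with one modification: stop at a $\QQ$-factorial log minimal model instead of Xu's log canonical model. However, your justification of the relative semiampleness of $-(K_Y + f_*^{-1}(\Delta)+E)$ and $-E$ does not work as written. The MMP you run for $K_{X'} + \mu_*^{-1}(\Delta|_U + H|_U) + \sum_i E_i$ over $U$ is crepant to $(U, \Delta|_U+H|_U)$, so its log canonical divisor is $f$-trivial at every stage; its ``last step'' is $(K_{X'}+B')$-negative for the boundary $B'$ you chose, not a priori $(K_Y + f_*^{-1}(\Delta) + E)$-negative, and in any event a single extremal contraction step cannot yield semiampleness of a divisor on the terminal model $Y$.

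The mechanism you actually need is a second MMP over $U$, for the klt pair $(Y, f_*^{-1}(\Delta|_U + H|_U) + (1-\epsilon)E)$, whose log canonical divisor is $\sim_{\QQ, f}-\epsilon E$. Monotonicity of discrepancies shows $E$ is never contracted in this run: if it were, $a(E;\, Y,\, f_*^{-1}(\Delta|_U+H|_U)+(1-\epsilon)E)=\epsilon-1$ would have to be $\leq a(E;\, U,\, \Delta|_U+H|_U)=-1$, forcing $\epsilon\leq 0$. At the resulting $\QQ$-factorial log minimal model, $-\epsilon E$ is $f$-nef, hence $f$-semiample by the relative base-point-free theorem, and since $-(K_Y + f_*^{-1}(\Delta)+E)\sim_{\QQ, f}-cE$ with $c>0$, item (1) follows. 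This second MMP is exactly what the paper's phrase ``stop with a log minimal model'' refers to, in place of Xu's log canonical model (which makes $-E$ relatively ample but may destroy $\QQ$-factoriality). The rest of your outline --- tie-breaking for a unique lc place, shrinking by $W$, adjunction together with Proposition~\ref{prop:dltbase} to make $(E, \Delta_E)$ klt, and rational connectedness of the log Fano fibers of $E\to Z-W$ --- follows Xu's lemma as the paper intends.
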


\begin{proof}
This is essentially proved in \cite[Lemma 1]{XuFinitenessFundamentalGroup}. 
The only difference is that Xu assumes that $Z$ is a point and takes the log canonical model to make $-E$ relatively ample and $Y \to X$ an isomorphism away from $Z$ (which is a point). 
Here we would like to stop with a log minimal model and keep the $\QQ$-factoriality of $Y$, which is preserved by MMP when we start with a log resolution of $(X, \Delta)$. 
One can prove this lemma by following the argument in \emph{loc. cit.} almost word-by-word.
\end{proof}
\section{Minimal model program and Kato homology}\label{sec-MMP-KH}

In this section, we prepare ourselves for the proof of Theorem \ref{thm:induction} by studying the behavior of Kato homology in various situations in MMP.

Let $(X, \Delta)$ be a quasi-projective dlt pair. Denote by $\Delta^{=1}$ the union of irreducible components of $\Delta$ with coefficient $1$, $\{\Delta_i, i \in I=\{1, \ldots, m\} \}$ the irreducible components of $\Delta^{=1}$ , $\Delta_{i_0 \ldots i_d}=\cap_{i_0< \ldots <i_d \in I}\Delta_{i}$ and by $C^\bullet(\Delta^{=1}, k, A)$ the complex
\[
\oplus_{i \in I} KH_k(\Delta_i, A) \leftarrow \oplus_{i_0<i_1} KH_k(\Delta_{i_0 i_1}, A)\leftarrow \ldots \leftarrow \oplus_{i_0<\ldots<i_m} KH_k(\Delta_{i_0 \ldots i_m}, A)
\]

We first work out a simple example of blowing up along a smooth center.
In this example we allow a more general pair. 
\begin{defn}
Let $D$ be a $\QQ$-Weil divisor on a normal variety $X$. We say $D$ is a 
\textbf{subboundary divisor} if $D=\sum d_i D_i$ with $D_i$ the irreducible components of the support of $D$ and $d_i\leq 1$.
We say that $D$ is a \textbf{simple normal crossing subboundary divisor} if $D$ is a subboundary divisor, if $X$ is smooth, and if the support of $D$ is a simple normal crossing divisor.
\end{defn}
\begin{ex}\label{ex:smooth_blowup}
Let $(X, D)$ be a quasi-projective pair consisting of a smooth variety and a simple normal crossing subboundary divisor. Let $Z$ be a smooth subvariety of $X$ and $f: Y \to X$ the blow-up of $X$ along $Z$, and $E$ the exceptional divisor.  Denote by $D'$ the reduced part of the inverse image of $D^{=1}$. Assume that $Z$ intersects $D$ in a simple normal crossing way (that is, the union of $E$ and the strict transform of the support of $D$ is a simple normal crossing divisor). 

By the birational invariance and localization exact sequence, we immediately see that 
\[
KH_a(D') \cong KH_a(D^{=1}), a \geq 0.
\]

In the following, we would like to compare $C^\bullet(D^{=1}, k, A)$ and $C^\bullet(D', k, A)$, which would give more precise information for this isomorphism. 
These computations also motivates the proof of the key lemma \ref{lem:runningMMP}.

Case 1: $Z$ does not lie in $D^{=1}$. Then $E$ is not contained in $D'$, and the two complexes are simply the same by the birational invariance. 

Case 2: $Z$ is an irreducible component of $D_0 \cap \ldots \cap D_r, D_0, \ldots, D_r \subset D^{=1}$. Then $E$ is an irreducible component of $D'$. Write $D_i'$ for the strict transforms of $D_i$. 

Let $I$ be an index set such that $0 \not \in I$ and that $D_I \cap Z$ is non-empty. Consider an irreducible component $W$ of $E \cap D_I'$. It is a projective space bundle over its image in $X$. 

Note that $D_i'|_E$ is the relative $\OO(1)$ bundle for $E \to Z$ for $0 \leq i \leq r$.
 Thus one of the following two cases has to occur:
\begin{enumerate}
\item $W$ is isomorphic to its image in $X$ and
 $D_0' \cap W$ is empty.
 \item $D_0'\cap W$ is non-empty.
 \end{enumerate}
In the second case, both $D_0' \cap W$ and $W$ are projective space bundles over the same base, thus we have
\[
KH_a(W)\cong KH_a(D_0'\cap W).
\]
To sum up, let $W$ be an irreducible componet which belongs to the second case, and has minimal dimension among all such components.
Then
\[
KH_a(D_0' \cap W) \to KH_a(W)
\]
is a subcomplex of $C^\bullet(D', k, A)$ when placed in the right degree,
 which is quasi-isomorphic to $0$.
 
 We first take the quotient complex by all such subcomplexes. 
 Then we repeat this process. 
 Each time we take quotient by subcomplexes quasi-isomorphic to $0$, which come from irreducible components such that
 \begin{enumerate}
 \item  the irreducible component is contained in $E$;
 \item the irreducible component has non-empty intersection with $D_0'$;
 \item the irreducible component  has minimal dimension among all the remaining irreducible components that satisfy the above two properties.
\end{enumerate}  

The complex $C^\bullet(D', k, A)$ is quasi-isomorphic to the successive quotient by all the subcomplexes of the above form. 
The quotient complex is isomorphic to $C^\bullet(D^{=1}, k, A)$ by birational invariance of Kato homology.

Case 3: $Z$ is strictly contained in $D_0 \cap \ldots \cap D_r, D_0, \ldots, D_r \subset D^{=1}$. 
We use the same notations $D_i'$ as before. Then $D_0'|_E$ is the relative $\OO(1)$ for $E \to Z$.

For each index set $I$ such that $ D_I \subset Z$  and $0 \not \in I$,  the intersection $D_0 \cap D_I \cap Z$ is non-empty. 
Since $Z$ is not an irreducible component of $D_I$, there is an irreducible component $W$ of $D_I'$ such that $W \cap E$ is a positive dimensional projective space bundle over its image in $X$. 
Thus $D_0' \cap W \cap E$ is non-empty, and is a projective space bundle of one dimension less than $W \cap E$ over its image in $X$ (which is the same for $W\cap E$). 
Thus
\[
KH_a(W\cap E)\cong KH_a(D_0'\cap W \cap E), a \geq 0.
\]
As a result, if we take an irreducible component $W$ of $D_I'$ that has non-empty intersection with $E$, and that has the smallest dimension among all such components, then
\[
KH_a(D_0'\cap W\cap E)\to KH_a(W \cap E), a \geq 0
\]
is a subcomplex of $C^\bullet(D', k, A)$ when placed in the suitable degree. 
We take the quotient complex by all these subcomplexes. 
Then we successively take the quotient complex by subcomplexes of the form $KH_a(D_0'\cap W\cap E)\to KH_a(W \cap E), a \geq 0$ such that $W$ have the smallest dimension among all the remaining strata that is not contained in $D_0'$ and that has non-empty intersection with $E$.
In the end, we get a quotient complex that is quasi-isomorphic to $C^\bullet(D', k, A)$.
Moreover, the quotient complex is isomorphic to $C^\bullet(D, k, A)$ by birational invariance of Kato homology.
\end{ex}

\begin{rem}
We will generalize the argument in case 3 above in Lemma \ref{lem:runningMMP} to study the change of Kato homology under a divisorial contraction or a flip for dlt pairs. 
We interpret the intersection of divisors as lc centers, and use the resolution property $\textbf{R}(n, k)$ and the fibration property $\textbf{F}(n, k)$ to relate the Kato homology of different spaces. 
We have implicitly used some facts, (such as  $D_0' \cap W \cap E$ is irreducible and is an algebraic fibration over its image), which are clear in this example.
In the general case, such facts follow from the
Koll\'ar-Shokurov connectedness theorem \ref{thm:connected}.
\end{rem}

\begin{defn}\label{def:log-pullback}
Let $(X, \Delta)$ be a pair consisting of a normal variety and a subboundary divisor and $f: Y \to X$ a proper birational morphism. We use $K_Y+\Delta_Y\sim_\QQ f^*(K_X+\Delta), f_*(\Delta_Y)=\Delta_X$ to define the $\QQ$-divisor $\Delta_Y$, and call it the \emph{log pull-back} of $\Delta$.
\end{defn}
\begin{defn}\label{def:crepant}
Two pairs $(X_i, \Delta_i)$ consisting of a normal variety and a subboundary divisor are \emph{crepant birational equivalent} if there are proper birational morphisms $f_i: Y \to X_i$ such that the log pull-back of $\Delta_1$ equals the log pull-back of $\Delta_2$.
\end{defn}

Note that in the example above, in case 1 and 3, the exceptional divisor $E$ appears with coefficient strictly less than $1$ in the log pull-back of $D$. While in case 2, the log pull-back of $D$ is $D'$ (i.e. $E$ appears with coefficient precisely one). 
The dual complex of the divisors appearing with coeffiecient $1$ in the log pull-back of $D$ is a stellar subdivision in case $2$, and remains the same in the other two cases \cite[9.4]{deFKollarXuDualComplex}. 
Thus we have shown the following.

\begin{lem}\label{lem:logsmooth}
Let $(X, D)$ be a quasi-projective pair consisting of a smooth variety and a simple normal crossing subboundary divisor. Let $Z$ be a smooth subvariety of $X$ and $Y$ the blow-up of $X$ along $Z$, and $E$ the exceptional divisor.  Denote by $D'$ the log pull-back of $D$. Assume that $Z$ intersects $D$ in a simple normal crossing way. Then we have quasi-isomorphisms
\[
C^{\bullet}({D'^{=1}}, a, A) \rightarrow C^\bullet(D^{=1}, a, A), a \geq 0.
\]
\end{lem}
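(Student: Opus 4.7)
The approach is to split into three cases determined by the coefficient of $E$ in the log pull-back $\Delta_Y = f^*D - (c-1)E$, where $c = \operatorname{codim}_X Z \geq 2$. Writing $D = \sum a_i D_i$, this coefficient equals $\sum_{D_i \supset Z} a_i - (c-1)$, which by the SNC hypothesis is at most one (at most $c$ components meet through $Z$, each with $a_i \leq 1$), with equality precisely when $c$ components of $D^{=1}$ share $Z$ as an irreducible component of their intersection. These correspond exactly to the three cases of Example \ref{ex:smooth_blowup}: Case 1 where $Z \not\subset D^{=1}$, Case 2 where $Z$ is a component of an intersection of $c$ components of $D^{=1}$, and Case 3 otherwise. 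In Cases 1 and 3 we have $E \notin D'^{=1}$, while in Case 2 we have $E \in D'^{=1}$.

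In Cases 1 and 3, $D'^{=1}$ is just the union of strict transforms of components of $D^{=1}$. The key claim is that for every index set $I$ of such components, the intersection $\bigcap_{i \in I} D_i'$ coincides with the strict transform of $D_I = \bigcap_{i \in I} D_i$, which is the blow-up of $D_I$ along $D_I \cap Z$ and hence birational to $D_I$. This is verified in the local blow-up charts: choosing coordinates $x_1, \ldots, x_c$ with $Z = \{x_1 = \cdots = x_c = 0\}$ and each $D_i$ (for $i \in I$ with $D_i \supset Z$) given by a single coordinate $x_{j(i)}$ with $j(i) \leq c$, one checks that in the chart where $x_j$ is the blow-up direction, both sides agree, being $\{x_{j(i)}/x_j = 0 : i \in I\}$ when $j \notin \{j(i) : i \in I\}$ and empty otherwise. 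Property P6 (birational invariance of Kato homology) then yields a term-wise isomorphism $C^\bullet(D'^{=1}, a, A) \to C^\bullet(D^{=1}, a, A)$.

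Case 2 is the only case requiring real work. Here $D'^{=1} = E \cup \bigcup_{D_i \in D^{=1}} D_i'$. The subcomplex of $C^\bullet(D'^{=1}, a, A)$ spanned by terms not involving $E$ is, by the Cases 1 and 3 argument, already isomorphic to $C^\bullet(D^{=1}, a, A)$; it remains to eliminate the $E$-terms. Exploiting that $E \to Z$ is a $\PP^{c-1}$-bundle and that the $c$ components $D_0, \ldots, D_{c-1}$ of $D^{=1}$ sharing $Z$ as a component of their intersection restrict to $E$ as hyperplane sections in general position, the plan is to follow Case 2 of the example: each irreducible component $W$ of $E \cap \bigcap_{i \in I} D_i'$ with $0 \notin I$ is a projective subbundle over its image in $X$, and intersecting with $D_0'$ gives either the empty set or a codimension-one projective subbundle over the same image. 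This yields $KH_a(W) \cong KH_a(D_0' \cap W)$ and hence an acyclic two-term subcomplex $KH_a(D_0' \cap W) \to KH_a(W)$. Taking successive quotients by all such subcomplexes, ordered by increasing dimension of $W$, kills every $E$-involving term.

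The main obstacle is the combinatorial bookkeeping in Case 2: the acyclic subcomplexes must be extracted in a consistent order so that each pairwise cancellation remains well-defined at the moment it is performed. This is precisely what Example \ref{ex:smooth_blowup} works through, and the present lemma is essentially a restatement of that computation in the cleaner language of log pull-back, with Cases 1 and 3 collapsing to a straightforward birational-invariance calculation.
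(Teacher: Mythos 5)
Your decomposition into cases according to the coefficient of $E$ in the log pull-back is exactly the paper's, and your treatment of Cases~1 and~3 (term-wise isomorphism via the chart computation showing $\bigcap_{i\in I}D_i'$ is the strict transform of $D_I$) is correct and matches the observation the paper makes in the paragraph preceding the lemma. However, your summary of Case~2 contains a genuine error that would break the argument if followed literally. You assert that the subcomplex of $C^\bullet(D'^{=1},a,A)$ spanned by terms not involving $E$ is already isomorphic to $C^\bullet(D^{=1},a,A)$, and that the acyclic two-term subcomplexes then kill every $E$-involving term. Both claims are false: when $Z$ is a component of $D_0\cap\cdots\cap D_{c-1}$, the intersection $D_0'\cap\cdots\cap D_{c-1}'$ loses the $Z$-component entirely (its intersection with every blow-up chart $U_j$, $j\leq c$, is empty near $Z$, since each chart misses one of the $D_i'$), so the non-$E$ subcomplex has a hole at the slot where $KH_a(Z)$ should sit. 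Correspondingly, the components $W$ of $E\cap D_I'$ with $D_0'\cap W=\emptyset$ --- for instance $W=E\cap D_1'\cap\cdots\cap D_{c-1}'$, which is a section of $E\to Z$ --- do not participate in any acyclic pair and survive the quotienting. These surviving $E$-terms are precisely what fill the holes: $W$ maps birationally onto the missing component $Z$ of $D_{\{0,\ldots,c-1\}}$, and the final identification of the quotient complex with $C^\bullet(D^{=1},a,A)$ in Example~\ref{ex:smooth_blowup} uses this, not a splitting into ``non-$E$ part plus acyclic $E$-part.'' You noticed the dichotomy between $D_0'\cap W$ empty versus nonempty but then discarded the empty case as if it contributed nothing; it contributes essentially, and without it the quotient complex is not isomorphic to $C^\bullet(D^{=1},a,A)$.
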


\begin{lem}\label{lem:crepant}
Let $(X_1, \Delta_1)$ and $(X_2, \Delta_2)$ be quasi-projective crepant birational dlt pairs, and let $(X_0, \Delta_0)$ be a dlt pair with morphism $p_i: (X_0, \Delta_0) \to (X_i, \Delta_i)$ such that $p_1^*(K_{X_1}+\Delta_1) = p_2^*(K_{X_2}+\Delta_2)=K_{X_0}+\Delta_0$. Assume that the property $\textbf{R}(\dim X_1-1, k)$ holds. Then we have quasi-isomorphisms
\[
C^{\bullet}(\Delta_1^{=1}, a, A) \xleftarrow{\text{qis}} C^{\bullet}(\Delta_0^{=1}, a, A) \xrightarrow{\text{qis}} C^\bullet(\Delta_2^{=1}, a, A), 0 \leq a \leq k.
\]
\end{lem}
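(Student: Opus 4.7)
The plan is to reduce the claim to a single morphism by symmetry and then factor that morphism into elementary steps of the minimal model program. By symmetry of the statement in $p_1$ and $p_2$, it suffices to prove that for the birational morphism $p=p_1: (X_0, \Delta_0) \to (X_1, \Delta_1)$, which satisfies $p^*(K_{X_1}+\Delta_1) = K_{X_0}+\Delta_0$, the induced map $C^\bullet(\Delta_0^{=1}, a, A) \to C^\bullet(\Delta_1^{=1}, a, A)$ is a quasi-isomorphism for $0 \leq a \leq k$; the same argument then handles $p_2$.

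To factor $p$, I would pick a general ample $\QQ$-divisor $H$ on $X_0$ and a sufficiently small rational $\epsilon>0$ so that $(X_0, \Delta_0+\epsilon H)$ is still dlt. Since $K_{X_0}+\Delta_0$ is $p$-numerically trivial and $\epsilon H$ is $p$-ample, running a $(K_{X_0}+\Delta_0+\epsilon H)$-MMP over $X_1$ with scaling produces, via standard characteristic-zero termination, a finite sequence of divisorial contractions and flips
\[
(X_0, \Delta_0) = (Y_0, \Theta_0) \dashrightarrow (Y_1, \Theta_1) \dashrightarrow \cdots \dashrightarrow (Y_N, \Theta_N) = (X_1, \Delta_1),
\]
through intermediate quasi-projective dlt pairs, each crepant to $(X_0, \Delta_0)$. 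The key Lemma \ref{lem:runningMMP} (to be proved next in this section, generalizing case 3 of Example \ref{ex:smooth_blowup}) will then assert that under the hypothesis $\textbf{R}(\dim X_1 - 1, k)$, each divisorial contraction and each flip in such a sequence induces a quasi-isomorphism between the corresponding $C^\bullet$ complexes in degrees $0 \leq a \leq k$, and composing these gives the desired quasi-isomorphism.

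The main obstacle is the proof of Lemma \ref{lem:runningMMP} itself, where the strategy — foreshadowed by case 3 of Example \ref{ex:smooth_blowup} — is to peel off the contributions from contracted lc strata by identifying acyclic subcomplexes inside $C^\bullet(\Theta_i^{=1}, a, A)$. For each exceptional lc center $W \subset Y_i$, the Koll\'ar-Shokurov connectedness theorem (Theorem \ref{thm:connected}) ensures that the map to its image $Z \subset Y_{i+1}$ has connected fibers; Proposition \ref{prop:dltbase} equips $Z$ with a dlt boundary; and one processes the exceptional lc strata in order of increasing dimension so that removing each as an acyclic quotient leaves exactly the $C^\bullet$ complex of $(Y_{i+1}, \Theta_{i+1})$. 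The hypothesis $\textbf{R}(\dim X_1 - 1, k)$ enters when the singular intersections appearing along the way must be replaced by their log resolutions in order to invoke birational invariance of Kato homology on smooth strata. Flips require running the same analysis on both sides of the common small contraction, yielding a zig-zag of quasi-isomorphisms that compose correctly through the base of the flip.
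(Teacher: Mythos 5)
Your strategy of factoring $p_1$ into MMP steps and feeding each step into Lemma \ref{lem:runningMMP} has two genuine gaps, and the paper in fact proves Lemma \ref{lem:crepant} by an entirely different (and more elementary) route.

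The first gap is that Lemma \ref{lem:runningMMP} treats a divisorial contraction or flip of a \emph{$(K_X+\Delta)$-negative} extremal ray, and crucially the proof (step (\ref{iso})) uses that the restriction $Z\to B$ to each contracted lc center has log Fano, hence rationally connected, general fiber. In your setup $K_{X_0}+\Delta_0 = p_1^*(K_{X_1}+\Delta_1)$ is numerically trivial over $X_1$, so \emph{every} extremal ray contracted over $X_1$ is $(K_{X_0}+\Delta_0)$-trivial, not negative. Running a $(K_{X_0}+\Delta_0+\epsilon H)$-MMP over $X_1$ therefore produces $(K_{X_0}+\Delta_0)$-trivial steps to which Lemma \ref{lem:runningMMP} does not apply, and the fibers over contracted lc centers are log Calabi--Yau rather than log Fano, so rational connectedness is no longer automatic. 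Attempting to tilt the boundary (say to $\Delta_0-\delta A$) so as to make the rays genuinely $(K+\Delta)$-negative either destroys effectivity or changes the coefficient-one part $\Delta^{=1}$, precisely the datum whose complex $C^\bullet$ you are trying to control. The second gap is a hypothesis mismatch: Lemma \ref{lem:runningMMP} assumes both $\textbf{R}(\dim X-1,k)$ \emph{and} $\textbf{F}(\dim X-1,k)$, while Lemma \ref{lem:crepant} only assumes the resolution property $\textbf{R}(\dim X_1-1,k)$. Even if the MMP issue could be fixed, your argument would prove the lemma under stronger hypotheses than stated.

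The paper avoids MMP entirely here. Since each $(X_i,\Delta_i)$ is dlt, one may choose a log resolution $g_i\colon(Y_i,\Delta_{Y_i})\to(X_i,\Delta_i)$ that is crepant and identifies the dual complex of $\Delta_{Y_i}^{=1}$ with that of $\Delta_i^{=1}$ (take a resolution that is an isomorphism away from the closed set $Z$ in Definition \ref{def:dlt}). The resolution property $\textbf{R}(\dim X_1-1,k)$ is then invoked stratum by stratum to identify $C^\bullet(\Delta_{Y_i}^{=1},a,A)$ with $C^\bullet(\Delta_i^{=1},a,A)$. This reduces everything to the log smooth setting, where the two models are connected by weak factorization and Lemma \ref{lem:logsmooth} handles each blow-up along a smooth center directly. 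No fibration property and no $(K+\Delta)$-negative contraction theory is needed, because the only ``fibrations'' that arise in weak factorization are projective-space bundles, whose effect on $C^\bullet$ is computed explicitly in Example \ref{ex:smooth_blowup}.
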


\begin{proof}
By Definition \ref{def:dlt}, every dlt pair $(X, \Delta_X)$ has a log resolution $g: (Y, \Delta_Y) \to (X, \Delta_X)$ such that the dual complex of $\Delta_Y^{=1}$ is identified with $\Delta_X^{=1}$ and $K_Y+\Delta_Y=g^*(K_X+\Delta_X)$ (e.g. take the log resolution that is an isomorphism away from $Z$ in Definition \ref{def:dlt}).
Here the divisor $\Delta_Y$ is only a subboundary divisor.
Thus the assumption $R(\dim X_1-1, k)$ (without the surjectivity on $KH_{k+1}$) implies that we can compute the $E^2$ page of $KH_\bullet(\Delta_1^{=1})$ using the log resolution if we allow subboundary divisors.
Then the quasi-isomorphism follows from Lemma \ref{lem:logsmooth} and weak factorization.
\end{proof}

The following lemma is a generalization of case $3$ in Example \ref{ex:smooth_blowup}.
\begin{lem}\label{lem:runningMMP}
Let $\pi: (X, \Delta)\to S$ be a projective morphism to a quasi-projective variety $S$ such that $(X, \Delta)$ is dlt, 
and let $f: (X, \Delta) \dashrightarrow (X', \Delta')$ be a divisorial contraction of a $(K_X+\Delta)$-negative extremal ray or a flip over $S$, 
where $\Delta'=f_*(\Delta)$. 
Let $R$ be the extremal ray corresponding to the divisorial/flipping contraction. 
Assume that there is an irreducible component $\Delta_0$ of $\Delta^{=1}$ such that $\Delta_0 \cdot R>0$. 
Assume that $\textbf{R}(\dim X-1, k)$ and $\textbf{F}(\dim X-1, k)$ hold. 
Then there is a quasi-isomorphism 
\[
C^{\bullet}({\Delta}^{=1}, a, A) \rightarrow C^\bullet({\Delta'}^{=1}, a, A), 0 \leq a \leq k.
\]
As a consequence, in the $E^r (r\geq 2)$ page of the spectral sequence computing $KH_a(\Delta^{=1})$ and $KH_a({\Delta'}^{=1})$, we have isomorphisms
\[
E^r_{p, q}(\Delta^{=1})\cong E^r_{p, q}({\Delta'}^{=1}), 0 \leq q \leq k-r+2,
\]
\[
E^r_{p, q}(\Delta^{=1})\cong E^r_{p, q}({\Delta'}^{=1}), 0 \leq q \leq k, 0 \leq p \leq r-2
\]
and a commutative diagram for $0 \leq q \leq k-r+1$
\[
\begin{CD}
 E^r_{p, q}(\Delta^{=1}) @>d_r>> E^r_{p-r, q+r-1}(\Delta^{=1})\\
 @V\cong VV @V\cong VV\\
 E^r_{p, q}({\Delta'}^{=1}) @>d_r>> E^r_{p-r, q+r-1}({\Delta'}^{=1}), \\
 \end{CD}
\]
Furthermore, we have an isomorphism
\[
KH_a({\Delta}^{=1}, A) \cong KH_a ({\Delta'}^{=1}, A), 0 \leq a \leq k.
\]
Finally, all the isomorphisms, commutative diagrams, etc. are compatible with respect to the morphism to $S$.
\end{lem}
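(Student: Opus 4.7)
The plan is to imitate Case 3 of Example \ref{ex:smooth_blowup}: I shall construct a filtration on $C^\bullet(\Delta^{=1},a,A)$ whose successive quotients are either canonically identified with sub-quotients of $C^\bullet({\Delta'}^{=1},a,A)$ or are two-term complexes $KH_a(W\cap\Delta_0)\to KH_a(W)$ acyclic in degrees $\le k$. The hypothesis $\Delta_0\cdot R>0$ is precisely what guarantees that every lc center contracted by the ray but not contained in $\Delta_0$ admits such a companion sub-stratum, and moreover forces $\Delta_0$ itself to be \emph{not} contracted by the ray.

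I first treat the divisorial contraction $f\colon X\to X'$. By Theorem \ref{thm:Fujino}, every lc center $W$ of $(X,\Delta)$ is normal with an induced dlt structure satisfying $K_W+\Delta_W\sim_\QQ (K_X+\Delta)|_W$, and I separate lc centers into \emph{horizontal} ($\dim f(W)=\dim W$) and \emph{vertical} ($\dim f(W)<\dim W$) ones. For horizontal $W$ the restriction $f|_W$ is a proper birational morphism of quasi-projective varieties, so property P6 gives $KH_a(W)\cong KH_a(f(W))$ in every degree, and as $W$ varies these images run through exactly the lc centers of $(X',\Delta')$ via the bijection between uncontracted components of $\Delta^{=1}$ and components of ${\Delta'}^{=1}$. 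For vertical $W$, Proposition \ref{prop:dltbase} endows $f(W)$ with a dlt structure and $f|_W$ becomes a $(K_W+\Delta_W)$-negative contraction with rationally connected general fibers, so taking a log resolution of $W$ and combining $\textbf{R}(\dim X-1,k)$ with $\textbf{F}(\dim X-1,k)$ yields $KH_a(W)\cong KH_a(f(W))$ for $a\le k$. If moreover $W\not\subset\Delta_0$, then $\Delta_0|_W$ is an $f|_W$-ample $\QQ$-Cartier divisor by Theorem \ref{thm:Fujino}(3), so the connectedness theorem \ref{thm:connected} guarantees that the irreducible components of $W\cap\Delta_0$ are lc centers of strictly smaller dimension whose union surjects onto $f(W)$ with general fibers that are points or rationally connected Fano hyperplane sections of general fibers of $W\to f(W)$. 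The same combination $\textbf{R}+\textbf{F}$ (or P6 in the birational subcase) gives $KH_a(W\cap\Delta_0)\cong KH_a(f(W))$ for $a\le k$, and since $(f|_W)_*\circ i_*=(f|_{W\cap\Delta_0})_*$ by functoriality of proper pushforward, the inclusion-induced map $i_*\colon KH_a(W\cap\Delta_0)\to KH_a(W)$ is an isomorphism in those degrees.

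The filtration is then built by peeling off, starting from the lowest-dimensional stratum and working outward, the two-term acyclic subcomplex $KH_a(W\cap\Delta_0)\to KH_a(W)$ attached to each vertical $W\not\subset\Delta_0$; each step preserves quasi-isomorphism in degrees $\le k$. The terminal quotient retains exactly the horizontal strata and the vertical strata inside $\Delta_0$, matched bijectively to the lc centers of $(X',\Delta')$, and is canonically identified with $C^\bullet({\Delta'}^{=1},a,A)$. The spectral sequence comparisons and the final $KH_a$-isomorphism follow formally from a quasi-isomorphism of bounded complexes in the stated range, and the $S$-relative compatibility is automatic because every construction (strata, restrictions of $f$, resolutions, filtration) is performed over $S$. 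For the flip case, $f$ is only a rational map and $(X,\Delta)$, $(X',\Delta')$ are not themselves crepant birational, so I would instead pass to a common log resolution $g\colon Y\to X$, $g'\colon Y\to X'$ with $(Y,\Delta_Y)$ dlt, use Lemma \ref{lem:crepant} to relate $C^\bullet(\Delta^{=1},a,A)$ crepantly with the corresponding complex for a suitable dlt model over $Y$, and then apply the divisorial-contraction argument stepwise along MMP-factorizations of the birational maps on each side. This last reduction is the main obstacle of the proof, since one must transport the positivity hypothesis $\Delta_0\cdot R>0$ through each elementary step and verify that the pairings $(W\cap\Delta_0,W)$ constructed on each side reassemble into matching acyclic subcomplexes of the two complexes being compared.
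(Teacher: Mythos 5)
Your strategy for the divisorial case is the right one and matches the paper's: peel off two-term acyclic subcomplexes $KH_a(Z\cap\Delta_0)\to KH_a(Z)$ attached to contracted lc centers $Z\not\subset\Delta_0$, using the dominance of $\Delta_0\cap Z\to B$ forced by $\Delta_0\cdot R>0$ together with $\textbf{R}$, $\textbf{F}$, and Koll\'ar--Shokurov connectedness. However, there are two genuine gaps.

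First, the filtration argument requires not just that some components of $Z\cap\Delta_0$ surject onto the image $B$, but that $Z\cap\Delta_0$ is a single \emph{irreducible} contracted lc center, and moreover that $Z\mapsto Z\cap\Delta_0$ gives a bijection between contracted lc centers outside $\Delta_0$ and contracted lc centers inside $\Delta_0$ (with inverse $W\mapsto W_-$). Your argument, which invokes Theorem \ref{thm:Fujino}(3) and the connectedness theorem, does not establish this irreducibility or bijection: connectedness of the fibers of $\Delta_0\cap Z\to B$ is weaker than irreducibility of $\Delta_0\cap Z$ itself, and without the bijection you cannot ensure that the quotient after all peelings is indexed exactly by the lc centers of $(X',\Delta')$. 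The paper supplies these facts by citing the proof of Theorem 19 in \cite{deFKollarXuDualComplex}, and this citation does real work that your sketch cannot replace with the tools you list.

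Second, your treatment of the flip case is both over-complicated and rests on a false premise: a flip is crepant birational by definition, since $K_X+\Delta$ and $K_{X'}+\Delta'$ pull back to the same divisor on the common extraction over the intermediate variety. More to the point, the paper handles flips and divisorial contractions uniformly, because the argument runs entirely on the contraction morphism $X\to Z_{\mathrm{int}}$ to the intermediate variety (so that ``contracted'' means contained in its exceptional locus and $B$ is the image there), never on the rational map $X\dashrightarrow X'$ itself. After the peeling, the remaining quotient complex is matched to $C^\bullet({\Delta'}^{=1},a,A)$ by birational invariance (P6) and resolution ($\textbf{R}$) applied stratum by stratum, since each non-contracted lc center of $(X,\Delta)$ is birational to the corresponding lc center of $(X',\Delta')$. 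Your proposed detour through a common log resolution, crepant comparison, and stepwise MMP factorization is unnecessary and, as you yourself flag, left unresolved.
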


\begin{proof}
\begin{enumerate}
\item 
 For a divisorial contraction or a flip $f: (X, \Delta) \dashrightarrow (X', \Delta')$, we know that $(X', \Delta')$ is also dlt.  
 
 For every lc center $Z \subset \Delta_I$, either $Z$ is contained in $Ex(f)$, in which case we say that it is \textbf{contracted}, or $f$ is an isomorphism at the generic point of $Z$, in which case $f_*(Z)$ is again an lc center. Since discrepancy strictly increases for a divisor whose center is contained in $Ex(f^{-1})$, we see that every lc center of $(X', \Delta')$ arises in this way. 

 For any $\Delta_J$, there exists $\QQ$-divisors $S_J$ on $\Delta_J$ such that the pair $(\Delta_J, \Delta-\sum_{j \in J}\Delta_j+S_J)$ is dlt, and $\Delta_i|_{\Delta_J} (i \notin J)$ is $\QQ$-Cartier (Theorem \ref{thm:Fujino}). 

\item \label{deFKX} By \cite[Proof of Theorem 19]{deFKollarXuDualComplex}, we have the following description of contracted lc centers.  Let $Z$ be a contracted lc center that is not contained in $\Delta_0$. $Z$ is an irreducible component, as well as a connected component, of some $\Delta_I$. Then $Z_+:=\Delta_0\cap Z$ is an \emph{irreducible} contracted lc center. Conversely, for any contracted lc center $W$ contained in $D_0$, there is a contracted lc center $W_-$ that is not contained in $\Delta_0$ such that $W=\Delta_0 \cap W_-$. Furthermore, the operations $Z \mapsto Z_+, W \mapsto W_-$ are inverse to each other.

\item \label{iso}Let $Z$ be a contracted lc center that is not contained in $\Delta_0$. Since $\Delta_0 \cdot R>0$, the composition $g: \Delta_0 \cap Z \to Z \to B$ is dominant. Here $B$ is the image of $Z$ under the divisorial/flipping contraction. Koll\'ar-Shokurov connectedness theorem \ref{thm:connected} implies that $g: \Delta_0 \cap Z  \to B$ is an algebraic fiber space, that is, $g_*(\OO_{\Delta_0 \cap Z})\cong \OO_{B}$. Furthermore, both $\Delta_0 \cap Z \to B, Z \to B$ are rationally connected fibrations since both morphisms have dlt Fano general fibers.
Thus by the assumptions $\textbf{R}(\dim X-1, k)$ and $\textbf{F}(\dim X-1, k)$, we have 
\[
KH_a(\Delta_I \cap Z) \cong KH_a(Z), 0 \leq a \leq k.
\]

\item  In this step we will successively take quotient complexes of $C^\bullet(\Delta^{=1}, k, A)$ by a subcomplex quasi-isomorphic to the zero complex.
We start with the contracted lc centers that are of smallest dimension and that are not contained in $\Delta_0$. Let $Z\subset \Delta_I$ be one of such lc centers. Then the lc center $\Delta_0 \cap Z$ is also an irreducible contracted lc center by (\ref{deFKX}). By (\ref{iso}), and by the minimality condition
\[
KH_a(Z\cap \Delta_0) \to KH_a(Z), 0 \leq a \leq k,
\]
 when placed in suitable degree, is a subcomplex of $C^\bullet(\Delta^{=1}, a, A), 0 \leq a \leq k$. 
 In the first step, we quotient out such subcomplexes.
 Then we successively quotient out the subcomplexes coming from contracted lc centers that have minimal dimension among all the remaining contracted lc centers. 
 In the end, we get a quotient complex that only contains non-contracted lc centers and is quasi-isomoprhic to $C^\bullet(\Delta^{=1}, a, A)$ for each $0 \leq a \leq k$.
By the resolution property and birational invariance, this quotient complex is isomorphic to $C^\bullet({\Delta'}^{=1}, a, A)$ for each $0 \leq a \leq k$.

\item The isomorphisms in the $E^r$-page of the spectral sequence, the commutativity of the differential, and the isomorphism of Kato homology follow from the construction of the spectral sequence from the double complex (\ref{doublecomplex}).  
\end{enumerate}
\end{proof}

For the existence of $\Delta_0$ in Lemma \ref{lem:runningMMP} we need the following two lemmas.
\begin{lem}\cite[Lemma 21]{deFKollarXuDualComplex}\label{lem:Delta0}
Let $(X, \Delta)$ be a dlt pair and $g : X \to S $ a proper morphism. Assume that there is a numerically $g$-trivial effective divisor $A$ whose support equals $\Delta^{=1}$.
Let $f: X \dashrightarrow Y$ be a divisorial contraction or flip corresponding to a $(K_X + \Delta)$-negative extremal ray $R$ over $S$. Then
\begin{enumerate}
 \item
  either $Ex(f)$ does not contain any lc centers
\item
 or there is a prime divisor $\Delta_0\subset \Delta^{=1}$ such that $(\Delta_0 \cdot R)>0$.

\end{enumerate}
\end{lem}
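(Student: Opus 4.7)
The plan is to prove the contrapositive: assume that no component $\Delta_0$ of $\Delta^{=1}$ satisfies $\Delta_0 \cdot R > 0$, and deduce that $Ex(f)$ contains no lc center of $(X,\Delta)$.

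A short numerical reduction first. Since $R$ is contracted over $S$, every representative of $R$ is a curve lying in a fiber of $g$, so the $g$-numerical triviality of $A$ yields $A \cdot R = 0$. Writing $A = \sum_i a_i\Delta_i$ with all $a_i > 0$, the identity
\[
 0 = A \cdot R = \sum_i a_i(\Delta_i \cdot R),
\]
combined with the standing assumption $\Delta_i \cdot R \leq 0$, forces $\Delta_i \cdot R = 0$ for every $i$. It follows that for any irreducible curve $C$ generating $R$, either $C \subset \Delta_i$ or $C \cap \Delta_i = \emptyset$ set-theoretically, for each $i$.

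This already excludes the codimension-one lc centers (the $\Delta_i$ themselves) from $Ex(f)$: in the flipping case $Ex(f)$ has codimension $\geq 2$, while in the divisorial case $Ex(f)$ is a prime divisor $E$ with $E \cdot R < 0$, so $E$ cannot equal any $\Delta_i$. For higher codimension, suppose for contradiction that some lc center $Z \subset Ex(f)$ is an irreducible component of $\Delta_I = \bigcap_{i \in I}\Delta_i$ with $|I| \geq 2$, of pure codimension $|I|$ by Theorem \ref{thm:Fujino}. Pick an irreducible curve $C$ in class $R$ through a point of $Z$: each $\Delta_i$ with $i \in I$ meets $C$, so the dichotomy forces $C \subset \Delta_i$ for every such $i$ and thus $C \subset \Delta_I$. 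When $|I| = \dim X$ this contradicts $\dim C = 1$; otherwise $C$ is forced into some component of $\Delta_I$ of dimension $\dim Z \geq 1$, and one still has to produce a contradiction.

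To finish, I would invoke Theorem \ref{thm:Fujino}(4) to equip $Z$ with a dlt adjunction $(Z, \Delta_Z)$ satisfying $K_Z + \Delta_Z \sim_\QQ (K_X + \Delta)|_Z$, making the restriction to $Z$ of the contraction of $R$ a $(K_Z + \Delta_Z)$-negative contraction whose general fibers $F_Z$ are rationally connected; on such a fiber $A|_{F_Z}$ is numerically trivial, and combining this with the effectivity of the $\QQ$-Cartier restrictions $\Delta_j|_Z$ for $j \notin I$ (Theorem \ref{thm:Fujino}(3)) is expected to force every $R$-curve meeting $F_Z$ to avoid each $\Delta_j$ with $j \notin I$, ultimately contradicting $Z \subset Ex(f)$. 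The hard part is precisely this last implication: translating the numerical constraint that every $R$-curve meeting $Z$ is trapped inside $\Delta_I$ into a genuine geometric obstruction, while carefully separating the contributions to $A$ coming from components with $i \in I$ (which restrict only as non-effective $\QQ$-Cartier pullbacks to $F_Z$) from those with $i \notin I$ (which give honest effective divisors on $F_Z$).
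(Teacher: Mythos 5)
The numerical reduction is carried out correctly: since every $R$-curve is $g$-contracted, $A\cdot R = 0$, and with all $a_i > 0$ and $\Delta_i \cdot R \leq 0$ one deduces $\Delta_i \cdot R = 0$ for every $i$, yielding the dichotomy that an $R$-curve is either contained in $\Delta_i$ or disjoint from it. The exclusion of divisorial lc centers is also fine (codimension in the flipping case, $E\cdot R < 0 \neq 0$ in the divisorial case), and so is the observation that an $R$-curve through a general point of an lc center $Z$, a component of $\Delta_I$, gets trapped inside $\Delta_I$ and hence inside $Z$.

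The problem is that the proof never produces the decisive contradiction, and you yourself flag this: "the hard part is precisely this last implication." The intermediate goal you describe — showing that a general fiber $F_Z$ of $p|_Z$ is disjoint from $\Delta_j$ for $j\notin I$ — is a true and useful statement (via Fujino's $\QQ$-Cartier restriction and the fact that a nonzero effective $\QQ$-Cartier divisor on a projective variety cannot be numerically trivial), but it does \emph{not} by itself contradict $Z\subset Ex(f)$: there is nothing inconsistent about a positive-dimensional lc center $Z$ that is swept out by $R$-curves and whose general fiber over $p(Z)$ is disjoint from the remaining boundary components. You also write "$A|_{F_Z}$ is numerically trivial", which is not an honest restriction, because $F_Z \subset \Delta_i$ for $i\in I$ so $F_Z\subset \operatorname{Supp} A$ — you acknowledge this subtlety in the final sentence, but it undermines the suggested route. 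Closing the gap requires an additional idea beyond what is sketched: one has to exploit a minimality choice of $Z$, the Koll\'ar--Shokurov connectedness of the non-klt locus in the fibers of the contraction $p$, and a dimension count (or induction on dimension via adjunction to a $\Delta_i$), to turn "the $R$-curves through $Z$ are trapped in $\Delta_I$ and avoid the other $\Delta_j$" into a genuine contradiction. As it stands, the proposal correctly identifies the starting numerical input and the relevant adjunction tools, but the proof is not complete.
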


\begin{lem}\cite[Lemma 23]{deFKollarXuDualComplex} \label{lem:contraction}
Let $X, Y$ be normal, $\QQ$-factorial varieties and $p : Y \to X$ a projective, birational morphism. Let $\Delta$ be a boundary on $Y$ such that $(Y, \Delta)$ is lc. Let $f : Y \dashrightarrow Y_1$ be a divisorial contraction or flip corresponding to a $(K_Y + \Delta)$-negative extremal ray $R$ over $X$. Then
\begin{enumerate}
\item either there is a divisor $E_R \subset Ex(p)$ such that $(E_R \cdot R) > 0$;
\item or $f$ contracts a divisor $E_f \subset Ex(p)$ and $Y_1 \to X$ is a local isomorphism at the generic point of $f(E_f)$.
\end{enumerate}
\end{lem}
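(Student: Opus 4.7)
The approach is a case analysis on whether $f$ is a divisorial contraction or a flip, using that since $f$ is an MMP step over $X$, every curve $C$ in the extremal ray $R$ is $p$-contracted and therefore lies in $\operatorname{Ex}(p) = \bigcup_i E_i$, where the $E_i$ are the irreducible components of the exceptional locus of $p$. The key numerical input when (i) fails, i.e.\ $(E_i \cdot R) \le 0$ for every $E_i \subset \operatorname{Ex}(p)$, is the following strong vanishing: for any $C \in R$ and any $E_i$ with $C \not\subset E_i$, one has $(E_i \cdot C) \ge 0$ by $\QQ$-Cartierness of $E_i$ on the $\QQ$-factorial variety $Y$, hence $(E_i \cdot C) = 0$.

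First I would observe that in the divisorial case with contracted divisor $E_f$, the inclusion $E_f \subset \operatorname{Ex}(p)$ is automatic: $E_f$ is covered by the positive-dimensional fibers of $f|_{E_f}$, which are curves in $R$, hence $p$-contracted, hence contained in $\operatorname{Ex}(p)$. Now assume (i) fails, and let $F$ be the general fiber of $f|_{E_f} \colon E_f \to f(E_f)$, a curve in $R$. For any $E_i$ with $i \ne f$, we have $F \not\subset E_i$ (otherwise $E_f \subset E_i$, contradicting that $E_i$ and $E_f$ are distinct irreducible divisors), so the strong vanishing gives $(E_i \cdot F) = 0$; for a $\QQ$-Cartier $E_i$ not containing a general $F$, this forces $E_i \cap F = \emptyset$. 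Since $F$ varies in a family covering $E_f$, we conclude that $E_i \cap E_f$ has no component dominating $f(E_f)$. As the exceptional divisors of $p_1 \colon Y_1 \to X$ are precisely the strict transforms $f_* E_i$ for $i \ne f$, none of them contains the generic point of $f(E_f)$, so $p_1$ is a local isomorphism there, establishing conclusion (ii).

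For the flip case, $f$ contracts no divisor, so conclusion (ii) is impossible and I must derive (i) outright. Suppose for contradiction that $(E_i \cdot R) \le 0$ for all $i$. Let $c \colon Y \to Z$ be the small flipping contraction, so $p = q \circ c$ with $\rho(Y/Z) = 1$ and $-(K_Y + \Delta)$ is $c$-ample. Picking a curve $C \in R$ and the unique component $E_{i_0}$ containing $C$, the strong vanishing forces $(E_i \cdot C) = 0$ for $i \ne i_0$ and $(E_{i_0} \cdot C) \le 0$. Applying the negativity lemma to the birational restriction $c|_{E_{i_0}} \colon E_{i_0} \to c(E_{i_0})$ (whose exceptional locus contains $C$), together with $c$-ampleness of $-(K_Y + \Delta)$, yields the required contradiction.

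The main obstacle is the flip case. The divisorial case reduces to a direct numerical argument combined with the covering property of $R$ and the $\QQ$-factoriality of $Y$, but the flip case requires a careful invocation of the negativity lemma for the small contraction $c$, where the base $Z$ need not be $\QQ$-factorial and none of the $E_i$ is $c$-exceptional. The cleanest articulation may go either through the restriction $c|_{E_{i_0}}$ or through a common resolution of $Y$ and $Y_1$ over $Z$, exploiting the sign behavior of intersection numbers of strict transforms across the flip.
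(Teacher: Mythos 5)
The paper does not prove this lemma; it is cited verbatim from de~Fernex--Koll\'ar--Xu. So I can only evaluate your argument on its own merits.

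Your divisorial case has the right geometric picture but a real gap at the step ``Since $F$ varies in a family covering $E_f$, we conclude that $E_i\cap E_f$ has no component dominating $f(E_f)$.'' You have only shown that a \emph{general curve} in a fiber of $f|_{E_f}$ avoids $E_i$; but the fibers of $f|_{E_f}$ can have dimension $\geq 2$, and $E_i$ could then cut every fiber in a positive-dimensional subset (so that $E_i\cap E_f$ \emph{does} dominate $f(E_f)$) while still missing the general curve. The fix is short but needed: if $E_i$ meets a general fiber $F_y$ of $f|_{E_f}$ without containing it, then since $F_y$ is connected and covered by curves numerically proportional to $R$, one finds a curve $C\subset F_y$ with $C\cap E_i\neq\emptyset$ and $C\not\subset E_i$, forcing $(E_i\cdot R)>0$ and contradicting the failure of (i).

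The flip case is a genuine missing idea. You assert that the negativity lemma applied to $c|_{E_{i_0}}$ together with the $c$-ampleness of $-(K_Y+\Delta)$ ``yields the required contradiction,'' but you neither name the divisor being fed to the negativity lemma nor explain how the contradiction appears, and indeed no contradiction is reachable working only on $Y$: after negating (i) one merely deduces that some $(E_{i_0}\cdot R)<0$ with all others $\leq 0$, which is perfectly consistent on $Y$. Your closing remark about ``exploiting the sign behavior of intersection numbers of strict transforms across the flip'' is the correct instinct, but it has to be carried out on $Y_1$. Concretely: take a general ample effective divisor $H_1$ on $Y_1$ and set $F^+=p_1^*(p_{1*}H_1)-H_1$. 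Since $X$ and $Y_1$ are both $\QQ$-factorial and $p_{1*}F^+=0$, the negativity lemma applied to $p_1$ gives $F^+=\sum a_i'E_i^+$ effective with all $a_i'>0$ (each $E_i^+$ is covered by $p_1$-contracted curves, and $(-F^+\cdot C')=(H_1\cdot C')>0$ for such $C'$ would fail if $a_i'=0$). Since $R^+$ is $p_1$-contracted, the projection formula gives $(F^+\cdot R^+)=-(H_1\cdot R^+)<0$, so some $(E_j^+\cdot R^+)<0$; the sign reversal across the flip then yields $(E_j\cdot R)>0$, which is (i). The point you are missing is that the analogous computation on $Y$ has the ``wrong'' sign and proves nothing; passing to the flipped side is essential. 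Finally, a minor slip: there is no reason the curve $C$ is contained in a \emph{unique} component of $Ex(p)$, though your argument does not actually use that uniqueness.
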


Combine Lemma \ref{lem:runningMMP} and \ref{lem:Delta0}, we prove the following result.

\begin{cor}\label{cor:qis_mmp}
Let $(X, \Delta)$ be dlt and $g : X \to S $ a projective morphism. Assume that there is a numerically $g$-trivial effective divisor $A$ whose support equals $\Delta^{=1}$.
Let $f: X \dashrightarrow Y$be a birational map obtained by running an $(X, \Delta)$-MMP
over $S$. Set $\Delta_Y:= f_*\Delta$.  Then there are quasi-isomorphisms
\[
C^{\bullet}({\Delta}^{=1}, a, A) \rightarrow C^\bullet(\Delta_Y^{=1}, a, A), 0 \leq a \leq k.
\]
Furthermore, $KH_a({\Delta}^{=1})\cong KH_a({\Delta_Y}^{=1}), 0 \leq a \leq k$.
\end{cor}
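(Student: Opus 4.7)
The plan is to induct on the number of MMP steps. Write $f$ as a composition $X = X_0 \dashrightarrow X_1 \dashrightarrow \cdots \dashrightarrow X_n = Y$ of divisorial contractions and flips over $S$, and set $\Delta_i := (f_i \circ \cdots \circ f_1)_*\Delta$ and $A_i := (f_i \circ \cdots \circ f_1)_*A$. The base case $n = 0$ is trivial, so it suffices to produce a quasi-isomorphism $C^\bullet(\Delta_i^{=1}, a, A) \to C^\bullet(\Delta_{i+1}^{=1}, a, A)$ for each $0 \le a \le k$ and then compose.

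First I would verify that the hypotheses of Lemma \ref{lem:Delta0} persist at every step, i.e.\ that $A_i$ is a numerically $g$-trivial effective divisor with support equal to $\Delta_i^{=1}$. Numerical triviality over $S$ is preserved by the projection formula, using that each MMP step is an isomorphism in codimension one and that the push-forward of a curve class is well-defined. The equality of supports is preserved because the components of $A$ (and of $\Delta^{=1}$) that get contracted are precisely the same, and a component of $\Delta_i^{=1}$ is exactly the image of a non-contracted component of $\Delta^{=1}$, where it acquires coefficient one in $\Delta_i$.

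With the hypothesis in force, Lemma \ref{lem:Delta0} gives a dichotomy for each step: either (a) $\mathrm{Ex}(f_i)$ contains no lc center of $(X_i, \Delta_i)$, or (b) there is a prime divisor $\Delta_{i,0} \subset \Delta_i^{=1}$ with $\Delta_{i,0} \cdot R_i > 0$. In case (b), Lemma \ref{lem:runningMMP} applies directly (its hypotheses $\textbf{R}(\dim X - 1, k)$ and $\textbf{F}(\dim X - 1, k)$ being implicit in the present corollary, inherited from that lemma) and produces the desired quasi-isomorphism. In case (a), no irreducible component of any intersection $\Delta_{j_1} \cap \cdots \cap \Delta_{j_s}$ with components in $\Delta_i^{=1}$ is contained in $\mathrm{Ex}(f_i)$, since every such component is an lc center of the dlt pair by Theorem \ref{thm:Fujino}. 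Passing to a common log resolution $W$ of $X_i$ and $X_{i+1}$, the pairs $(X_i, \Delta_i)$ and $(X_{i+1}, \Delta_{i+1})$ are crepant birationally equivalent in the sense of Definition \ref{def:crepant}, and Lemma \ref{lem:crepant} produces the quasi-isomorphism.

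Composing across the $n$ MMP steps gives the first assertion, and the induced isomorphisms on $KH_a$ for $0 \le a \le k$ follow from the spectral sequence computing Kato homology from the double complex (property P4/P5). The main technical point I expect will be case (a) for flips: because $f_i$ is only a birational map, one cannot directly push forward cycles between strata. Routing this through the common resolution and applying the crepant invariance of Lemma \ref{lem:crepant} is what unifies the flip and divisorial-contraction cases, and it is also where the implicit resolution hypothesis $\textbf{R}(\dim X - 1, k)$ is genuinely used.
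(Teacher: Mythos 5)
Your overall strategy — inducting on MMP steps, verifying that the numerically trivial effective divisor $A$ pushes forward compatibly so that the hypothesis of Lemma~\ref{lem:Delta0} persists at each stage, applying the resulting dichotomy, and composing quasi-isomorphisms across the steps — is exactly what the paper intends by its terse statement ``combine Lemma~\ref{lem:runningMMP} and Lemma~\ref{lem:Delta0}.'' In case~(b) you apply Lemma~\ref{lem:runningMMP} directly, which is right, and your observation that $\textbf{R}(\dim X-1,k)$ and $\textbf{F}(\dim X-1,k)$ are tacit hypotheses of the corollary is also correct.

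Your treatment of case~(a), however, contains a genuine error. The pairs $(X_i,\Delta_i)$ and $(X_{i+1},\Delta_{i+1})$ are \emph{not} crepant birationally equivalent, so Lemma~\ref{lem:crepant} does not apply. Since $f_i$ is a $(K_{X_i}+\Delta_i)$-negative divisorial contraction or flip, the discrepancy of every divisor with center contained in $\mathrm{Ex}(f_i^{-1})$ strictly increases from $(X_i,\Delta_i)$ to $(X_{i+1},\Delta_{i+1})$; equivalently, on a common resolution $p\colon W \to X_i$, $q\colon W \to X_{i+1}$ one has $p^*(K_{X_i}+\Delta_i) - q^*(K_{X_{i+1}}+\Delta_{i+1})$ effective and nonzero. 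Definition~\ref{def:crepant} requires this difference to vanish, so the crepant hypothesis fails precisely because you are running a $(K+\Delta)$-MMP. The correct argument in case~(a) is both different and simpler: since $\mathrm{Ex}(f_i)$ contains no lc center, every lc center $Z$ of $(X_i,\Delta_i)$ maps to the corresponding lc center $Z'$ of $(X_{i+1},\Delta_{i+1})$ via a birational map (a morphism in the divisorial case, a small birational map in the flip case), and these lc centers are in bijection. Birational invariance (property P6), if necessary routed through the strict transform in the common resolution $W$, gives $KH_a(Z)\cong KH_a(Z')$ for all $a$, compatibly with the differentials, so $C^\bullet(\Delta_i^{=1},a,A)$ and $C^\bullet(\Delta_{i+1}^{=1},a,A)$ are term-by-term isomorphic — an isomorphism of complexes, not merely a quasi-isomorphism, and no appeal to Lemma~\ref{lem:crepant} is needed.
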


The key technical theorem is the following.
\begin{thm}\label{thm:dlt}
Let $(X, \Delta)$ be a $\QQ$-factorial quasi-projective dlt pair, and let $f: Y \to X$ be a projective log resolution. Assume that the properties $\textbf{R}(\dim X-1, k)$ and $\textbf{F}(\dim X-1, k)$ hold. Let $E$ be the reduced part of the divisor $f^{-1}(\Delta^{=1})$.
Then we have a quasi-isomorphism of the complex in $E^1$-page
\[
C^{\bullet}(E, a, A) \rightarrow C^\bullet(\Delta^{=1}, a, A), 0 \leq a \leq k,
\]
As a consequence, in the $E^r (r\geq 2)$ page of the spectral sequence computing $KH_a(\Delta^{=1})$ and $KH_a({E})$, we have isomorphisms
\[
E^r_{p, q}(E)\cong E^r_{p, q}({\Delta}^{=1}), 0 \leq q \leq k-r+2,
\]
\[
E^r_{p, q}(E)\cong E^r_{p, q}({\Delta}^{=1}), 0 \leq q \leq k, 0 \leq p \leq r-2
\]
and a commutative diagram for $0 \leq q \leq k-r+1$
\[
\begin{CD}
 E^r_{p, q}(E) @>d_r>> E^r_{p-r, q+r-1}(E)\\
 @V\cong VV @V\cong VV\\
 E^r_{p, q}({\Delta}^{=1}) @>d_r>> E^r_{p-r, q+r-1}({\Delta}^{=1}), \\
 \end{CD}
\]
Furthermore, we have an isomorphism
\[
KH_a({E}, A) \cong KH_a ({\Delta}^{=1}, A), 0 \leq a \leq k,
\] and a surjection $KH_{k+1}(E, A) \to KH_{k+1}(\Delta^{=1}, A)$.
\end{thm}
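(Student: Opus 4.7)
The strategy is to transport the identification $C^\bullet(E, a, A) \sim C^\bullet(\Delta^{=1}, a, A)$ along a $(K_Y+D_Y)$-MMP over $X$, using Corollary \ref{cor:qis_mmp} step by step and Lemma \ref{lem:crepant} at the endpoint. Set $D_Y := f_*^{-1}(\Delta) + \sum_{f(E_i)\subset \Delta^{=1}} E_i + \sum_{f(E_j)\not\subset \Delta^{=1}} c_j E_j$, where each $c_j \in [0,1)$ is chosen slightly above $-a(E_j, X, \Delta)$, which is $< 1$ by the dlt discrepancy bound. This makes $(Y, D_Y)$ a $\QQ$-factorial dlt pair with $D_Y^{=1} = E$, and arranges $F := K_Y + D_Y - f^*(K_X+\Delta)$ to be an effective $f$-exceptional $\QQ$-divisor. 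The divisor $A := f^*(\Delta^{=1})$ is effective, numerically $f$-trivial, with support exactly $E = D_Y^{=1}$, so the hypothesis of Lemma \ref{lem:Delta0} is satisfied.

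I would run a $(K_Y+D_Y)$-MMP over $X$ with scaling. Termination in characteristic zero is standard here because $K_Y + D_Y$ is $f$-pseudo-effective, $(Y,D_Y)$ is dlt, and the relative log canonical model $(X,\Delta)$ already exists. Each elementary step is a divisorial contraction or flip of a $(K_Y+D_Y)$-negative extremal ray $R$. By Lemma \ref{lem:Delta0}, either $R$ contracts no lc center (so $C^\bullet(D_Y^{=1}, a, A)$ is unchanged by construction) or there is a prime divisor $\Delta_0 \subset D_Y^{=1}$ with $\Delta_0 \cdot R > 0$; in the latter case, Lemma \ref{lem:runningMMP}, together with the inductive hypotheses $\textbf{R}(\dim X - 1, k)$ and $\textbf{F}(\dim X - 1, k)$, produces a quasi-isomorphism of $E^1$-complexes in rows $0\leq a \leq k$ and an isomorphism on $KH_a$ for $a \leq k$. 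At the terminus $(Y', D_{Y'})$, the divisor $K_{Y'}+D_{Y'}$ is $g$-nef and $\QQ$-equivalent to $g^*(K_X+\Delta)$ plus an effective $g$-exceptional divisor; the negativity lemma forces this to vanish, so $g$ is crepant. Then $(Y', D_{Y'})$ and $(X,\Delta)$ are crepant birational $\QQ$-factorial dlt pairs, and Lemma \ref{lem:crepant} supplies the final quasi-isomorphism $C^\bullet(D_{Y'}^{=1}, a, A) \xrightarrow{\sim} C^\bullet(\Delta^{=1}, a, A)$.

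The $E^r$-page statements follow by a standard diagram chase on the spectral sequence of P4: iso on $E^1$-rows $q \in [0,k]$ propagates to $E^r_{p,q}$ precisely when every differential involved remains in that strip, which yields exactly the stated ranges $q \leq k-r+2$ and the further range $p \leq r-2$, $q \leq k$; compatibility with $d_r$ is automatic from the functoriality of the double complex \eqref{doublecomplex}. Convergence then yields $KH_a(E,A) \cong KH_a(\Delta^{=1}, A)$ for $0 \leq a \leq k$. The remaining surjection $KH_{k+1}(E,A) \twoheadrightarrow KH_{k+1}(\Delta^{=1}, A)$ reduces to surjectivity of the edge contribution at $(p,q)=(0,k+1)$; for this I would invoke Theorem \ref{thm:Fujino}(4), which endows each irreducible component $\Delta_i \subset \Delta^{=1}$ with a klt structure, and then apply $\textbf{R}(\dim X - 1, k)$ to the resolutions induced by $f$ to obtain componentwise surjections $KH_{k+1}(\tilde\Delta_i) \twoheadrightarrow KH_{k+1}(\Delta_i)$, which assemble into the desired global surjection. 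The main obstacles I anticipate are the calibration of the coefficients $c_j$ to simultaneously preserve dlt, the identity $D_Y^{=1} = E$, and effectivity of $F$, together with the subtle edge behavior at $q=k+1$; once those are handled, the proof is essentially an iteration of the machinery already developed.
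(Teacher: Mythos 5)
Your proposal is correct and follows essentially the same route as the paper's proof: take a log resolution, equip it with a boundary whose coefficient-one part is exactly $E$ and whose difference from the log pull-back of $K_X+\Delta$ is effective and exceptional, run a relative MMP over $X$ (noting that $f^*(\Delta^{=1})$ supplies the numerically trivial divisor needed for Lemma \ref{lem:Delta0}), apply Lemma \ref{lem:runningMMP} step by step, observe via negativity that the terminus is a crepant $\QQ$-factorial dlt modification of $(X,\Delta)$, and finish with Lemma \ref{lem:crepant}; the $KH_{k+1}$-surjection is extracted from the column $p=0$ of the spectral sequence using the surjectivity clause of $\textbf{R}(\dim X-1,k)$ applied to the strict transforms (with Theorem \ref{thm:Fujino} providing the klt structure on each $\Delta_i$). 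The only cosmetic difference is the explicit choice of auxiliary coefficients: you take $c_j$ just above $\max\{0,-a(E_j,X,\Delta)\}$, whereas the paper uses $b_i=\max\{\tfrac12,\tfrac{1-a(F_i,X,\Delta)}{2}\}$; both achieve $D_Y^{=1}=E$ together with strict effectivity of $K_Y+D_Y-f^*(K_X+\Delta)$ on every exceptional divisor not carrying discrepancy $-1$, which is all that is used.
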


\begin{proof}
Let $D=f_*^{-1}(\Delta^{<1})+E +\sum b_i F_i$ where the sum is taken over all $f$-exceptional divisors $F_i$ that are not contained in the support of $E$ and 
\[
b_i=\text{max}\{\frac{1}{2}, \frac{1-a(F_i, X, \Delta)}{2}\}.
\]
We run a $(K_Y+D)$-MMP over $X$ to arrive at $g: Y \dashrightarrow X'$ over $X$.

The $\QQ$-divisor $K_Y+D-f^*(K_X+\Delta)$ is effective and its support consists precisely of all the $f$-exceptional divisors with positive log discrepancy with respect to $(X, \Delta)$. Since the push-forward of $K_Y+D-f^*(K_X+\Delta)$ to $X'$ is effective, exceptional and semiample over $X$, it must be trivial by negativity. Therefore the map $g: Y \dashrightarrow X'$ contracts all components of $K_Y+D-f^*(K_X+\Delta)$. Thus the pair $(X', \Delta'=g_* D)$ is a dlt modification of $(X, \Delta)$.

Note that the condition about existence of a relatively trivial divisor in Lemma \ref{lem:Delta0} is always satisfied in our situation. 
Thus we have an isormophism of the $E^r$-page for $E$ and $\Delta'$ in the appropriate range as desired. 
By Lemma \ref{lem:crepant}, the $E^r$ page for ${\Delta'}^{=1}$ and $\Delta^{=1}$ agrees in the same range. 
This proves the isomorphism part of the statement.

By the surjectivity part for $KH_{k+1}$ in assumption $\textbf{R}(\dim X-1, k)$, for each irreducible component $E_i$ of $E$ that maps birationally onto its image $\Delta_i$, we have a surjection $KH_{k+1}(E_i, A) \to KH_{k+1}(\Delta_i, A)$. Thus the surjectivity part follows from functoriality of the spectral sequence and a simple diagram chasing of the differentials.
\end{proof}

\begin{cor}\label{cor:iso_resolution}
Let $X, Y$ be normal, $\QQ$-factorial varieties and $p : Y \to X$ a projective, birational morphism. 
Let $\Delta$ be the union of all the exceptional divisors of $p$ and assume that $(Y, \Delta)$ is dlt. 
Let $f : Y \dashrightarrow Y_1$ be a divisorial contraction or flip corresponding to a $(K_Y + \Delta)$-negative extremal ray $R$ over $X$. Assume the resolution property $\textbf{R}(\dim X-1, k)$ and fibration property $\textbf{F}(\dim X-1, k)$ hold. 
Set $\Delta_1=f_*(\Delta)$ and denote by $B$ (resp. $B_1$) the image of $\Delta$ (resp. $\Delta_1$ in $X$).
Then $KH_a(\Delta) \cong KH_a(B) $ if and only if $KH_a(\Delta_1) \cong KH_a(B_1)$ for $0\leq a \leq k$, and $KH_{k+1}( \Delta) \to KH_{k+1}(B)$ is surjective if  $KH_{k+1}( \Delta_1) \to KH_{k+1}(B_1)$ is surjective.
\end{cor}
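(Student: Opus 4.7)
The plan is to combine the dichotomy provided by Lemma \ref{lem:contraction} with Lemma \ref{lem:runningMMP}, and then to verify by a direct intersection-theoretic argument that the image sets $B$ and $B_1$ coincide when the first branch of the dichotomy holds. First I apply Lemma \ref{lem:contraction} to $f:Y\dashrightarrow Y_1$ over $X$, using $\Delta=\mathrm{Ex}(p)$, to obtain either (a) a component $E_R\subset\Delta$ with $E_R\cdot R>0$, or (b) a contracted divisor $E_f\subset\Delta$ such that $p_1:Y_1\to X$ is a local isomorphism at the generic point of $f(E_f)$.

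In case (a), I set $\Delta_0:=E_R$ and invoke Lemma \ref{lem:runningMMP} with $S=X$, which yields isomorphisms $KH_a(\Delta)\cong KH_a(\Delta_1)$ for $0\leq a\leq k$ compatible with the pushforward to $X$, together with a parallel surjection at level $k+1$ (extractable by rerunning the proof of Lemma \ref{lem:runningMMP} together with the $KH_{k+1}$-surjectivity parts of $\textbf{R}$ and $\textbf{F}$, in the same spirit as in the proof of Theorem \ref{thm:dlt}). It remains to show $B=B_1$. If $f$ is a flip this is immediate; if $f$ divisorially contracts some $E_f$, then every curve $C$ with $[C]\in R$ lies in $E_f$ and meets $E_R$ since $E_R\cdot C>0$, whence $E_R\cap E_f$ surjects onto $f(E_f)$ under $f|_{E_f}$. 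Pushing forward yields $p(E_f)=p_1(f(E_f))\subseteq p(E_R)\subseteq B_1$, and combined with $B_1\subseteq B$ this gives $B=B_1$. The commutative square
\[
\begin{CD} KH_a(\Delta) @>>> KH_a(B)\\ @V\cong VV @| \\ KH_a(\Delta_1) @>>> KH_a(B_1) \end{CD}
\]
then delivers both the iff statement and the surjectivity transfer.

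The main obstacle is case (b), where Lemma \ref{lem:runningMMP} does not apply and $B$ may strictly contain $B_1$. My approach is to exploit that the failure of case (a) forces $\Delta_i\cdot R=0$ for every $i\neq f$, so that $\Delta_i\cap E_f$ cannot dominate $f(E_f)$ under $f|_{E_f}$ (otherwise some general fiber of this map would witness $\Delta_i\cdot R>0$). Consequently the general fiber of the rationally connected fibration $f|_{E_f}:E_f\to f(E_f)$, whose dlt-Fano structure is supplied by Theorem \ref{thm:Fujino} and Proposition \ref{prop:dltbase}, is disjoint from $\Delta-E_f$; combined with the local isomorphism of $p_1$ near the generic point of $f(E_f)$, this allows me to identify the Kato-homological contribution of the stratum $E_f\to p(E_f)$ in the spectral sequence of Property P4 computing $KH_a(\Delta)\to KH_a(B)$ with the trivial contribution of $p(E_f)\subset X$ in the corresponding sequence for $\Delta_1\to B_1$, using $\textbf{F}(\dim X-1,k)$ applied to $f|_{E_f}$ and $\textbf{R}(\dim X-1,k)$ applied to any residual birational modification. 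The delicate step, where I expect most of the technical work to lie, is organizing the resulting double-complex diagram chase so that the lower-codimension strata $\Delta_i\cap E_f$ are correctly matched across the two spectral sequences.
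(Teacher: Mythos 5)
Your overall skeleton matches the paper: both proofs open by splitting into the two branches of Lemma \ref{lem:contraction}, and your intersection-theoretic verification that $B=B_1$ in case (a) is correct (the paper states this without proof), as is your observation in case (b) that the general fiber of $E_f\to f(E_f)$ is a dlt Fano disjoint from $\Delta - E_f$. But each case then diverges from the paper in a way that leaves a real gap.

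In case (a), Lemma \ref{lem:runningMMP} does give you $KH_a(\Delta)\cong KH_a(\Delta_1)$ for $0\leq a\leq k$ compatibly over $X$, so the ``iff'' statement is fine. The problem is the level-$(k+1)$ surjectivity. Your plan is to produce a direct surjection between $KH_{k+1}(\Delta)$ and $KH_{k+1}(\Delta_1)$ by ``rerunning the proof of Lemma \ref{lem:runningMMP}'', but when $f$ is a flip there is no morphism between $\Delta$ and $\Delta_1$ in either direction, hence no push-forward map along which to attempt this; and Lemma \ref{lem:runningMMP} itself says nothing at level $k+1$. The paper instead passes to a common log resolution $g:Z\to Y$, $g_1:Z\to Y_1$, observes that $g^{-1}(\Delta)$ and $g_1^{-1}(\Delta_1)$ share the same support $E$ (the union of all $Z\to X$ exceptional divisors), and applies Theorem \ref{thm:dlt} to both sides. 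The $KH_{k+1}$-surjectivity built into Theorem \ref{thm:dlt} produces $KH_{k+1}(E)\twoheadrightarrow KH_{k+1}(\Delta)$ and $KH_{k+1}(E)\twoheadrightarrow KH_{k+1}(\Delta_1)$, and the desired transfer then falls out of the commutative square over $B=B_1$. You gesture at this (``in the same spirit as Theorem \ref{thm:dlt}''), but the route you actually write down does not close.

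In case (b), the machinery you propose does not apply: Property P4 is stated only for simple normal crossing varieties, and neither $B$ nor $B_1$ (images of exceptional divisors in the singular $X$) is snc in general, so there is no ``corresponding spectral sequence for $\Delta_1\to B_1$'' to match against. The paper avoids this entirely with a localization argument: it notes that $\Delta - f^{-1}(\Delta_1)\to B-B_1$ is a rationally connected fibration of relative dimension $\leq\dim X - 1$ (using Proposition \ref{prop:dltbase} and the local isomorphism of $p_1$ near the generic point of $f(E_f)$), applies $\textbf{F}(\dim X-1,k)$ to get $KH_a(\Delta-f^{-1}(\Delta_1))\cong KH_a(B-B_1)$ for $a\leq k$ plus surjectivity at $k+1$, and then uses the localization long exact sequence (Property P2) for the closed/open decompositions of $\Delta$ and of $B$. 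That argument is both shorter and correct, whereas the double-complex diagram chase you anticipate as ``the delicate step'' is built on a tool that is not available here.
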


\begin{proof}
By Lemma \ref{lem:contraction}, we only need to consider two cases.

Case 1, either $f$ is a flip, or $f$ contracts a divisor $D \subset \Delta(=Ex(p))$ and $Y_1 \to X$ is \textbf{NOT} a local isomorphism at the generic point of $f(D)$. 
Then there is a divisor $\Delta_0 \subset \Delta(=Ex(p))$ such that $(\Delta_0 \cdot R) > 0$.  Thus in this case $B=B_1$
We may choose a common log resolution $g: Z\to Y$ and $g_1: Z \to Y_1$. 
Note that $g^{-1}(\Delta)$ and $g_1^{-1}(\Delta_1)$ have the same support, since it is the union of all the exceptional divisors of $Z \to X$. Call it $E$. 
By Theorem \ref{thm:dlt}, we have isomorphisms $KH_a(E) \cong KH_a(\Delta) \cong KH_a(\Delta_1), 0 \leq a \leq k,$ and surjections $KH_{k+1} (E) \to KH_{k+1}(\Delta)$ and $KH_{k+1} (E) \to KH_{k+1}(\Delta_1)$. The statement then follows from the commutative diagram
\[
\begin{CD}
KH_{a} (E) @>>> KH_{a}(\Delta)\\
@VVV @VVV\\
KH_{a} (\Delta_1) @>>> KH_{a}(B).\\
\end{CD}
\]

Case 2, $f$ contracts a divisor $D \subset \Delta(=Ex(p))$ and $Y_1 \to X$ is a local isomorphism at the generic point of $f(D)$.
Note that by Proposition \ref{prop:dltbase}, the image of $D$ has dlt singularities.
In this case, $Y_1 \to X$ is not an isomorphism along $\Delta_1=\Delta-D$. 
By $\textbf{F}(\dim X-1, k)$, we have
\[
KH_a(\Delta-f^{-1}(\Delta_1)) \cong KH_a(B-B_1), 0 \leq a \leq k,
\]
and a surjection $KH_{k+1}(\Delta-f^{-1}(\Delta_1)) \to KH_{k+1}(B-B_1)$.
Then the assertion follows from the localization sequence for Kato homology.
\end{proof}

\section{Proof of main theorems}\label{sec-main}

We start with the following.
\begin{thm}\label{thm:induction}
We have the following implications.
\begin{enumerate}

\item\label{6.1}
$\textbf{R}(n, k) +\textbf{F}(n, k)\implies \textbf{R}(n+1, k)$
\item\label{6.2}
$\textbf{R}(n, k)+\textbf{F}(n, k) \implies \textbf{D}(n+1, k)$

\item \label{6.3}
$\textbf{RC}(n, k) \implies \textbf{S}(n, k)$ for $k \leq 2$.
\item \label{6.4}
$\textbf{D}(n, k)+\textbf{S}(n, k)\implies \textbf{F}(n, k)$

\end{enumerate}
\end{thm}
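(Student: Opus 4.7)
The plan is to handle the four implications in two families: parts (3) and (4) reduce to comparison arguments via localization sequences and a spectral-sequence-level analysis, while parts (1) and (2) are direct applications of the MMP machinery built up in Section \ref{sec-MMP-KH}. I would begin with (4) as it is the most formal. Given a rationally connected fibration $\pi : X \to Y$ between smooth quasi-projective varieties with $\dim X \le n$, first choose a reduced SNC divisor $D \subset Y$ containing the discriminant of $\pi$ such that $E := (\pi^{-1}(D))_{\mathrm{red}}$ is also SNC (achievable by log resolution and further blow-ups). Over the complements $U := Y\setminus D$ and $V := X \setminus E$, the morphism restricts to a smooth rationally connected fibration, so $\textbf{S}(n,k)$ gives $KH_a(V) \cong KH_a(U)$ for $0 \le a \le k$ with surjectivity at $k+1$, while $\textbf{D}(n,k)$ yields the analogous comparison $KH_a(E) \cong KH_a(D)$. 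Feeding these into the localization sequences for $(X,E)$ and $(Y,D)$ and applying the five-lemma then produces $KH_a(X) \cong KH_a(Y)$ together with surjectivity in degree $k+1$, which is exactly $\textbf{F}(n,k)$.

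For (3), the approach is to use the niveau spectral sequence of \cite{BlochOgus}. A smooth projective morphism $f: X \to Y$ with rationally connected fibers has the property that, over each point $y \in Y_{(i)}$, the geometric fiber is a smooth projective RC variety of dimension at most $n$ over (an algebraic closure of) $\kappa(y)$. Applying $\textbf{RC}(n,k)$ fiberwise shows that the induced map of $E^1$-rows of the niveau spectral sequence is an isomorphism in the relevant range, and a diagram chase (using that $KH_\bullet$ sits as the $q=0$ row of this spectral sequence) lifts this to the desired comparison of $KH_a$. The restriction $k \le 2$ keeps the filtration under control with only fiberwise vanishing as input; handling higher $k$ would require Serre-spectral-sequence-type information about mixed contributions that $\textbf{RC}$ alone does not supply.

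For (1) and (2), the strategy is to reduce to the comparison statements already proved: Theorem \ref{thm:dlt}, Corollary \ref{cor:iso_resolution}, and Corollary \ref{cor:qis_mmp}, each of which takes $\textbf{R}(n,k) + \textbf{F}(n,k)$ as input and tracks Kato homology through divisorial contractions and flips. For (1), given a klt pair $(X,\Delta)$ of dimension $n+1$ and a resolution $p: X' \to X$ with exceptional locus $E$ and image $B$, the blow-up exact sequence P3 reduces the claim to the statement $KH_a(E) \cong KH_a(B)$ with surjectivity at $k+1$; I would pass to a dlt modification (Proposition \ref{prop:dltklt} converts the perturbation step into a klt-to-dlt comparison) and then run an MMP relative to $X$, invoking Corollary \ref{cor:iso_resolution} at each step. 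Since every lc center that occurs has dimension at most $n$, the hypotheses $\textbf{R}(n,k)$ and $\textbf{F}(n,k)$ apply throughout. For (2), I would view $(X,E)$ as a dlt pair (after possibly passing to a log resolution and invoking Theorem \ref{thm:dlt} to preserve the Kato homology of $E$) and run an $(X,E)$-MMP over $Y$, using Corollary \ref{cor:qis_mmp} to propagate the quasi-isomorphism of the complex $C^\bullet(E^{=1}, a, A)$; on the terminal relative model over $Y$, the divisor $E$ will identify stratum by stratum with $D$ along the generic point of each stratum, giving the desired comparison $KH_a(E) \cong KH_a(D)$.

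The main obstacle will be part (3). Parts (1), (2), and (4) are essentially bookkeeping on top of machinery that is already in place — one has only to verify that the induction carries through cleanly and that the dimension counts permit the recursive use of $\textbf{R}(n,k)$ and $\textbf{F}(n,k)$. By contrast, (3) requires a careful argument about how Kato homology of the total space of a smooth projective morphism is controlled by that of its base together with fiberwise input, and it is precisely the constraint $k \le 2$ that makes the pointwise application of $\textbf{RC}(n,k)$ at the $E^1$-level sufficient to conclude.
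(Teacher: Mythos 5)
Your treatment of parts (2) and (4) is essentially the paper's: (4) is the formal localization/five-lemma argument, and for (2) the paper does exactly what you sketch (reduce to a Mori fiber space contraction of relative Picard number one via \textbf{F}, then match strata using Koll\'ar--Shokurov connectedness and the Xu--Hogadi lemma), so those are fine modulo filling in routine details.

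For part (1) your sketch covers only the $\QQ$-factorial case. The paper's proof is the same as yours there (log-resolve, run the $(K_{X_0}+p_*^{-1}\Delta+E)$-MMP over $X$, which contracts all exceptional divisors and terminates at $X$ by negativity and $\QQ$-factoriality, then apply Corollary~\ref{cor:iso_resolution}). But when $X$ is not $\QQ$-factorial the exceptional locus of $X_n\to X$ need not be a divisor and the MMP need not terminate at $X$ itself, so this argument breaks down. The paper handles this by applying Lemma~\ref{lem:kollar_comp} (Koll\'ar components) to produce a $\QQ$-factorial partial resolution with controlled exceptional divisors over the non-$\QQ$-factorial locus, combined with noetherian induction. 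You should add this case, or at least note that the $\QQ$-factorial reduction is nontrivial.

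The real gap is part (3), where your approach diverges from the paper's and I don't believe it works as stated. You propose to apply $\textbf{RC}(n,k)$ ``fiberwise'' at the level of the $E^1$ page of the niveau spectral sequence. There are two problems. First, $\textbf{RC}(n,k)$ is a statement about smooth projective rationally connected varieties over an \emph{algebraically closed} field, whereas the $E^1$ terms $\oplus_{x\in X_{(a)}}H_{a+b}(x,A)$ are built from function fields of subvarieties, which are very far from algebraically closed; there is no direct sense in which ``the fiber over $y\in Y_{(i)}$'' appears there, since the niveau spectral sequence is not a fibration spectral sequence and the pushforward $X_{(a)}\to Y_{(\leq a)}$ mixes dimensions. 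Second, your rationale for the $k\leq 2$ bound is misplaced. In the paper, the constraint comes from the Bloch--Ogus exact sequence
\[
H_3(X,A)\to KH_3(X,A)\to (\mathrm{CH}_1(X)/\mathrm{alg})\otimes A\to H_2(X,A)\to KH_2(X,A)\to 0,
\]
together with $KH_a\cong H_a$ for $a=0,1$, which reduces control of $KH_{\leq 2}$ to control of $H_{\leq 2}$ and of $\mathrm{CH}_1$. The paper's actual argument is: induct on $\dim Y$ (the base case $\dim Y=0$ \emph{is} $\textbf{RC}(n,k)$), reduce by localization to an affine base, then use the Leray spectral sequence together with Artin vanishing and the vanishing $H_1(X_y,A)=0$ for RC fibers to get $H_i(X,A)\cong H_i(Y,A)$ for $i\leq 2$ and surjectivity at $i=3$, and finally invoke Graber--Harris--Starr to get $\mathrm{CH}_1(X)\twoheadrightarrow\mathrm{CH}_1(Y)$ so that a diagram chase closes the argument. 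The GHS input is indispensable and is global in nature; no amount of fiberwise vanishing supplies it. You would need to replace your sketch of (3) with something along these lines.
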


\begin{rem}
It is an intriguing question whether $\textbf{S}(n, k)$ is true for other $k$. 
For $k=3$, this amounts to compare the Griffiths group of one cycles on the total space of a smooth rationally connected fibration and that of the base. 
When the base is a curve, this is equivalent to proving that the degree $0$ part of the Chow group of zero cycles of the generic fiber is trivial (assuming $\textbf{RC}(n, 3)$).
In general, proving $\textbf{S}(n, k)$ requires proving some rather non-trivial statements comparing the (higher) Chow group of one cycles of the total space and the base of a rationally connected fibration.
\end{rem}

\begin{proof}
[Proof of Theorem \ref{thm:induction}, (\ref{6.1})]

We first consider the case when $X$ is $\QQ$-factorial.
Let us take a log resolution of $(X, \Delta)$, $p: X_0 \to X$ and denote by $E$ the reduced part of the exceptional divisor.  We run the MMP for $K_{X_0}+p_*^{-1}(\Delta)+E$ over $X$:
\[
(X_0, \Delta_0={p}_*^{-1}(\Delta)+E)\dashrightarrow (X_1, \Delta_1) \dashrightarrow \ldots \dashrightarrow (X_n, \Delta_n).
\]
Since $(X, \Delta)$ is klt, we have 
\[
K_{X_0}+{p}_*^{-1}(\Delta)+E=p^*(K_X+\Delta) +\sum (a(X, E_i)+1)E_i, a(X, E_i)+1>0,
\]
where the summation is taken over all the exceptional divisors.
By the negativity lemma, the end result of the MMP has to contract all the exceptional divisors. Since $X$ is $\QQ$-factorial, the exceptional locus of the morphism $X_n \to X$ has to be of codimension $1$. Therefore, $X_n \cong X$ (and $X_{n-1} \to X_n$ is a divisorial contraction).  Then we can apply Corollary \ref{cor:iso_resolution} to finish the proof.

In general, if $X$ is not $\QQ$-factorial, there is a closed subset $Z$ such that $U=X-Z$ satisfies the resolution property (e.g. take $Z$ to be the singular locus). Assume that $Z$ is non-empty. We apply Lemma \ref{lem:kollar_comp} to each irreducible component of $Z$. This produces a $\QQ$-factorial normal variety $Y$ and a birational projective morphism $f: Y \to X-W$ for some closed algebraic subset  $W \subset Z$ with one exceptional divisor $E_i$ for each irreducible component of $Z$. Up to enlarging $W$, we may assume the image of $E_i$ is dlt (even smooth). Then one can apply the localization sequence and properties $\textbf{R}(\dim X-1, k), \textbf{F}(\dim X-1, k)$ to conclude that $X-W$ satisfies  the resolution property. The statement then follows by noetherian induction.
\end{proof}

\begin{proof}[Proof of Theorem \ref{thm:induction}, (\ref{6.2})]

For the implication $\textbf{R}(n, k)+\textbf{F}(n, k) \implies \textbf{D}(n+1, k)$, 
first note that by the localization sequence, 
if $(X, E)$ and $(X', E')$ are smooth projective varieties with simple normal crossing boundary divisors such that there is a birational proper morphism $X-E \to X'-E'$, 
we have $KH_a(E) \cong KH_a(E')$. 
Also note that if $(X, F) \to (Y, E)$ and $(Y, E) \to (Z, D)$ are both rationally connected fibrations satisfying the simple normal crossing condition, 
then to prove the degeneration property for $(X, F) \to (Z, D)$, 
it suffices to show that the degeneration property holds for $(X, F) \to (Y, E)$ and $(Y, E) \to (Z, D)$ separately. 
Thus we only need to show the following:

Let $p: (X, E) \to (Y, D)$ be a rationally connected fibration with simple normal crossing boundary divisors $D$ and $E$ such that $p^{-1}(D)=E$.
Assume that $\textbf{R}(n, k), \textbf{F}(n, k)$ hold, 
and that any running of MMP for $(X, E)$ over $Y$ terminates with a Fano contraction $(X', E') \to Y$ of relative Picard number one. 
Then $KH_a(E, A)\cong KH_a(D, A), 0 \leq a \leq k$ and $KH_{k+1}(E, A) \to KH_{k+1}(D, A)$ is surjective.

By Corollary \ref{cor:qis_mmp}, we know that $KH_a(E) \cong KH_a(E'), 0 \leq a \leq k$. 
Since the relative Picard number of $X'/Y$ is $1$, each irreducible component $E_i'$ of $E'$ is the inverse image of an irreducible component $D_i$ of $D$ (\cite[Lemma 5.1, 5.2]{XuHogadi}). 
Since $\cap_{i \in I}E_i'$ and $\cap_{i \in I}D_i$ are precisely the log canonical centers of $(X', E')$ and $(Y, D)$, 
there is a one-to-one correspondence between the irreducible components of $\cap_{i \in I}E_i'$ and $\cap_{i \in I}D_i$, 
and the morphism between intersections $\cap_{i \in I}E_i' \to \cap_{i \in I} D_i$ is an algebraic fiber space (if non-empty) (\ref{thm:connected}). Moreover, the morphism $\cap_{i \in I}E_i' \to \cap_{i \in I} D_i$ has log Fano (hence rationally connected) generic fiber since it is the fiber of a $(K_{X'}+E')$-negative contraction. 
Thus by the fibration property $\textbf{F}(\dim X-1)$ and the resolution property $\textbf{R}(\dim X-1, k)$, we have
\[
KH_a(\cap_{i \in I}E_i', A) \cong KH_a(\cap_{i \in I}D_i, A), 0 \leq a \leq k,
\]
and a surjection $KH_{k+1}(\cap_{i \in I}E_i', A) \to KH_{k+1}(\cap_{i \in I}D_i, A)$.
Then the implication follows from the comparison of the spectral sequence.
\end{proof}

\begin{proof}
[Proof of Theorem \ref{thm:induction}, (\ref{6.3})]
The assertions $\textbf{RC}(n, k) \implies \textbf{S}(n, k)$ for $k \leq 2$ can be proved using induction on the dimension of the base. When the base is a point, the implication is trivial. For any positive dimensional smooth quasi-projective base $Y$, we can find a smooth ample divisor $H \subset Y$ such that the complement $U$ is affine. By induction hypothesis and the localization long exact sequence, we only need to consider the positive dimensional affine base case.  So in the following we assume that we have a rationally connected fibration $\pi : X \to Y$ over a smooth affine variety $Y$ of positive dimension. For any fiber of the rationally connected fibration $X_y$, we know that $H_1(X_y, A)=H^{2\dim X_y-1}(X_y, A)$ vanishes. Using the Leray spectral sequence for cohomology and the Artin vanishing for affine varieties, we deduce that $H_i(Y, A)\cong H_i(X, A)$ for $i=0, 1, 2$ and $H_3(X, A) \to H_3(Y, A)$ is surjective. This already shows that $KH_1(X, A) \cong KH_1(Y, A)$ and $KH_0(X, A) \cong KH_0(Y, A)$.
Recall that we have the following commuting diagram of long exact sequences:
\[
\begin{CD}
H_3(X) @>>>KH_3(X) @>>>CH_1(X) @>>> H_2(X)@>>>KH_2(X)@>>>0\\
@VVV @VVV @VVV @VV \cong V @VVV @VVV\\
H_3(Y) @>>>KH_3(Y)@ >>>CH_1(Y)@ >>> H_2(Y)@>>>KH_2(Y)@>>>0\\
\end{CD}
\]
We omit the coefficient $A$ in all the expressions above and $CH_1(X), CH_1(Y)$ are Chow groups of one cycles modulo algebraic equivalence with coefficient $A$. By the result of Graber-Harris-Starr \cite{GHS03}, $CH_1(X) \to CH_1(Y)$ is surjective. By a simple diagram chasing, we have the desired implications.
\end{proof}

\begin{proof}
[Proof of Theorem \ref{thm:induction}, (\ref{6.4})]
 
The assertion $\textbf{D}(n, k)+\textbf{S}(n, k)\implies \textbf{F}(n, k)$ follows directly from the localization sequence for Kato homology and a simple diagram chasing.
\end{proof}

Combining Theorems \ref{thm:IHC}, \ref{thm:Voisin}, \ref{thm:induction}, and Lemma \ref{lem:KatoSNC}, we have the following.
 
\begin{cor} \label{cor:IHC}
Assume all varieties are defined either over the complex numbers or an algebraically closed field of characteristic $0$. We use the homology theories in Example \ref{ex:homology}.
Let $\mcX \to (B, b)$ be a flat projetive morphism to a  pointed curve $B$. Assume that $\mcX$ is regular, that the generic fiber $X$ is rationally connected, that the fiber $\mcX_b$ is a simple normal crossing divisor, and
that one of the followings holds.
\begin{enumerate}
\item $\dim X \leq 3$.
\item $\dim X=d$ and the cycle class map $$CH_1(Y)/n \to H_2(Y, \ZZ/n\ZZ(-1))$$ is surjective for any smooth projective rationally connected variety $Y$ defined over $k$ of dimension at most $d$.
\item The Tate conjecture (Conjecture \ref{conj:Tate}) holds for all surfaces defined over a finite field.
\end{enumerate}
Then $KH_2((\mcX_b)_{\text{red}}, A)$ vanishes for $A\cong \ZZ$ (if $\mcX$ is defined over the complex numbers) or a torsion group. 
\end{cor}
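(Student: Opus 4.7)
The desired vanishing $KH_2((\mcX_b)_{\text{red}},A)=0$ is the special case of property $\textbf{D}(\dim\mcX,2)$ in which the base divisor is a single reduced point $D=\{b\}\subset B$: then $\pi^{-1}(D)=\mcX_b$ by definition, $E=(\mcX_b)_{\text{red}}$, and $\textbf{D}(\dim\mcX,2)$ equates $KH_2(E,A)$ with $KH_2(\{b\},A)$, which vanishes because the Kato homology of a closed point is concentrated in degree zero. Since $\mcX$ is regular and we work in characteristic zero, $\mcX$ is smooth, so $\pi:\mcX\to B$ is an honest rationally connected fibration between smooth varieties and the hypothesis of $\textbf{D}$ is met. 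The whole proof is therefore reduced to establishing $\textbf{D}(d+1,2)$, where $d=\dim X$.

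To prove $\textbf{D}(d+1,2)$, I invoke Theorem \ref{thm:induction}(\ref{6.2}), reducing the task to $\textbf{R}(d,2)$ together with $\textbf{F}(d,2)$. These in turn I deduce by induction on $n$, proving simultaneously that $\textbf{R}(n,2)$, $\textbf{F}(n,2)$, $\textbf{D}(n,2)$, $\textbf{S}(n,2)$ all hold for $n$ in the appropriate range. The inductive step chains the four implications of Theorem \ref{thm:induction}: from $\textbf{R}(n-1,2)+\textbf{F}(n-1,2)$, part (\ref{6.1}) yields $\textbf{R}(n,2)$ and part (\ref{6.2}) yields $\textbf{D}(n,2)$; combining the hypothesis $\textbf{RC}(n,2)$ with part (\ref{6.3}) yields $\textbf{S}(n,2)$; and part (\ref{6.4}) then yields $\textbf{F}(n,2)$, closing the induction. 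The base cases $n\le 2$ are straightforward: klt resolution in dimension one is trivial, and for smooth projective rational curves and rational surfaces the groups $KH_1$ and $KH_2$ vanish because the relevant homology is either zero or generated by algebraic classes.

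The remaining input is $\textbf{RC}(n,2)$ in the relevant range under each of the three hypotheses. Since a smooth projective rationally connected variety in characteristic zero is simply connected, one has $KH_1(X,A)\cong H_1(X,A)=0$ automatically, so only $KH_2(X,A)=0$ is at stake; by the exact sequence in Section \ref{sec-IHC} this is precisely surjectivity of the cycle class map for one cycles. Under hypothesis (1), the range $n\le 3$ is covered by the trivial cases $n\le 2$ together with Voisin's Theorem \ref{thm:IHC}(\ref{6.1}); under hypothesis (2), the required surjectivity is the assumption itself; under hypothesis (3), Voisin's Theorem \ref{thm:Voisin} converts the Tate conjecture for divisor classes on surfaces over finite fields into $\textbf{RC}(n,2)$ in every dimension.

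The argument is essentially a mechanical assembly of implications already established in the paper, so no new geometric input beyond what is cited is needed. The genuine difficulty, in spirit, is entirely absorbed into Theorem \ref{thm:induction}, where the MMP analysis of Kato homology in Section \ref{sec-MMP-KH} does all the heavy lifting; once that theorem is available, the only care required here is to run the induction and to pull $\textbf{RC}(n,2)$ out of the correct hypothesis.
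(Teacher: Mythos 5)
Your proposal is correct and follows the same route the paper intends: the paper gives only the terse citation ``Combining Theorems~\ref{thm:IHC}, \ref{thm:Voisin}, \ref{thm:induction}, and Lemma~\ref{lem:KatoSNC}, we have the following,'' and your write-up is precisely the unpacking of that citation. You correctly recognize the conclusion as the statement $\textbf{DC}(d+1,2)$, reduce it via Theorem~\ref{thm:induction}(2) to $\textbf{R}(d,2)+\textbf{F}(d,2)$, and then run the four implications of Theorem~\ref{thm:induction} in a simultaneous induction on the dimension, feeding in $\textbf{RC}(n,2)$ from Theorem~\ref{thm:IHC} (hypothesis (1)), from the hypothesis directly (hypothesis (2)), or from Theorem~\ref{thm:Voisin} (hypothesis (3)). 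The only cosmetic slip is the internal cross-reference ``\ref{thm:IHC}(\ref{6.1})'', where the parenthetical label comes from Theorem~\ref{thm:induction} rather than from Theorem~\ref{thm:IHC}; what is meant (Voisin's result for threefolds, part~(1) of Theorem~\ref{thm:IHC}) is clear. Note also that Lemma~\ref{lem:KatoSNC}, which the paper lists among the ingredients, is not actually needed for the conclusion $KH_2((\mcX_b)_{\mathrm{red}},A)=0$ itself but only for its downstream application in the proof of the main theorem, so your omitting it is harmless.
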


Finally we prove the main theorem of the article.
\begin{thm} Let $k$ be an algebraically closed field of characteristic $0$, and let $X$ be a smooth projective rationally connected variety defined over $k\Semr{t}$. Assume that one of the followings holds.
\begin{enumerate}
\item $\dim X \leq 3$.
\item $\dim X=d$ and the cycle class map $$CH_1(Y)/n \to H^{2d-2}_{\text{\'et}}(Y, \ZZ/n\ZZ(d-1))$$ is surjective for any smooth projective rationally connected variety $Y$ defined over $k$ of dimension at most $d$.
\item The Tate conjecture (Conjecture \ref{conj:Tate}) holds for all surfaces defined over a finite field.
\end{enumerate}
Then the degree map induces an isomorphism $$\deg: CH_0(X)\cong \ZZ.$$
\end{thm}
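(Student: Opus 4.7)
The plan is to apply the criterion of Saito--Sato (refined by Esnault--Wittenberg) that expresses the cycle class map on zero cycles of a smooth projective variety over a henselian discrete valuation ring with algebraically closed residue field in terms of the cycle class map on one cycles on the special fiber of a regular simple normal crossing model. Since Theorem \ref{thm:IHC_degeneration} has already delivered the surjectivity on the special fiber, the deduction of Theorem \ref{thm:Laurant} is essentially formal.

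First I would choose a projective regular model $\mcX \to \SP k\Sem{t}$ of $X$ whose central fiber $\mcX_0$ is a simple normal crossing divisor. Such a model exists because $\mathrm{char}(k)=0$: start from the scheme-theoretic closure of $X$ in some projective embedding, then apply Hironaka's resolution of singularities followed by embedded resolution of the central fiber. Applying Theorem \ref{thm:IHC_degeneration} to $\mcX$ under whichever of the three hypotheses holds, one gets that for every integer $n\geq 1$ the cycle class map
\[
CH_1((\mcX_0)_{\text{red}})/n \to H_2((\mcX_0)_{\text{red}}, \ZZ/n\ZZ(-1))
\]
is surjective.

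Next I would feed this into the formula of Saito--Sato, as made explicit in \cite[Theorems 2.1--2.2]{WittenbergEsnault_0_cycle}, which expresses the cokernel and kernel of $CH_0(X)/n \to H^{2\dim X}_{\text{\'et}}(X,\ZZ/n\ZZ(\dim X))$ in terms of the cokernel of the cycle class map on one cycles of $(\mcX_0)_{\text{red}}$. The surjectivity just established forces $CH_0(X)/n$ to inject into \'etale cohomology via the degree map, with image identified with $\ZZ/n\ZZ$. In particular, the $n$-torsion of $\ker(\deg)$ vanishes for every $n\geq 1$. Since $\ker(\deg)$ is a torsion group of bounded exponent, a classical consequence of the Bloch--Srinivas decomposition of the diagonal for rationally connected varieties, it must be zero. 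Surjectivity of $\deg$ follows from the existence of a $k\Semr{t}$-rational point on $X$ (rationally connected varieties over the fraction field of a henselian DVR with algebraically closed residue field of characteristic zero always admit a rational point, by a Graber--Harris--Starr type argument applied to $\mcX$). Hence $\deg:CH_0(X)\to\ZZ$ is an isomorphism.

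The main obstacle of the entire argument is therefore entirely absorbed into Theorem \ref{thm:IHC_degeneration}, whose crux in turn is the vanishing $KH_2((\mcX_0)_{\text{red}}, \ZZ/n\ZZ)=0$. By Lemma \ref{lem:KatoSNC} this vanishing is what upgrades the rational algebraicity of classes in $H_2$ (known by Bloch--Srinivas) to integral algebraicity on the simple normal crossing special fiber. That vanishing is the output of the degeneration property $\textbf{D}(\dim X+1,2)$ coupled with the integral Hodge/Tate results recorded in Theorem \ref{thm:IHC} and Theorem \ref{thm:Voisin}, all assembled through the inductive bootstrap of Theorem \ref{thm:induction}. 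So in the end, all the difficulty is concentrated in the MMP analysis of Section \ref{sec-MMP-KH}, especially Lemma \ref{lem:runningMMP}; the passage from the special fiber to the generic fiber is the easy final step.
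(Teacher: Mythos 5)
Your proposal is correct and follows essentially the same route as the paper: choose a regular strictly semistable (SNC) model, invoke Saito--Sato (and the Esnault--Wittenberg description of kernel and cokernel) to transfer the problem to the cycle class map on one cycles of the reduced special fiber, note surjectivity of $\deg$ from the existence of a $k\Semr{t}$-point, and kill the bounded-exponent kernel by taking $n$ equal to the exponent. One subtlety that your account glosses over but that is essential in the paper: you cite Theorem \ref{thm:IHC_degeneration} as a black box, but the proof of that statement requires an intermediate approximation step, because the minimal model program on which the degeneration property $\textbf{D}$ rests is not available over $\SP k\Sem{t}$ itself. The paper handles this by approximating the model $\mcX \to \SP k\Sem{t}$ to arbitrarily high order by a family over a pointed affine curve (with smooth total space and SNC special fiber, unchanged after the approximation), and then applying Corollary \ref{cor:IHC}, which is the genuinely algebraic-geometric incarnation of the statement. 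Your end-of-proof summary, which attributes the crux directly to $\textbf{D}(\dim X+1, 2)$ and the Voisin/Schoen results assembled via Theorem \ref{thm:induction}, is right in spirit, but should record this algebraization step; without it there is no way to run MMP on the degeneration.
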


\begin{proof}
For a rationally connected variety $X$ over any field, by the decomposition of the diagonal principle of Bloch-Srinivas, we know that there is a positive integer $N$ such that the kernel of the degree map is $N$-torsion. 
If $X$ is defined over $k\Semr{t}$, we know the degree map is surjective since $X$ has a $k\Semr{t}$-rational point (which is a corollary of the Greenberg's approximation theorem \cite{Greenberg} and the Graber-Harris-Starr theorem \cite{GHS03} for rationally connected fibrations over a curve).
Thus it suffices to show that the mod $N$ degree map $CH_0(X)/N \to \ZZ/N \ZZ$ is injective.
Since a smooth projective rationally connected variety$X$  over an algebraically closed field of characteristic $0$ has vanishing $H^{2\dim X-1}$, using Hochschild-Serre spectral sequence, we conclude that the mod $N$ degree map can be identified with the map to the \'etale cohomology:
\[
CH_0(X)/N \to H^{2d}_{\text{\'et}}(X, \ZZ/N\ZZ(d)).
\]
We choose a model $\mcX$ of $X$ over $\SP k\Sem{t}$ with regular total space and simple normal crossing closed fiber $\mcX_0$.
By the result of Saito-Sato \cite{SaitoSato_0_cycle}, we know that there is an isomorphism
\[
CH_1(\mcX)/N  \cong H^{2d}_{\text{\'et}}(\mcX, \ZZ/N\ZZ(d)).
\]
By the following commutative diagram
\[
\begin{CD}
CH_1((\mcX_0)_{\text{red}})/N @>>> CH_1(\mcX)/N @>>> CH_0(X)/N @>>>0\\
@VVV @V\cong VV @VVV \\
H^{2d}_{(\mcX_0)_{\text{red}}}(\mcX, \ZZ/N\ZZ(d))@>>>H^{2d}(\mcX, \ZZ/N\ZZ(d)) @>>> H^{2d}(X, \ZZ/N\ZZ(d)),
\end{CD}
\]
it suffices to show that $$CH_1((\mcX_0)_{\text{red}})/N \to H^{2d}_{(\mcX_0)_{\text{red}}}(\mcX, \ZZ/N\ZZ(d))\cong H_2((\mcX_0)_{\text{red}}, \ZZ/N \ZZ(-1))$$ is surjective.

We do not know MMP over $\SP k\Sem{t}$. But we may approximate the model $\mcX \to \SP k\Sem{t}$ to arbitrary high order by a family over a pointed affine curve. In particular, we may assume the family over the affine curve has smooth total space and SNC special fiber. Then we may use Corollary \ref{cor:IHC} to finish the proof.
\end{proof}


\end{document}